\definecolor{MyDarkBlue}{rgb}{0.15,0.25,0.45}
\newcommand{\triend}{\parbox{2mm}{\hfill} \hfill\text{\hspace{0.2mm}}\hfill$\triangle$}
\newcommand{\ocend}{\parbox{2mm}{\hfill} \hfill\text{\hspace{0.2mm}}\hfill$\oslash$}
\newtheorem{theorem}{Theorem}
\newtheorem{proposition}[theorem]{Proposition}
\newtheorem{lemma}[theorem]{Lemma}
\newtheorem{corollary}[theorem]{Corollary}
\newtheorem{corollary*}{Corollary}
\newtheorem*{theorem*}{Theorem}
\newtheorem*{proposition*}{Proposition}
\newtheorem*{conjecture*}{Conjecture}
\numberwithin{equation}{section}
\numberwithin{theorem}{section}
\theoremstyle{remark}
\newtheorem{ex}[theorem]{Example}
\newenvironment{example}{\begin{ex}}{\triend\end{ex}}
\theoremstyle{remark}
\newtheorem{rem}[theorem]{Remark}
\newenvironment{remark}{\begin{rem}}{\triend\end{rem}}
\theoremstyle{definition}
\newtheorem{defin}[theorem]{Definition}
\newenvironment{definition}{\begin{defin}}{\ocend\end{defin}}
\newcommand{\fun}[1]{\mathbb{F}(#1)} 
\newcommand{\cf}{\mathbb{1}}
\newcommand{\Int}{\mathsf{Int}}
\newcommand{\calE}{\mathcal{E}}
\newcommand{\calF}{{\mathcal F}}
\newcommand{\calQ}{\mathcal{Q}}
\newcommand{\calM}{\mathcal{M}}
\newcommand{\calO}{\mathcal{O}}
\newcommand{\fraksl}{\mathfrak{sl}}
\newcommand{\hatfraksl}{\widehat{\mathfrak{sl}}}
\newcommand{\frakg}{\mathfrak{g}}
\newcommand{\F}{\mathbb{F}}
\newcommand{\K}{{\mathbb{K}}}
\newcommand{\Q}{{\mathbb{Q}}}
\newcommand{\R}{{\mathbb{R}}}
\newcommand{\Z}{{\mathbb{Z}}}
\newcommand{\C}{{\mathbb{C}}}
\newcommand{\N}{\mathbb{N}}
\newcommand\bfH{\mathbf{H}}
\newcommand{\bfU}{\mathbf{U}}
\newcommand{\sfK}{\mathsf{K}}
\newcommand{\Rep}{\mathsf{Rep}}
\title[Fock space representation of the circle quantum group]{Fock space representation of the circle quantum group}
\author[F.~Sala]{Francesco Sala}
\address[Francesco Sala]{Università di Pisa, Dipartimento di Matematica, Largo Bruno Pontecorvo 5, 56127 Pisa (PI), Italy}
\address{Kavli IPMU (WPI), UTIAS, The University of Tokyo, Kashiwa, Chiba 277-8583, Japan}
\curraddr{}
\email{\href{mailto:francesco.sala@unipi.it}{francesco.sala@unipi.it}}
\author[O.~Schiffmann]{Olivier Schiffmann}
\address[Olivier Schiffmann]{Laboratoire de Math\'ematiques, Faculté des Sciences d'Orsay, Université Paris-Sud, B\^at. 307, 91405 Orsay Cedex, France}
\curraddr{}
\email{\href{mailto:olivier.schiffmann@math.u-psud.fr}{olivier.schiffmann@math.u-psud.fr}}
\thanks{The work of the first-named author is partially supported by World Premier International Research Center Initiative (WPI), MEXT, Japan, by JSPS KAKENHI Grant number JP17H06598 and by JSPS KAKENHI Grant number JP18K13402.}
\subjclass[2010]{Primary: 17B37; Secondary: 22E65}
\keywords{Quantum groups, continuum quantum groups, circle quantum group, Fock spaces, pyramids}
\begin{document}

\begin{flushright}
IPMU--19--0027
\end{flushright}

\vskip 1cm

\begin{abstract}
	In \cite{art:salaschiffmann2017} we have defined quantum groups $\bfU_\upsilon(\fraksl(\R))$  and $\bfU_\upsilon(\fraksl(S^1))$, which can be interpreted as continuum generalizations of the quantum groups of the Kac-Moody Lie algebras of finite, respectively affine type $A$. In the present paper, we define the Fock space representation $\calF_\R$ of the quantum group $\bfU_\upsilon(\fraksl(\R))$ as the vector space generated by real pyramids (a continuum generalization of the notion of partition). In addition, by using a variant version of the ``folding procedure" of Hayashi-Misra-Miwa, we define an action of $\bfU_\upsilon(\fraksl(S^1))$ on $\calF_\R$. 
\end{abstract}

\maketitle\thispagestyle{empty}

\tableofcontents

\section{Introduction}

The present article continues the study of the \textit{circle quantum group} $\bfU_\upsilon(\fraksl(S^1))$ introduced in \cite{art:salaschiffmann2017}, where $S^1\coloneqq \R/\Z$. In that paper we constructed geometrically a family of representations $V_{g,\,r}$ indexed by a pair of positive integers $(g,r)$\footnote{In the setting of \cite{art:salaschiffmann2017}, $\bfU_\upsilon(\fraksl(S^1))$ is realized as (reduced) Drinfeld double of the spherical Hall algebra of parabolic torsion sheaves on a genus $g$ curve. The representation $V_{g,\, r}$ arises from the Hall algebra of rank $r$ parabolic vector bundles on the curve.}, with $V_{0,1}$ being the natural ``vector'' representation and $V_{0,\, r}$ an analog of the $r$-fold symmetric power of $V_{0,\, 1}$. Our goal here is to define combinatorially a Fock space representation $\calF_\R$ of $\bfU_\upsilon(\fraksl(S^1))$, which may be thought of as a limit of $V_{0,\, r}$ as $r$ tends to infinity.  

Before stating the main results of the paper, let us briefly recall the classical construction of the Fock space representation $\calF$ of $\bfU_\upsilon(\fraksl(\infty))$ and $\bfU_\upsilon(\hatfraksl(n))$. Set $\widetilde{\Q} \coloneqq\Q[\upsilon, \upsilon^{-1}]$. Then, $\calF$ is the $\widetilde{\Q}$-vector space with a basis formed by vectors $\vert \lambda\rangle$ labelled by \emph{partitions} $\lambda$, i.e., 
\begin{align}
\calF\coloneqq  \bigoplus_{\lambda}\,  \widetilde \Q \vert \lambda\rangle\ .
\end{align}
Roughly speaking, the action of the positive (resp.\ negative) part of $\bfU_\upsilon(\fraksl(\infty))$ is given in terms of the combinatorial procedure of removing (resp.\ adding) a box to the Young diagram $Y_\lambda$ associated to the partition $\lambda$, while the Cartan subalgebra acts diagonally with a $\upsilon$-factor depending on the number of addable and removable boxes of a Young diagram. 

The action of $\bfU_\upsilon(\hatfraksl(n))$ on $\calF$ was originally defined by Hayashi \cite{art:hayashi1990} by using $\upsilon$-analogs of Clifford and Weyl algebras and their actions on the exterior and polynomial algebras, respectively. Miwa-Misra \cite{art:misramiwa1990} gave another interpretation of the action in terms of operations on Young diagrams. Finally, Varagnolo-Vasserot \cite{art:varagnolovasserot1999} reinterpreted the ``folding procedure" in terms of Hall algebras associated to nilpotent representations of the infinite quiver $A_\infty$ and the affine quiver $A_{n-1}^{(1)}$ respectively. Note that $\calF$ is the level one Fock space with fundamental weight $\Lambda_0$ of $\bfU_\upsilon(\hatfraksl(n))$.

\subsection{Main results }

Just as the quantum group $\bfU_\upsilon(\fraksl(S^1))$ is an uncountable colimit of $\bfU_\upsilon(\hatfraksl(n))$ as $n$ tends to infinity, the Fock space $\calF_{\R}$ is an uncountable colimit of the standard Fock space representation of $\bfU_\upsilon(\hatfraksl(n))$. One main novelty of the limit which we consider is that instead of partitions, the Fock space $\calF_{\R}$ has a basis indexed by what we call \emph{(real) pyramids}\footnote{Although we are confident that we have chosen the right terminology, one of the anonymous referees pointed out that \emph{ziggurat} could have been a good choice as well.} (see Section~\ref{s:partitionpyramids}). Integral pyramids are close cousins of Maya diagrams and are in bijection with partitions. For instance, on the left-hand-side there is the Young diagram of the partition $(5,4, 4,3,1, 1)$ with its standard contents written inside the corresponding boxes, and on the right-hand-side there is the corresponding integral pyramid:
\begin{align}
	\begin{tikzpicture}
	\draw[gray,very thin](-6,0) -- (-3.5,0);
	\draw[gray,very thin](-6,0.5) -- (-3.5,0.5);
	\draw[gray,very thin](-6,1) -- (-4,1);
	\draw[gray,very thin](-6,1.5) -- (-4,1.5);
	\draw[gray,very thin](-6,2) -- (-4.5,2);
	\draw[gray,very thin](-6,2.5) -- (-5.5,2.5);
	\draw[gray,very thin](-6,3) -- (-5.5,3);
	\draw[gray,very thin](-6,0) -- (-6,3);
	\draw[gray,very thin](-5.5,0) -- (-5.5,3);
	\draw[gray,very thin](-5,0) -- (-5,2);
	\draw[gray,very thin](-4.5,0) -- (-4.5,2);
	\draw[gray,very thin](-4,0) -- (-4,1.5);
	\draw[gray,very thin](-3.5,0) -- (-3.5,0.5);
	\draw[step=0.5cm,gray,very thin] (-0.5,0) grid (4.5,0.5);
	\draw[step=0.5cm,gray,very thin] (1,0.5) grid (3.5,1);
	\draw[step=0.5cm,gray,very thin] (1.5,1) grid (3,1.5);
	\draw[gray,very thin] (1,0.5) -- (1,1);
	\draw[gray,very thin] (1.5,1) -- (1.5,1.5);
	\draw (2,0) circle (0) node[anchor=north] {\tiny{$0$}};
	\draw (2.5,0) circle (0) node[anchor=north] {\tiny{$1$}};
	\draw (1.5,0) circle (0) node[anchor=north] {\tiny{$-1$}};
	\draw (3,0) circle (0) node[anchor=north] {\tiny{$2$}};
	\draw (1,0) circle (0) node[anchor=north] {\tiny{$-2$}};
	\draw (3.5,0) circle (0) node[anchor=north] {\tiny{$3$}};
	\draw (0.5,0) circle (0) node[anchor=north] {\tiny{$-3$}};
	\draw (4,0) circle (0) node[anchor=north] {\tiny{$4$}};
	\draw (0,0) circle (0) node[anchor=north] {\tiny{$-4$}};
	\draw (-0.5,0) circle (0) node[anchor=north] {\tiny{$-5$}};
	\draw (-2.25,1.5) circle (0) node[anchor=north] {\tiny{$\longleftrightarrow$}};
	\draw (-5.75,0.4) circle (0) node[anchor=north] {\tiny{$0$}};
	\draw (-5.25,0.9) circle (0) node[anchor=north] {\tiny{$0$}};
	\draw (-4.75,1.4) circle (0) node[anchor=north] {\tiny{$0$}};
	\draw (-5.25,0.4) circle (0) node[anchor=north] {\tiny{$1$}};
	\draw (-4.75,0.9) circle (0) node[anchor=north] {\tiny{$1$}};
	\draw (-4.25,1.4) circle (0) node[anchor=north] {\tiny{$1$}};
	\draw (-5.75,0.9) circle (0) node[anchor=north] {\tiny{$-1$}};
	\draw (-5.25,1.4) circle (0) node[anchor=north] {\tiny{$-1$}};
	\draw (-4.75,1.9) circle (0) node[anchor=north] {\tiny{$-1$}};
	\draw (-5.75,1.4) circle (0) node[anchor=north] {\tiny{$-2$}};
	\draw (-5.75,1.9) circle (0) node[anchor=north] {\tiny{$-3$}};
	\draw (-5.75,2.4) circle (0) node[anchor=north] {\tiny{$-4$}};
	\draw (-5.75,2.9) circle (0) node[anchor=north] {\tiny{$-5$}};
	\draw (-5.25,1.9) circle (0) node[anchor=north] {\tiny{$-2$}};
	\draw (-4.75,0.4) circle (0) node[anchor=north] {\tiny{$2$}};
	\draw (-4.25,0.4) circle (0) node[anchor=north] {\tiny{$3$}};
	\draw (-3.75,0.4) circle (0) node[anchor=north] {\tiny{$4$}};
	\draw (-4.25,0.9) circle (0) node[anchor=north] {\tiny{$2$}};
	\end{tikzpicture}
\end{align}
Pictorially, we pass from Young diagrams to $\Z$-pyramids by tilting $45$ degrees to the left (or writing partitions the Russian way) and letting gravity act. Unlike partitions, pyramids admit a natural extensions to $\R$. For example, the following is a real pyramid:
\begin{align}
	\begin{tikzpicture}
	\draw [gray,very thin] (-0.5,0) rectangle (0.7,0.5);
	\draw [gray,very thin] (0.7,0.5) rectangle (1,1);
	\draw [gray,very thin] (0.7,0) rectangle (4.4,0.5);
	\draw [gray,very thin] (1,1) rectangle (2.1,1.5);
	\draw [gray,very thin] (1,0.5) rectangle (3.7,1);
	\draw[] (1.7,0) -- (1.7,1.5);
	\draw[] (0.7,0) -- (0.7,0.5);
	\draw[] (1,0) -- (1,1);
	\draw[] (2.11,0) -- (2.11,1);
	\draw[] (3.7,0) -- (3.7,1);
	\draw (1.7,0) circle (0) node[anchor=north] {\tiny{$0$}};
	\draw (-0.5,0) circle (0) node[anchor=north] {\tiny{$-\frac{11}{5}$}};
	\draw (0.5,0) circle (0) node[anchor=north] {\tiny{$-1$}};
	\draw (1,0) circle (0) node[anchor=north] {\tiny{$-\frac{7}{10}$}};
	\draw (2.1,0) circle (0) node[anchor=north] {\tiny{$\frac{2}{5}$}};
	\draw (3.7,0) circle (0) node[anchor=north] {\tiny{$2$}};
	\draw (4.4,0) circle (0) node[anchor=north] {\tiny{$e$}};
	\end{tikzpicture}
\end{align}
Let $\mathsf{Pyr}(\R)$ be the set of all $\R$-pyramids. As a vector space, our Fock space is defined as
\begin{align}
	\calF_\R\coloneqq  \bigoplus_{p \in \mathsf{Pyr}(\R)}\,  \widetilde \Q \vert p\rangle \ .
\end{align}
In \cite{art:salaschiffmann2017} we also defined a Lie algebra $\fraksl(\R)$ --- now as a uncountable colimit of the Lie algebras $\fraksl(n)$ as $n$ tends to $\infty$ --- and a corresponding quantum group $\bfU_\upsilon(\fraksl(\R))$, the \emph{line quantum group}. It is generated by elements $E_J,F_J,K_J$ for $J$ an interval (cf.\ Definition~\ref{def:R}). Let $ 0 $ stand for the empty pyramid.
\begin{theorem*}[cf.\ Theorem~\ref{thm:Fock-R}]
		The following formulas define an action of the quantum group $\bfU_\upsilon(\fraksl(\R))$ on $\calF_\R$: for any $J=[a,b[$ and $p \in \mathsf{Pyr}(\R)$
	\begin{align}
	E_J \vert p\rangle&\coloneqq \begin{cases}
	-\upsilon^{1/2} (-\upsilon)^{p(b)-p(a)}\, \vert p-\cf_J\rangle &\text{if $J$ is a removable interval of $p$}\ ,\\[4pt]
	0 & \text{otherwise}\ ,
	\end{cases}\\[6pt]
	F_J \vert p\rangle&\coloneqq \begin{cases}
	\upsilon^{1/2} (-\upsilon)^{p(a)-p(b)} \, \vert p+\cf_J\rangle &\text{if $J$ is an addable interval of $p$}\ ,\\[4pt]
	0 & \text{otherwise}\ ,
	\end{cases}\\[6pt]
	K_J \vert p\rangle&\coloneqq \upsilon^{n_{J}(p)}\, \vert p\rangle\ ,
	\end{align}
	where
	\begin{align}
	n_{J}(p) =
	\begin{cases}
	0 & \text{if $J$ is neither addable or removable to $p$}\ , \\
	1 & \text{if $J$ is addable to $p$}\ ,\\
	-1 & \text{if $J$ is removable to $p$}\ .
	\end{cases} 
	\end{align}
	The representation $\calF_{\R}$ is an irreducible highest weight representation with highest weight vector $\vert 0 \rangle$. Here, $\cf_J$ is the characteristic function of the interval $J$.
\end{theorem*}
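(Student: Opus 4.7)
The plan is to check directly that the formulas in the statement define an algebra homomorphism from $\bfU_\upsilon(\fraksl(\R))$ to $\mathrm{End}(\calF_\R)$ by verifying the defining relations recalled in Definition~\ref{def:R}, and then to establish the irreducibility and highest-weight statements separately. A simplification used throughout is that every defining relation involves at most finitely many intervals $J_1,\dots,J_k$; acting on a given basis vector $|p\rangle$, only the endpoints of the $J_i$ together with the finitely many breakpoints of $p$ enter the combinatorics. Refining the real line by this finite set of points turns each verification into one about a $\Z$-pyramid supported on a finite type-$A$ sub-quiver, reducing the Serre and $[E,F]$ checks to the classical Fock space computation of Hayashi, Misra--Miwa, and Varagnolo--Vasserot.

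The Cartan relations are immediate since all $K_J$ act diagonally in the pyramid basis. The mixed relations of $K_J$ with $E_{J'}$ and $F_{J'}$ reduce to the combinatorial identity
\begin{equation}
n_{J'}(p \pm \cf_J) - n_{J'}(p) = \pm d(J,J'),
\end{equation}
where $d(J,J')$ is the appropriate pairing of simple roots attached to $J$ and $J'$; this is an elementary case analysis on the relative position of $J$ and $J'$ (disjoint and non-adjacent, sharing one endpoint, strictly nested, partially overlapping). For the Serre-type relations among the $E$'s (and symmetrically the $F$'s), the finite-rank reduction gives the result at once, once one checks that the sign factors $(-\upsilon)^{p(b)-p(a)}$ restrict correctly to the truncated pyramid -- which they do, since only the heights at the interval endpoints appear.

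The crux is the commutator $[E_J,F_{J'}]$. I would expand both $E_J F_{J'} |p\rangle$ and $F_{J'} E_J |p\rangle$ and enumerate the configurations in which $J'$ is addable to $p$ and $J$ is removable from $p+\cf_{J'}$ (and symmetrically in the other order). When $J\ne J'$, either both orderings vanish, or they produce the same intermediate pyramid with scalar prefactors $\upsilon^{1/2}(-\upsilon)^{p(b)-p(a)}$ evaluated at the two possible intermediate pyramids; these prefactors conspire to cancel because the $\pm 1$ shifts of $p(a)$ and $p(b)$ induced by $\cf_J$ and $\cf_{J'}$ come with compensating signs. When $J=J'$, the two terms act on the same $|p\rangle$ and combine into $(K_J-K_J^{-1})/(\upsilon-\upsilon^{-1}) |p\rangle$. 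I expect this sign bookkeeping, together with the matching of the $\upsilon^{1/2}$ normalizations, to be the principal obstacle; a careful case analysis, or equivalently the finite-rank reduction, makes it tractable but combinatorially tedious.

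For the highest-weight statement, the empty pyramid $0$ has no removable intervals, so $E_J |0\rangle = 0$ for every $J$, and since every interval is addable to $0$ the eigenvalue of $K_J$ on $|0\rangle$ is $\upsilon$ for all $J$, identifying $|0\rangle$ as a highest weight vector. For irreducibility, any pyramid $p$ is reached from $0$ by a finite sequence of additions of addable intervals -- by induction on the total area $\int_{\R} p(x)\,dx$, each such addition being a nonzero scalar multiple of the action of some $F_J$ -- so $|0\rangle$ cyclically generates $\calF_\R$. Conversely, given a nonzero submodule $M$ and a nonzero $v\in M$, choose a pyramid $p_0$ in the support of $v$ of maximal area; applying a sequence of $E_J$'s which successively remove the intervals of $p_0$ yields a nonzero multiple of $|0\rangle$, the other summands of $v$ having strictly smaller area and being annihilated first. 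Hence $|0\rangle \in M$, and combining both directions gives $M=\calF_\R$.
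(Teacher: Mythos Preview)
Your finite-rank reduction is a sound organizing principle --- and is in fact how the paper handles the circle case in Theorem~\ref{thm:Fock-S1} --- but here it does not buy you a black-box appeal to the classical Fock space, and the paper instead proceeds by direct verification. The relations you call ``Serre-type'' comprise the \emph{nest} and \emph{join} relations of Definition~\ref{def:R}. The join relation asserts that the operator assigned to a composite interval $J=J_1\cup J_2$ coincides with $\upsilon^{1/2}E_{J_1}E_{J_2}-\upsilon^{-1/2}E_{J_2}E_{J_1}$; since the theorem prescribes an explicit formula for $E_J$ for \emph{every} $J$, not just simple ones, this is a genuine consistency condition on the prefactors $(-\upsilon)^{p(b)-p(a)}$, and it is not something one can read off from the classical setup, where only simple-root operators are given. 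The paper checks the join and nest relations directly by a short case analysis on these scalars, and this computation (or an equivalent one showing that the prescribed $E_J$ agrees with the iterated $q$-commutator of simple operators) is unavoidable. Even after you do reduce to simple roots, the resulting action differs from Hayashi's by the rescaling made explicit in Section~\ref{ss:infty}; to legitimately invoke the classical result you must observe that this rescaling is of the form $E_i\mapsto c_iE_i,\ F_i\mapsto c_i^{-1}F_i$ with $c_i$ constant, hence an automorphism of the algebra, and you do not say this.

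Two smaller slips. Not every interval is addable to the empty pyramid: $\cf_J$ is a pyramid only when $0\in J$, so $K_J\,|0\rangle=\upsilon^{\delta_{0\in J}}\,|0\rangle$ (cf.\ Lemma~\ref{lem:K}) rather than $\upsilon\,|0\rangle$ for all $J$. And your irreducibility argument only disposes of summands of strictly smaller area, whereas several distinct pyramids of the same maximal size may occur in the support of $v$. The paper's remedy is that if $p=\sum_i\cf_{I_i}$ with the $I_i$ strictly nested, then $E_{I_1}\cdots E_{I_s}\cdot|q\rangle=0$ for every $q\neq p$ of the same size; an alternative fix is to note that the $K_J$-weight spaces are one-dimensional (the collection $\{n_J(p)\}_J$ determines $p$ via Lemma~\ref{lem:K}) and project $v$ to a single weight space first.
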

As one may see above, the continuum analog of removing or adding a box in a Young diagram is the operation of removing or adding intervals in a pyramid. 

The circle quantum group $\bfU_\upsilon(\fraksl(S^1))$ was defined in \cite{art:salaschiffmann2017}.  Due to the absence of simple roots for the circle Lie algebra $\fraksl(S^1)$, if we simply substitute the generators of $\bfU_\upsilon(\hatfraksl(n))$ with those of $\bfU_\upsilon(\fraksl(S^1))$ in Hayashi's formulas for the action of $\bfU_\upsilon(\hatfraksl(n))$ on the Fock space, we do not obtain an action of $\bfU_\upsilon(\fraksl(S^1))$ on $\calF_\R$. The way we construct the action of $\bfU_\upsilon(\fraksl(S^1))$ here consists of generalizing the geometric folding procedure due to Varagnolo and Vasserot which is based on the theory of Hall algebras. Precisely, the folding procedure is induced by an homomorphism between the Hall algebra of the continuum finite type A quiver $\R$ and the Hall algebra of the continuum affine type A quiver $S^1$, which realize a positive part of $\bfU_\upsilon(\fraksl(\R))$  and $\bfU_\upsilon(\fraksl(S^1))$, respectively. The explicit formulas of our folding procedure are quite involved (see~Theorem~\ref{thm:Fock-S1}): in particular, since $\bfU_\upsilon(\fraksl(S^1))$ is generated by elements $E_J,F_J,K_J$ for $J$ an interval in $S^1$, the absence of simple roots means that the folding procedure involves breaking up $J$ into finitely many smaller intervals in all possible ways. 
We obtain:
\begin{theorem*}[cf.\ Theorem~\ref{thm:Fock-S1}]
There exists a natural action of $\bfU_\upsilon(\fraksl(S^1))$ on $\calF_\R$, which strictly contains the irreducible highest weight representation generated by the vacuum vector $\vert 0 \rangle$.
\end{theorem*}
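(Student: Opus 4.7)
The strategy is to adapt to the continuum setting the folding procedure of Varagnolo--Vasserot that, in the discrete case, turns the Fock space of $\bfU_\upsilon(\fraksl(\infty))$ into a representation of $\bfU_\upsilon(\hatfraksl(n))$. Concretely, for each interval $J\subset S^1$, I would write down explicit operators $E_J^{S^1}, F_J^{S^1}, K_J^{S^1}$ on $\calF_\R$, each expressed as a finite sum of compositions of the operators $E_I, F_I, K_I$ of $\bfU_\upsilon(\fraksl(\R))$ coming from the theorem already proved. The sum is indexed by ordered decompositions of $J$ into finitely many sub-intervals together with a choice of lift to $\R$ for each piece; the coefficients are monomials in $\upsilon^{\pm 1/2}$ and signs determined by the relative positions of the chosen lifts. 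Since a given pyramid $p$ has only finitely many jump points, only finitely many terms in these sums act nontrivially on $|p\rangle$, so the operators are well defined on every basis vector.

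The naturality of these formulas is dictated by the Hall-algebra realization established in \cite{art:salaschiffmann2017}: the positive and negative parts of $\bfU_\upsilon(\fraksl(\R))$ and $\bfU_\upsilon(\fraksl(S^1))$ are identified with spherical Hall algebras of the categories of torsion sheaves on the continuum quivers $\R$ and $S^1$ respectively, and the covering map $\pi\colon \R\to S^1$ induces a pushforward between these categories. Transporting this pushforward through the Hall algebra, after passing to a suitable completion, produces a folding homomorphism whose composition with the $\bfU_\upsilon(\fraksl(\R))$-action on $\calF_\R$ is exactly the collection of operators above.

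The main obstacle is verifying that these operators satisfy the defining relations of $\bfU_\upsilon(\fraksl(S^1))$. The diagonal Cartan identities and the mixed $E$-$F$ commutation formulas can be checked by direct bookkeeping on basis vectors. The continuum analogue of the Serre relations is the delicate point: I would reduce it to showing that the folding map is a genuine algebra homomorphism, which in turn follows, via the Hall-algebra realization, from the compatibility of $\pi_\ast$ with the Green coproduct on the relevant torsion-sheaf categories. Unwinding this compatibility yields a combinatorial identity on decompositions of intervals that mirrors the finite-rank Varagnolo--Vasserot argument, but must be carried out with care in the presence of continuous endpoint parameters and of the multiple lifts afforded by the covering $\pi$.

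For the last assertion, I would examine the submodule $M\subseteq\calF_\R$ generated by $|0\rangle$. The vacuum is manifestly a $K$-eigenvector and is annihilated by every $E_J^{S^1}$, since each term in the folded formula reduces to an $E_I$ of $\bfU_\upsilon(\fraksl(\R))$ applied to the empty pyramid, which vanishes. This yields a highest-weight structure on $M$, from which irreducibility follows by a standard argument using the weight decomposition of $\calF_\R$ refined by total pyramid area. To establish the strict inclusion $M\subsetneq\calF_\R$, I would exhibit an invariant of the action --- for instance a grading by total ``winding number'' around $S^1$, or the action of a commuting Heisenberg-type subalgebra analogous to the one present already in the Hayashi--Misra--Miwa setting --- that separates the orbit of $|0\rangle$ from suitably chosen real pyramids, and conclude that these pyramids cannot lie in $M$.
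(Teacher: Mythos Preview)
Your overall strategy for constructing the action---folding via the Hall-algebra realization of $\bfU^\pm_\upsilon(\fraksl(\R))$ and $\bfU^\pm_\upsilon(\fraksl(S^1))$---is the same as the paper's. There is, however, a methodological difference worth flagging. Rather than carrying out the Varagnolo--Vasserot argument ``with care in the presence of continuous endpoint parameters,'' the paper systematically \emph{reduces to the finite case}. Any relation to be checked involves only finitely many intervals $J$ and a single pyramid $p$; choosing a $1$-periodic discrete subset $\alpha(\Z)\subset\R$ containing $0$, $D_\R(p)$, and all relevant endpoints yields isomorphisms $\bfU_\upsilon(\fraksl(S^1_{\alpha(\Z)}))\simeq\bfU_\upsilon(\hatfraksl(N))$ and $\calF_{\alpha(\Z)}\simeq\calF_{\frac{1}{N}\Z}$ compatible with the folded operators. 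This collapses both the proof that the folding map is an algebra homomorphism (Proposition~\ref{P:fockhall}) and the verification of the Drinfeld--Jimbo relations to the already-known cyclic-quiver statements of \cite{art:varagnolovasserot1999}. Your proposed direct bookkeeping may succeed, but the reduction sidesteps the continuum combinatorics entirely.

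The genuine gap is in your argument for the strict inclusion $M\subsetneq\calF_\R$. Neither of your proposed invariants works. A ``winding number'' grading is not defined on pyramids: the folded operator $F_J^{S^1}$ acts by adding characteristic functions of intervals lifted to $\R$, and there is no winding to track on the target side. More seriously, the Heisenberg-type commuting subalgebra you invoke---the center of the Hall algebra in the cyclic-quiver case---does \emph{not} live inside $\bfU_\upsilon(\fraksl(S^1_\K))$ when $\K=\Q$ or $\R$; it appears only after completion (the paper itself makes this point in the remark following the highest-weight-vector proposition), so it cannot be used to separate vectors in $\calF_\R$ as a $\bfU_\upsilon(\fraksl(S^1_\K))$-module. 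The paper's argument is instead to exhibit the explicit vector $|\cf_{[0,1[}\rangle$ and, once more reducing to the finite case, observe that any hypothetical expression $u\cdot|0\rangle=|\cf_{[0,1[}\rangle$ with $u$ a polynomial in finitely many $F_{J_i}$'s already takes place inside some $\bfU_\upsilon(\hatfraksl(N))\subset\bfU_\upsilon(\fraksl(S^1_\K))$, where one checks directly that the partition $(N)$ is not in the $\bfU_\upsilon(\hatfraksl(N))$-orbit of the vacuum in Hayashi's Fock space.
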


As opposed to the case of the Fock space of $\bfU_{\upsilon}(\hatfraksl(n))$ (see e.g. \cite{art:kashiwaramiwastern1995}), $\calF_{\R}$ contains no highest weight vector other than the vacuum vector $\vert 0 \rangle$. On the other hand, it is not a cyclic representation. It is, however, a cyclic representation of the Hall algebra of $S^1_\R$ (which strictly contains $\bfU^-_{\upsilon}(\fraksl(S^1))$). Its precise structure will be studied in a sequel to this paper.

\subsection{Higher level Fock spaces}

Above, we provide a continuum analog of the level one Fock space representation with fundamental weight $\Lambda_0$. One can take the $\ell$ tensor product $\calF_\R[\ell]$ of $\calF_\R$. It can be endowed with a well-defined action of $\bfU_\upsilon(\fraksl(\R))$. This action is defined via the coproduct: even though the coproduct is only topological and takes value in a suitable completion of tensor products of $\bfU_\upsilon(\fraksl(\R))$, we do not need to complete $\calF_\R[\ell]$ as well to get an action. In another direction, one can translate the pyramids along the $\R$-axis: this will give rise to (level $\ell\geq 1$) Fock space representations associated to other fundamental weights\footnote{We believe that, in our theory, weights should correspond to distribution in $\F(\R)$.}. Tensor products of Fock spaces with \textit{different} highest weights are more delicate to handle. We will address the study of these variations of the construction of the Fock space representation for the line and circle quantum groups in \cite{art:salaschiffmann2019}.

\subsection{$\K$-Variations}

Originally, it is a version of the quantum group associated to the \textit{rational} circle $S^1_\Q = \Q /\Z$ or of the rationals $\Q$ which appears in \cite{art:salaschiffmann2017}. One can define such quantum groups $\bfU_\upsilon(\fraksl(\K))$ and $\bfU_\upsilon(\fraksl(S^1_\K))$, for any subset of $\K \subset \R$ which is $1$-periodic, where $S^1_\K\coloneqq \K/\Z$ for $\Z\subsetneq \K$. When $\K=\Z$, our quantum group $\bfU_\upsilon(\fraksl(\Z))$ coincides with $\bfU_\upsilon(\fraksl(\infty))$, but it is much bigger when $\K=\Q$ or $\K=\R$. Likewise, for $\K=\frac{1}{n}\Z$ we have $\bfU_\upsilon(\fraksl(S^1_\K)) \simeq \bfU_\upsilon(\widehat{\fraksl}(n)))$ but is is much larger in the other two cases.
The Fock space construction can be carried out for any such $\K$, and we do it in this generality. For instance, the Fock space representation $\calF_\Q$ of $\bfU_\upsilon(\fraksl(S^1_\Q))$ has a basis indexed by $\Q$-pyramids, which are pyramids whose jumps occur only at rational numbers. When $\K=\Z$ we recover the usual Fock space action of $\bfU_\upsilon(\fraksl(\infty))$, up to some scaling factors.

\subsection{Line and circle quantum groups vs. continuum quantum groups}

In \cite{appel-sala-schiffmann-18}, the authors, together with Andrea Appel, define a \emph{continuum} generalization of the Kac--Moody Lie algebras, associated with a \emph{continuum} generalization of the notion of a quiver. In particular, the vertex set of a quiver is replaced by an Hausdorff topological space $X$, and the vertices of the quiver are replaced by connected intervals in $X$ (we refer to \emph{loc. cit.} for the relevant definitions). We denote by $\frakg_X$ the corresponding continuum Kac--Moody Lie algebra. It is proved in \emph{loc. cit.} that  $\frakg_\K$ coincides with $\fraksl(\K)$, while the Lie algebra $\frakg_{S^1_\K}$ differs from $\fraksl(S^1_\K)$ by an \emph{Heisenberg Lie algebra of order one}. In \cite{appel-sala-19}, the first-named author, together with Andrea Appel, constructed a topological Lie bialgebra structure on the continuum Kac--Moody Lie algebra $\frakg_X$ of any space $X$ and defined algebraically the quantization $\bfU_\upsilon (\frakg_X)$ of $\frakg_X$, which is called the \emph{continuum quantum group} of $X$. It is proved in \emph{loc. cit.} that $\bfU_\upsilon (\frakg_\K)$ coincides with $\bfU_\upsilon(\fraksl(\K))$, while the quantum group $\bfU_\upsilon (\frakg_{S^1_\K})$ differs from $\bfU_\upsilon(\fraksl(S^1_\K))$ by a \emph{quantum Heisenberg algebra of order one}. Finally, in \cite{appel-kuwagaki-sala-schiffmann-18}, we are constructing the quantum group $\bfU_\upsilon (\frakg_X)$ via the theory of Hall algebras. We expect to extend some of the techniques of this paper to such a more general setting, in particular the construction of a Fock space representation for any $\bfU_\upsilon (\frakg_X)$ will be the subject of a sequel to this paper.
%

\subsection{Outline}

The paper is organized as follows. In Section~\ref{s:partitionpyramids}, we recall the notions of partitions and Young diagrams, and describe their topological refinement as rational and real pyramids. Section~3 serves as a reminder about Hayashi's Fock space and Varagnolo-Vasserot's geometric interpretation of the folding procedure. In Section~\ref{s:Fockspaceline}, we introduce our Fock space $\calF_\K$ and the action of the quantum group $\bfU_\upsilon(\fraksl(\K))$, for $\K\in \{\Z, \Q, \R\}$, on it, while in Section~\ref{s:Fockspacecircle}, we define the action of $\bfU_\upsilon(\fraksl(S^1_\K))$, for $S^1_\K\coloneqq \K/\Z$ and $\K\in\{\Q, \R\}$, on $\calF_\K$. The proof that such an action is well-defined is given in Section~\ref{s:Fockspacecircle} and it is based on a variant of the ``folding procedure" depending on the theory of Hall algebras associated with certain topological quivers, as introduced in Section~\ref{s:Hallalgebra}.

\subsection*{Acknowledgements}

We thank Andrea Appel and Tatsuki Kuwagaki for helpful discussions and comments. We also thank the anonymous referees for useful suggestions and comments.

\subsection*{Notation and convention}

For any integer $n$, set
\begin{align}
	[n]\coloneqq \frac{\upsilon^n-\upsilon^{-n}}{\upsilon-\upsilon^{-1}}\quad\text{and}\quad [n]!\coloneqq [n]\, [n-1]\, \cdots \, [1]\ .
\end{align}
Set $\widetilde{\Q} \coloneqq\Q[\upsilon, \upsilon^{-1}]$ and put $q=\upsilon^2$. 

\bigskip\section{Partitions and pyramids}\label{s:partitionpyramids}

In this section, we introduce integral, rational, and real \emph{pyramids} and establish their basic properties. Rational and real pyramids are some ``continuous" generalization of the notion of partition, while we shall show that integral pyramids coincide with partitions.

\subsection{Recollection on partitions}\label{ss:partitions}

A \emph{partition} of a positive integer $n$ is a nonincreasing sequence of positive numbers $\lambda=(\lambda_1, \lambda_2, \cdots, \lambda_\ell)$ such that $\lambda_i\geq \lambda_{i+1}$ for $i=1, \ldots, \ell-1$ and $\vert\lambda\vert\coloneqq\sum_{a=1}^\ell \, \lambda_a=n.$ We call $\ell=\ell(\lambda)$ the \emph{length} of the partition $\lambda.$ Another description of a partition $\lambda$ of $n$ uses the notation $\lambda=(1^{m_1}\, 2^{m_2}\, \cdots)$, where $m_i=\#\{a\in\N\,\vert \, \lambda_a =i\}$ with $\sum_i \, i\, m_i=n$ and $\sum_i\, m_i=\ell(\lambda)$. We denote by $\Pi(n)$ the set of all partitions of $n$, and $\Pi\coloneqq \bigcup_n\, \Pi(n)$.  On the set $\Pi$ of all partitions there is a natural partial ordering called \emph{dominance ordering}: for two partitions $\mu$ and $\lambda$, we write $\mu\leq\lambda$ if and only if $\vert \mu\vert=\vert\lambda\vert$ and $\mu_1+\cdots+\mu_a\leq\lambda_1+\cdots+\lambda_a$ for all $a\geq1$. We write $\mu<\lambda$ if and only if $\mu\leq\lambda$ and $\mu\neq\lambda.$ 

One can associate with a partition $\lambda$ its \emph{Young diagram}, which is the set $Y_\lambda=\{(x,y)\in\Z_{>0}^2\,\vert\, 1\leq y \leq \ell(\lambda)\, ,\, 1\leq x \leq \lambda_y\}$. Then $\lambda_y$ is the length of the $y$-th row of $Y_\lambda$; we write $\vert Y_\lambda\vert=\vert\lambda\vert$ for the \emph{weight} of the Young diagram $Y_\lambda$. We shall identify a partition $\lambda$ with its Young diagram $Y_\lambda$. For example, with the partition $\lambda=(5,4,4,3,1,1)$ we associate the Young diagram $Y_\lambda$:
	\begin{align}
	\begin{tikzpicture}
	\draw[gray,very thin](-6,0) -- (-3.5,0);
	\draw[gray,very thin](-6,0.5) -- (-3.5,0.5);
	\draw[gray,very thin](-6,1) -- (-4,1);
	\draw[gray,very thin](-6,1.5) -- (-4,1.5);
	\draw[gray,very thin](-6,2) -- (-4.5,2);
	\draw[gray,very thin](-6,2.5) -- (-5.5,2.5);
	\draw[gray,very thin](-6,3) -- (-5.5,3);
	\draw[gray,very thin](-6,0) -- (-6,3);
	\draw[gray,very thin](-5.5,0) -- (-5.5,3);
	\draw[gray,very thin](-5,0) -- (-5,2);
	\draw[gray,very thin](-4.5,0) -- (-4.5,2);
	\draw[gray,very thin](-4,0) -- (-4,1.5);
	\draw[gray,very thin](-3.5,0) -- (-3.5,0.5);
	\end{tikzpicture}
	\end{align}
For a partition $\lambda$, the \emph{transpose partition} $\lambda'$ is the partition whose Young tableau is $Y_{\lambda'}\coloneqq\{(b,a)\in \Z_{>0}^2\,\vert\,(a,b)\in Y_\lambda\}$. 

Finally, we call \emph{standard content} of $s=(x,y)\in Y_\lambda$ the quantity $c(s)\coloneqq x-y$. We say that box $s$ is \emph{of color $i$} if $c(s)=i$. An \emph{addable $i$-box} is a box of color $i$ which can be added to $Y_\lambda$ in such a way that the new diagram still comes from a partition, similarly a \emph{removable $i$-box} is a box of color $i$ which can be removed from $Y_\lambda$. For $i\in \Z_{> 0}$, define
\begin{align}
	n_i(\lambda)\coloneqq \# \{\text{addable $i$-boxes of $Y_\lambda$}\}-\# \{\text{removable $i$-boxes of $Y_\lambda$}\}\ .
\end{align}
\begin{remark}
	Note that
	\begin{align}
		n_i(\lambda)=
		\begin{cases}
			1 & \text{if there exists an addable box of color $i$}\ , \\
			-1 & \text{if there exists a removable box of color $i$} \ ,\\
			0 & \text{otherwise} \ .
		\end{cases}
	\end{align}
\end{remark}

\subsection{Integral pyramids}

We now provide another combinatorial realization of partitions very similar to that of Maya diagrams\footnote{A \emph{Maya diagram} is a sequence $\{m(n)\}_{n\in \Z}$ which consists of 0 or 1 and satisfies the following property: there exist $N, M \in \Z$ such that for all $n > N$ (resp.\ $n < M$), $m(n) = 1$ (resp.\ $m(n) = 0$).}, which we call ``integral pyramids". Below, 'increasing' and 'decreasing' are meant in the broad sense.

\begin{definition}
An \emph{integral pyramid} is a function $p\colon \Z \to \N$ satisfying the following properties:
\begin{enumerate}\itemsep0.2cm
	\item[i)] $p(n)=0$ for $\vert n\vert \gg 0$;
	\item[ii)] $p(0) =\max \{p(n)\,\vert\, n \in \Z\}$;
	\item[iii)] $p$ is increasing on $\Z_{-}$ and decreasing on $\Z_+$;
	\item[iv)] $\vert p(n)-p(n+1)\vert \leq 1$ for all $n \in \Z$.
\end{enumerate}
\end{definition}

We may represent a pyramid as a box diagram in which we draw $p(n)$ boxes over the integer $n$: for instance, we represent the pyramid such that $p(-5)=p(-4)=1,p(-3)=2, p(-2)=p(-1)=3,  p(0)=4,  p(1)=3,  p(2)=p(3)=2,  p(4)=p(5)=p(6)=1$ and $p(n)=0$ for $n < -5$ or $n >6$ as follows:
\begin{align}
	\begin{tikzpicture}
	\draw[step=0.5cm,gray,very thin] (-0.5,0) grid (5.5,0.5);
	\draw[step=0.5cm,gray,very thin] (0.5,0.5) grid (4,1);
	\draw[step=0.5cm,gray,very thin] (1,1) grid (3,1.5);
	\draw[gray,very thin] (1,1) -- (1,1.5);
	\draw[gray,very thin] (2,1.5) -- (2,2);
	\draw[step=0.5cm,gray,very thin] (2,1.5) grid (2.5,2);
	\draw (2,0) circle (0) node[anchor=north] {\tiny{$0$}};
	\draw (2.5,0) circle (0) node[anchor=north] {\tiny{$1$}};
	\draw (1.5,0) circle (0) node[anchor=north] {\tiny{$-1$}};
	\draw (3,0) circle (0) node[anchor=north] {\tiny{$2$}};
	\draw (1,0) circle (0) node[anchor=north] {\tiny{$-2$}};
	\draw (3.5,0) circle (0) node[anchor=north] {\tiny{$\cdots$}};
	\draw (0.5,0) circle (0) node[anchor=north] {\tiny{$\cdots$}};
	\end{tikzpicture}
\end{align}

The set of pyramids with $n$ boxes will be denoted $\mathsf{Pyr}(n)$.
\begin{lemma}\label{lem:bijection}
	There is a canonical bijection between $\mathsf{Pyr}(n)$ and the set $\Pi(n)$ of partitions of $n$.
\end{lemma}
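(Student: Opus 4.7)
The natural candidate for the bijection is the map $\phi \colon \Pi(n) \to \mathsf{Pyr}(n)$ defined by
\[
\phi(\lambda)(c) \coloneqq \#\{s \in Y_\lambda \,\vert\, c(s) = c\},
\]
which counts the boxes of $Y_\lambda$ lying on the diagonal of content $c$. This is exactly the ``rotate $Y_\lambda$ by $45^\circ$ and let gravity act'' picture from the introduction, and by construction $\sum_c \phi(\lambda)(c) = |Y_\lambda| = n$.

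The first step is to verify that $\phi(\lambda)$ is an integral pyramid. Finiteness of support is clear. Monotonicity on $\Z_\pm$ (axiom (iii)) follows from the two diagonal-shift injections
\[
\{s \in Y_\lambda : c(s) = c+1\} \hookrightarrow \{s \in Y_\lambda : c(s) = c\}, \qquad (x,y) \mapsto (x-1,y),
\]
for $c \geq 0$ (well-defined because $x = y + c + 1 \geq 2$ and $\lambda_y \geq x > x-1$), together with the symmetric map $(x,y) \mapsto (x,y-1)$ for $c \leq 0$; axiom (ii) is then immediate. For the unit-step condition (iv), a direct count yields $\phi(\lambda)(c) - \phi(\lambda)(c+1) = \#\{y \geq 1 : \lambda_y - y = c\}$ for $c \geq 0$, which has cardinality at most one because the sequence $(\lambda_y - y)_{y \geq 1}$ is strictly decreasing on the positive rows; the case $c < 0$ is symmetric via the transpose.

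For the inverse, I would set $d \coloneqq p(0)$ and read off from $p$ its two finite sets of ``jumps'' to the right and to the left of $0$:
\[
P(p) \coloneqq \{c \geq 0 : p(c) > p(c+1)\}, \qquad H(p) \coloneqq \{c \geq 1 : p(-c) > p(-c-1)\},
\]
both of cardinality $d$ by (iii), (iv) and $p(\pm\infty)=0$. Writing $P(p) = \{b_1 < \cdots < b_d\}$ and $H(p) = \{a_1 < \cdots < a_d\}$ in increasing order, I would define the first $d$ rows and the first $d$ columns of a candidate Young diagram by
\[
\lambda_y \coloneqq b_{d+1-y} + y \ \ (1 \leq y \leq d), \qquad \lambda'_x \coloneqq a_{d+1-x} + x \ \ (1 \leq x \leq d),
\]
and check that both $(\lambda_y)$ and $(\lambda'_x)$ are non-increasing with last term $\geq d$, so that they glue into a well-defined partition $\psi(p) \in \Pi(n)$ with $d \times d$ Durfee square.

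The two relations $\psi \circ \phi = \mathrm{id}$ and $\phi \circ \psi = \mathrm{id}$ then both reduce to the identities
\[
P(\phi(\lambda)) = \{\lambda_y - y : 1 \leq y \leq d\}, \qquad H(\phi(\lambda)) = \{\lambda'_x - x : 1 \leq x \leq d\},
\]
the first being a direct corollary of the count in step two, and the second following from the first by the obvious symmetry $\phi(\lambda)(c) = \phi(\lambda')(-c)$. I expect the main obstacle to be the bookkeeping outside the Durfee square: one must verify that the positive-content side of $p$ records exactly the rows of $Y_\lambda$ with $\lambda_y \geq y$, the negative-content side records exactly the columns with $\lambda'_x \geq x$, and that these two halves glue consistently along $c = 0$ so that the combined data produces a single partition of total size $n$.
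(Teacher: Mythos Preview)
Your forward map $\phi$ is exactly the paper's map $f$, and the overall strategy (define the content-counting map, then exhibit an inverse) is the same. The paper simply \emph{asserts} that the inverse $g$ exists --- ``assign to $p$ the unique Young diagram for which the number of boxes of content $n$ is $p(n)$'' --- and calls the verification straightforward; you instead make the inverse explicit via the Durfee square and Frobenius coordinates, and you actually check the pyramid axioms for $\phi(\lambda)$. So this is the same approach with substantially more detail supplied.

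One concrete slip in your bookkeeping: the set $H(p)$ should read $\{c \geq 0 : p(-c) > p(-c-1)\}$, not $c \geq 1$. With your range the telescoping sum gives $|H(p)| = p(-1)$, which need not equal $d=p(0)$; for instance $\lambda=(1)$ has $p(0)=1$, $p(-c)=0$ for $c\geq 1$, so your $H(p)$ is empty. Allowing $c=0$ restores $|H(p)|=p(0)=d$ and makes the claimed identity $H(\phi(\lambda))=\{\lambda'_x - x : 1\leq x\leq d\}$ correct (note that $\lambda'_d - d$ may equal $0$, so the index set must include it). Once this is fixed, your Frobenius-coordinate inverse $\psi$ and the two identities $P(\phi(\lambda))=\{\lambda_y-y\}$, $H(\phi(\lambda))=\{\lambda'_x-x\}$ do exactly the job, and the ``gluing along the Durfee square'' worry you flag is handled automatically by the standard fact that the arm/leg data $(\alpha_i\,|\,\beta_i)$ determine a unique partition.
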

\begin{proof}
	Let us define a map $f\colon \Pi(n)\to \mathsf{Pyr}(n)$ as follows. Let $\lambda$ be a partition and let $Y_\lambda$ be its Young diagram. We fill the boxes of $Y_{\lambda}$ with the standard content $c(s)=y-x$ if $s=(x,y)$. Then $f(\lambda)$ is the pyramid $p_{\lambda}$ defined by $p_{\lambda}(n)\coloneqq\#\{ s \in Y_\lambda\,\vert\, c(s)=n\}$ for any integer $n$. Viceversa, we can define a map $g\colon \mathsf{Pyr}(n)\to \Pi(n)$ by assigning to a pyramid $p$ the unique Young diagram for which the number of boxes $s$ for which $c(s)=n$ is exactly $p(n)$ for any $n\in \Z$. It is straighforward to see that $g$ is the inverse of $f$.
\end{proof}

\begin{ex}
	For instance, to the partition $\lambda=(5,4,4,3,1,1)$ we associate the pyramid $p_{\lambda}(-5)=p_{\lambda}(-4)=p_{\lambda}(-3)=1, p_{\lambda}(-2)=2, p_{\lambda}(-1)= p_{\lambda}(0)=p_{\lambda}(1)=3, p_{\lambda}(2)=2, p_{\lambda}(3)=p_{\lambda}(4)=1$.
	\begin{align}
		\begin{tikzpicture}
		\draw[gray,very thin](-6,0) -- (-3.5,0);
		\draw[gray,very thin](-6,0.5) -- (-3.5,0.5);
		\draw[gray,very thin](-6,1) -- (-4,1);
		\draw[gray,very thin](-6,1.5) -- (-4,1.5);
		\draw[gray,very thin](-6,2) -- (-4.5,2);
		\draw[gray,very thin](-6,2.5) -- (-5.5,2.5);
		\draw[gray,very thin](-6,3) -- (-5.5,3);
		\draw[gray,very thin](-6,0) -- (-6,3);
		\draw[gray,very thin](-5.5,0) -- (-5.5,3);
		\draw[gray,very thin](-5,0) -- (-5,2);
		\draw[gray,very thin](-4.5,0) -- (-4.5,2);
		\draw[gray,very thin](-4,0) -- (-4,1.5);
		\draw[gray,very thin](-3.5,0) -- (-3.5,0.5);
		\draw[step=0.5cm,gray,very thin] (-0.5,0) grid (4.5,0.5);
		\draw[step=0.5cm,gray,very thin] (1,0.5) grid (3.5,1);
		\draw[step=0.5cm,gray,very thin] (1.5,1) grid (3,1.5);
		\draw[gray,very thin] (1,0.5) -- (1,1);
		\draw[gray,very thin] (1.5,1) -- (1.5,1.5);
		\draw (2,0) circle (0) node[anchor=north] {\tiny{$0$}};
		\draw (2.5,0) circle (0) node[anchor=north] {\tiny{$1$}};
		\draw (1.5,0) circle (0) node[anchor=north] {\tiny{$-1$}};
		\draw (3,0) circle (0) node[anchor=north] {\tiny{$2$}};
		\draw (1,0) circle (0) node[anchor=north] {\tiny{$-2$}};
		\draw (3.5,0) circle (0) node[anchor=north] {\tiny{$3$}};
		\draw (0.5,0) circle (0) node[anchor=north] {\tiny{$-3$}};
		\draw (4,0) circle (0) node[anchor=north] {\tiny{$4$}};
		\draw (0,0) circle (0) node[anchor=north] {\tiny{$-4$}};
		\draw (-0.5,0) circle (0) node[anchor=north] {\tiny{$-5$}};
		\draw (-2.25,1.5) circle (0) node[anchor=north] {\tiny{$\longleftrightarrow$}};
		\draw (-5.75,0.4) circle (0) node[anchor=north] {\tiny{$0$}};
		\draw (-5.25,0.9) circle (0) node[anchor=north] {\tiny{$0$}};
		\draw (-4.75,1.4) circle (0) node[anchor=north] {\tiny{$0$}};
		\draw (-5.25,0.4) circle (0) node[anchor=north] {\tiny{$1$}};
		\draw (-4.75,0.9) circle (0) node[anchor=north] {\tiny{$1$}};
		\draw (-4.25,1.4) circle (0) node[anchor=north] {\tiny{$1$}};
		\draw (-5.75,0.9) circle (0) node[anchor=north] {\tiny{$-1$}};
		\draw (-5.25,1.4) circle (0) node[anchor=north] {\tiny{$-1$}};
		\draw (-4.75,1.9) circle (0) node[anchor=north] {\tiny{$-1$}};
		\draw (-5.75,1.4) circle (0) node[anchor=north] {\tiny{$-2$}};
		\draw (-5.75,1.9) circle (0) node[anchor=north] {\tiny{$-3$}};
		\draw (-5.75,2.4) circle (0) node[anchor=north] {\tiny{$-4$}};
		\draw (-5.75,2.9) circle (0) node[anchor=north] {\tiny{$-5$}};
		\draw (-5.25,1.9) circle (0) node[anchor=north] {\tiny{$-2$}};
		\draw (-4.75,0.4) circle (0) node[anchor=north] {\tiny{$2$}};
		\draw (-4.25,0.4) circle (0) node[anchor=north] {\tiny{$3$}};
		\draw (-3.75,0.4) circle (0) node[anchor=north] {\tiny{$4$}};
		\draw (-4.25,0.9) circle (0) node[anchor=north] {\tiny{$2$}};
		\end{tikzpicture}
	\end{align}
	Pictorially, the bijection amounts to folding clockwise by $90$ degrees the part of the pyramid which lies over $-\N$ (and shifting accordingly each layer of the pyramid to the right by a number of boxes equal to its height). Conversely, starting from a partition, one may tilt its Young diagram 45 degrees to the left (so that it stands on its corner) and let gravity act to obtain the associated pyramid.\hfill$\triangle$
\end{ex}

It is easy to translate several classical notions from partitions to pyramids. For instance,
\begin{align}\label{eq:lengthpyr}
	\ell(\lambda)&=\sup\,\{n\,\vert\, p_{\lambda}(-n) \neq 0\}\ ,\\
	\label{eq:transposepyr}
	p_{\lambda'}&=p_{\lambda} \circ \iota \ ,
\end{align}
where $\lambda'$ is the transpose partition of $\lambda$ and $\iota \colon \Z \to \Z$ sends $n$ to $-n$, and
\begin{align}\label{eq:sizepyr}
	\vert \lambda \vert = \int p_{\lambda}\coloneqq\sum_n p(n)\ .
\end{align}
The dominance ordering is less pleasant. It translates into the following set of inequalities:
\begin{align}\label{eq:dominancepyr}
	p \geq q \qquad \Leftrightarrow \qquad \int \inf (p, \kappa_n) \geq \int \inf (q,\kappa_n) \quad \text{for all $n$} \ ,
\end{align}
where 
\begin{align}
	\kappa_n(i)\coloneqq
	\begin{cases}
		n & \text{if } i \geq 0\ ,\\ 
		n+i & \text{if }  -n < i <0\ ,\\ 
		0 & \text{if }  i \leq -n\ .
	\end{cases}
\end{align}

\subsection{Rational and real pyramids}

We now generalize the concept of pyramids to allow for ``jumps" of the function $p$ located at non-integral points. More precisely, let us introduce the following definition.
\begin{definition}
A \emph{rational} (resp.\ \emph{real}) \emph{pyramid} is defined to be a function $p\colon \Q \to \N$ (resp.\ a function $p\colon \R \to \N$) satisfying the following properties (we set $\K=\Q, \R$ accordingly):
\begin{enumerate}\itemsep0.2cm
	\item[i)] $p(x)=0$ for $\vert x\vert \gg 0$;
	\item[ii)] $p(0) =\max \{p(x)\,\vert \, x \in \K\}$;
	\item[iii)] $p$ is increasing on $\K_{-}$ and decreasing on $\K_+$;
	\item[iv)] $p$ is right--continuous and piecewise constant, with finitely many points of discontinuity;
	\item[v)] $\vert p_+(x)-p_{-}(x)\vert  \leq 1$ for all $x \in \K$.
\end{enumerate}
\end{definition}
We extend the notions of length, transpose and size to rational or real pyramids using \eqref{eq:lengthpyr}, \eqref{eq:transposepyr}, \eqref{eq:sizepyr} respectively. The notion of dominance is also extended accordingly, where we now require the inequality \eqref{eq:dominancepyr} for any $n \in \K$. 

We may still represent rational or real pyramids as diagrams (now simply the graph of the function $p$). On the other hand, it is not possible to represent rational or real pyramids as Young diagrams anymore, as the operation of ``folding clockwise by $90$ degrees" does not make sense. We denote by $\mathsf{Pyr}(\K)$ the set of all $\K$-pyramids. We will sometimes also denote by $\mathsf{Pyr}(\K)(u)$ the set of $\K$-pyramids with size $u$.

\subsection{Addable and removable intervals}

Fix $\K \in \{\Z, \Q, \R\}$.
\begin{definition}
	We call a  \emph{$\K$-interval} a closed-open interval of the form $J=[a,b[\coloneqq \{x\in \R\;\vert\; a\leq x<b\}$ with $a,b\in \K$. We denote by $\cf_J$ the \emph{characteristic function} of $J$.
\end{definition}

\begin{definition}
	Let $p$ be a $\K$-pyramid. A $\K$-interval $J$ is called \emph{addable} (resp.\ \emph{removable}) if $p+\cf_J$ (resp.\ $p-\cf_J$) is still a $\K$-pyramid. 
\end{definition}

\begin{remark}
	When $\K=\Z$, as proved in Lemma~\ref{lem:bijection} a pyramid $p$ corresponds to a partition $\lambda_p$. A length one $\Z$-interval $J$ gives rise to a box $s\coloneqq (1+p(a), 1+p(a)-a)$. Hence $J$ is addable (resp.\ removable) if $s$ is addable (resp.\ removable) in the sense defined in Section~\ref{ss:partitions}. More generally, when $J$ is of arbitrary length, it corresponds to a connected strip.
\end{remark}

\begin{example}
Here is an example of an addable $\Z$-interval and the corresponding strip:
\begin{align}
\begin{tikzpicture}
\draw [fill=lightgray,very thin] (-6,2.5) rectangle (-5.5,3);
\draw [fill=lightgray,very thin] (-6,2) rectangle (-4.5,2.5);
\draw [fill=lightgray,very thin] (-5,1.5) rectangle (-4.5,2);
\draw [fill=lightgray,very thin] (-0.5,0) rectangle (0.5,0.5);
\draw [fill=lightgray,very thin] (0.5,0.5) rectangle (1,1);
\draw [fill=lightgray,very thin] (1,1) rectangle (2,1.5);
\draw[gray,very thin](-6,0) -- (-3.5,0);
\draw[gray,very thin](-6,0.5) -- (-3.5,0.5);
\draw[gray,very thin](-6,1) -- (-4,1);
\draw[gray,very thin](-6,1.5) -- (-4,1.5);
\draw[gray,very thin](-6,2) -- (-4.5,2);
\draw[gray,very thin](-6,2.5) -- (-4.5,2.5);
\draw[gray,very thin](-6,3) -- (-5.5,3);
\draw[gray,very thin](-6,0) -- (-6,3);
\draw[gray,very thin](-5.5,0) -- (-5.5,3);
\draw[gray,very thin](-5,0) -- (-5,2.5);
\draw[gray,very thin](-4.5,0) -- (-4.5,2.5);
\draw[gray,very thin](-4,0) -- (-4,1.5);
\draw[gray,very thin](-3.5,0) -- (-3.5,0.5);
\draw[step=0.5cm,gray,very thin] (-0.5,0) grid (4.5,0.5);
\draw[step=0.5cm,gray,very thin] (0.5,0.5) grid (3.5,1);
\draw[step=0.5cm,gray,very thin] (1,1) grid (3,1.5);
\draw[gray,very thin] (1,0.5) -- (1,1);
\draw[gray,very thin] (1,1) -- (1,1.5);
\draw[gray,very thin] (1.5,1) -- (1.5,1.5);
\draw (2,0) circle (0) node[anchor=north] {\tiny{$0$}};
\draw (2.5,0) circle (0) node[anchor=north] {\tiny{$1$}};
\draw (-5.25,2.4) circle (0) node[anchor=north] {\tiny{$-3$}};
\draw (-4.75,2.4) circle (0) node[anchor=north] {\tiny{$-2$}};
\draw (1.5,0) circle (0) node[anchor=north] {\tiny{$-1$}};
\draw (3,0) circle (0) node[anchor=north] {\tiny{$2$}};
\draw (1,0) circle (0) node[anchor=north] {\tiny{$-2$}};
\draw (3.5,0) circle (0) node[anchor=north] {\tiny{$3$}};
\draw (0.5,0) circle (0) node[anchor=north] {\tiny{$-3$}};
\draw (4,0) circle (0) node[anchor=north] {\tiny{$4$}};
\draw (0,0) circle (0) node[anchor=north] {\tiny{$-4$}};
\draw (-0.5,0) circle (0) node[anchor=north] {\tiny{$-5$}};
\draw (-2,1.5) circle (0) node[anchor=north] {\tiny{$\longleftrightarrow$}};
\draw (-5.75,0.4) circle (0) node[anchor=north] {\tiny{$0$}};
\draw (-5.25,0.9) circle (0) node[anchor=north] {\tiny{$0$}};
\draw (-4.75,1.4) circle (0) node[anchor=north] {\tiny{$0$}};
\draw (-5.25,0.4) circle (0) node[anchor=north] {\tiny{$1$}};
\draw (-4.75,0.9) circle (0) node[anchor=north] {\tiny{$1$}};
\draw (-4.25,1.4) circle (0) node[anchor=north] {\tiny{$1$}};
\draw (-5.75,0.9) circle (0) node[anchor=north] {\tiny{$-1$}};
\draw (-5.25,1.4) circle (0) node[anchor=north] {\tiny{$-1$}};
\draw (-4.75,1.9) circle (0) node[anchor=north] {\tiny{$-1$}};
\draw (-5.75,1.4) circle (0) node[anchor=north] {\tiny{$-2$}};
\draw (-5.75,1.9) circle (0) node[anchor=north] {\tiny{$-3$}};
\draw (-5.75,2.4) circle (0) node[anchor=north] {\tiny{$-4$}};
\draw (-5.75,2.9) circle (0) node[anchor=north] {\tiny{$-5$}};
\draw (-5.25,1.9) circle (0) node[anchor=north] {\tiny{$-2$}};
\draw (-4.75,0.4) circle (0) node[anchor=north] {\tiny{$2$}};
\draw (-4.25,0.4) circle (0) node[anchor=north] {\tiny{$3$}};
\draw (-3.75,0.4) circle (0) node[anchor=north] {\tiny{$4$}};
\draw (-4.25,0.9) circle (0) node[anchor=north] {\tiny{$2$}};
\end{tikzpicture}
\end{align}

and here is an example of an addable $\Q$ (or $\R$)-interval (in this case, $J=[-\frac{11}{5}, \frac{2}{5}[$):
\begin{align}
\begin{tikzpicture}
\draw [fill=lightgray,very thin] (-0.5,0) rectangle (0.7,0.5);
\draw [fill=lightgray,very thin] (0.7,0.5) rectangle (1,1);
\draw [gray,very thin] (0.7,0) rectangle (4.2,0.5);
\draw [fill=lightgray,very thin] (1,1) rectangle (2.1,1.5);
\draw [gray,very thin] (1,0.5) rectangle (3.7,1);
\draw[dotted] (1.7,0) -- (1.7,1.5);
\draw[dotted] (0.7,0) -- (0.7,0.5);
\draw[dotted] (1,0) -- (1,1);
\draw[dotted] (2.1,0) -- (2.1,1.5);
\draw[dotted] (3.7,0) -- (3.7,1);
\draw (1.7,0) circle (0) node[anchor=north] {\tiny{$0$}};
\draw (-0.5,0) circle (0) node[anchor=north] {\tiny{$-\frac{11}{5}$}};
\draw (0.5,0) circle (0) node[anchor=north] {\tiny{$-1$}};
\draw (1,0) circle (0) node[anchor=north] {\tiny{$-\frac{7}{10}$}};
\draw (2.1,0) circle (0) node[anchor=north] {\tiny{$\frac{2}{5}$}};
\draw (3.7,0) circle (0) node[anchor=north] {\tiny{$2$}};
\draw (4.2,0) circle (0) node[anchor=north] {\tiny{$\frac{5}{2}$}};
\end{tikzpicture}
\end{align}
Note that we could not have added $[-\frac{11}{5},\frac{5}{2}[$ since there would be a jump of $2$ over $\frac{5}{2}$, nor could we have added the interval $[-\frac{11}{5}, 0[$ since $0$ would not be the maximum anymore.
\end{example}

\begin{definition}
	Let $p$ be a $\K$-pyramid. We put
	\begin{align}
		D_\K(p)\coloneqq \{y \in \K\, \vert\, y \text{ is a point of discontinuity of } p\}\ .
	\end{align}
and call this the \emph{set of discontinuities of $p$}.
\end{definition}

\begin{remark}\label{rem:action-nonzero}
	Let $J=[a,b[$ be a $\K$-interval and let $p$ be a $\K$-pyramid. Then $p+\cf_J$ is a pyramid if and only if one of the following mutually exclusive cases occurs
	\begin{itemize}\itemsep0.2cm
		\item $a\in D_\K(p)$ if $0<a$;
		\item $a,b \notin D_\K(p)$ if $a\leq 0 <b$;
		\item $b\in D_\K(p)$ if $b<0$.
	\end{itemize}
	Similarly, $p-\cf_J$ is a pyramid if and only if one the following mutually exclusive cases occurs
	\begin{itemize}\itemsep0.2cm
		\item $b\in D_\K(p)$ if $0<a$;
		\item $a,b \in D_\K(p)$ if $a\leq 0 <b$;
		\item $a\in D_\K(p)$ if $b<0$.
	\end{itemize}
\end{remark}

Let $p$ be a $\K$-pyramid and let $J$ be an addable interval. We define the \emph{$p$-height} of $J$ as 
\begin{align}
	\mathsf{ht}_p(J)\coloneqq \sup\,\{\vert p(x)-p(y)\vert \,\vert \, x,y \in J\}\ .
\end{align}
We will also consider the variants 
\begin{align}
	\mathsf{ht}^{\pm}_p(J)\coloneqq \sup\,\{\vert p(x)-p(y)\vert \,\vert\, x,y \in J \cap \K_{\pm}\}\ ,
\end{align}
so that $\mathsf{ht}_p(J)=\max\{ \mathsf{ht}^+_p(J), \mathsf{ht}^-_p(J)\}$. 

\begin{example}
In the last example above, we have, for $J=[-\frac{11}{5},\frac{5}{2}[$,
\begin{align}
	\mathsf{ht}_p(J)=2\ , \quad \mathsf{ht}^-_p(J)=2\ , \quad \mathsf{ht}^+_p(J)=0 \ .
\end{align}
\end{example}

\bigskip\section{Recollections on the standard Fock space representation}\label{s:Fockspace}

\subsection{Type A quantum groups }

Let $\bfU_\upsilon(\fraksl(\infty))$ (resp.\ $\bfU_\upsilon(\hatfraksl(n))$) be the \emph{quantized enveloping algebra of $\fraksl(\infty)$} (resp.\ \emph{of $\hatfraksl(n)$}), i.e., the unital associative $\widetilde{\Q}$-Hopf algebra generated by $E_i, F_i, K_i^{\pm 1}$, for $i\in \Z$ (resp.\ $i\in \Z/n\Z$), subject to the Drinfeld-Jimbo type relations
\begin{align}
K_i\, K_i^{-1}=1=K_i^{-1}\, K_i\ , \quad K_i\, K_j=K_j\, K_i \ , 
\end{align}
\begin{align}
K_i E_j K_i^{-1}= \upsilon^{a_{i,j}} E_j\ , \quad K_i F_j K_i^{-1}= \upsilon^{-a_{i,j}} F_j\ , \quad [E_i, F_j]=\delta_{i,j}\frac{K_i-K_i^{-1}}{\upsilon-\upsilon^{-1}}\ ,
\end{align}
\begin{align}
\sum_{k=0}^{1-a_{i,j}}\, (-1)^k E_i^{(k)}\, E_j E_i^{(1-a_{i,j}-k)}=0=\sum_{k=0}^{1-a_{i,j}}\, (-1)^k F_i^{(k)}\, F_j F_i^{(1-a_{i,j}-k)}\quad\text{if $i\neq j$} \ ,
\end{align}
where\footnote{For $i, j\in \Z/n\Z$, we interpret $\vert i-j\vert$ modulo $n$.}
\begin{align}
a_{i,j}\coloneqq\begin{cases}
2 & \text{if $i=j$}\ ,\\
-1 & \text{if $\vert i-j\vert =1$}\ , \\
0 & \text{otherwise} \ ,
\end{cases}
\end{align}
and
\begin{align}
E_i^{(k)}\coloneqq \frac{E_i^k}{[k]!}\quad\text{and}\quad F_i^{(k)}\coloneqq \frac{F_i^k}{[k]!}\ .
\end{align}
The coproduct is
\begin{align}
\Delta(K_i)=K_i\otimes K_i\ , \quad \Delta(E_i)=E_i\otimes 1+K_i\otimes E_i\ , \quad \Delta(F_i)=F_i\otimes K_i^{-1}+1\otimes F_i \ .
\end{align}

\subsection{Fock spaces for type A quantum groups}

Set
\begin{align}
\calF_\Z\coloneqq  \bigoplus_{\lambda\in\mathsf{Pyr}(\Z)}\,  \widetilde \Q \vert \lambda\rangle\ .
\end{align}
The action of $\bfU_\upsilon(\fraksl(\infty))$ on $\calF_\Z$ given by
\begin{align}
E_i\vert \lambda \rangle&\coloneqq \begin{cases}
\vert \nu \rangle & \text{if $Y_\lambda\setminus Y_\nu$ is a box with color $i$}\ , \\[4pt]
0 & \text{otherwise} \ ,
\end{cases}\\[6pt]
F_i\vert \lambda \rangle&\coloneqq\begin{cases}
\vert \mu \rangle & \text{if $Y_\mu\setminus Y_\lambda$ is a box with color $i$} \ ,\\[4pt]
0 & \text{otherwise}\ ,
\end{cases}\\[6pt]
K_i\vert \lambda \rangle&\coloneqq\upsilon^{n_i(\lambda)}\vert \lambda \rangle
\end{align}
for $i\in \Z$. 

Following \cite{art:hayashi1990, art:misramiwa1990}, the action of $\bfU_\upsilon(\hatfraksl(n))$ on $\calF_\Z$ is defined via the ``folding procedure'' as
\begin{align}
E_{\bar{\imath}}\vert \lambda \rangle&\coloneqq \sum_{j\in \Z\,:\, j=\bar{\imath}\bmod{n}}\, \upsilon^{-n_j^{-}(\lambda)}\, E_j \vert \lambda \rangle\ , \\[6pt]
F_{\bar{\imath}}\vert \lambda \rangle&\coloneqq  \sum_{j\in \Z\,:\, j=\bar{\imath}\bmod{n}}\, \upsilon^{n_j^{+}(\lambda)}\, F_j \vert \lambda \rangle\ , \\[6pt]
K_{\bar{\imath}}\vert \lambda \rangle&\coloneqq \upsilon^{n_{\bar{\imath}}(\lambda)}\vert \lambda \rangle
\end{align}
for $\bar{\imath}\in \Z/n\Z$. Here
\begin{align}
	n_{\bar{\imath}}(\lambda)\coloneqq \sum_{j=\bar{\imath} \bmod{n}}\, n_j(\lambda)\ , \quad n_j^{-}(\lambda)\coloneqq \sum_{k<j\,:\, k=j\bmod{n}}\, n_k(\lambda)\ ,\quad n_j^{+}(\lambda)\coloneqq \sum_{k>j\,:\, k=j\bmod{n}}\, n_k(\lambda) \ .
\end{align}

\subsection{Varagnolo-Vasserot construction}\label{ss:VVbusiness}

For future use, we now give, following Varagnolo and Vasserot \cite{art:varagnolovasserot1999}, a geometric interpretation of the above formulas. This uses the notions of Hall algebras and quiver representations which we first briefly recall (see \cite{art:schiffmann-lectures} for details). 

A quiver is an oriented graph $\calQ=(I,H)$ where $I$ stands for the set of vertices and $H$ for the set of oriented edges; for $h \in H$, we denote by $h'$, resp. $h''$ its source and target vertices. A representation of $\calQ$ over a field $k$ is by definition the data of an $I$-graded finite-dimensional $k$-vector space $V=\bigoplus_i V_i$ together with a collection of linear maps $x_h \in \mathsf{Hom}(V_{h'},V_{h''})$ for $h \in H$. The dimension of a representation $M=(V,(x_h)_h)$ is defined as $\underline{\dim}(M)=(\dim(V_i))_i \in \N^I$. Representations of a quiver $\calQ$ over a field $k$ form an abelian category $\Rep_k\calQ$, which is known to be \emph{hereditary}, which means that $\mathsf{Ext}^i(M,N)=\{0\}$ for any pair of representations $(M,N)$ and any $i >1$.  The Euler form 
\begin{align}
	\langle\cdot ,\cdot\rangle\colon \sfK_0(\Rep_k\calQ)\otimes  \sfK_0(\Rep_k\calQ) \to \Z\ ,\quad  M\otimes N \mapsto \dim(\mathsf{Hom}(M,N))- \dim(\mathsf{Ext}^1(M,N))
\end{align}
factors through the dimension map and is explicitly given by
\begin{align}
	\langle\cdot ,\cdot\rangle\colon \Z^I \otimes \Z^I \to \Z\ , \quad \langle d,d'\rangle=\sum_i d_i d'_i - \sum_h d_{h'}d'_{h''}\ .
\end{align}
To every vertex $i$ there corresponds a simple one-dimensional representation $S_i$, supported at the vertex $i$ and with all linear zero maps $x_h$ equal to zero.

Another point of view, better suited for our purposes, involves the path algebra $k\calQ$: this is the algebra with basis given by the set of oriented paths and multiplication given by concatenation of paths (whenever possible, zero otherwise). The algebra $k\calQ$ has idempotents $e_i$, for $i \in I$, corresponding to length zero paths starting and ending at a vertex $i$. One can easily check that $k\calQ$ is generated by the collection of idempotents $e_i, i\in I$ and the length one paths $h \in H$, and that the assignment $M \mapsto (\bigoplus_i e_i M, \rho_M(h)_h)$ defines an equivalence between the category of finite-dimensional $k\calQ$-modules and the category of representations of $\calQ$ over $k$. Here $\rho_M\colon k\calQ\to \mathsf{End}(M)$ is the map defining the $k\calQ$-module structure of $M$.

From now on, we fix $k$ to be a finite field $\mathbb{F}_q$ and set $\upsilon\coloneqq q^{1/2}$. Let $\calM_\calQ$ denote the set of all isomorphism classes of objects of $\Rep_k\calQ$. As vector space, the Hall algebra $\bfH_{\calQ}$ is 
\begin{align}
	\bfH_{\calQ}\coloneqq \{ h\colon \calM_\calQ \to \C\; |\; |\mathsf{supp}(h)| <\infty\}\ .
\end{align}
We will denote by $[V]$ the characteristic function of an object $V$. The function $\underline{\dim}$ induces a natural gradation
\begin{align}\label{E:defHall1}
\bfH_{\calQ}=\bigoplus_{d \in \N^I} \bfH_\calQ[d]
\end{align}
with finite dimensional graded pieces. Note that when $I$ is infinite, $\bfH_\calQ[d]=0$ whenever $d$ has infinitely many nonzero components. We denote by $\Z_0^I \subset \Z^I$ the subset of vectors having finitely many nonzero components.

The multiplication in $\bfH_{\calQ}$ is defined as follows. For $M \in \bfH_{\calQ}[d]$ and $M' \in \bfH_{\calQ}[d']$ we set
\begin{align}
	[M] \star [M'] \coloneqq \upsilon^{\langle d, d'\rangle}\sum_{N} P^{N}_{M,M'} [N]
\end{align}
where $N$ ranges over the (finite) set of extensions of $M'$ by $M$ and where
\begin{align}
	P^N_{M,M'}\coloneqq |\{ L \subset N\;|\; L \simeq M', N/L \simeq M\}|\ .
\end{align}
Note that the multiplication is graded with respect to the decomposition \eqref{E:defHall1}. We let $\bfH^{\mathsf{sph}}_{\calQ}$ be the subalgebra of $\bfH_{\calQ}$ generated by the elements $[S_i]$ for $i \in I$. 

We will only be interested here in the following three types of quivers : the equioriented finite type $A$ quivers $A_n$, which has as vertex set ${1, \ldots, n-1}$ and as edge set $i\mapsto i+1$ for $i =1, \ldots,  n-2$; the infinite quiver $A_\infty$ with vertex set $\Z$ and edge set $i \mapsto i+1$; and the cyclically oriented affine type A quiver $A_{n}^{(1)}$ which has as vertex set $\Z/n\Z$ and edge set $i\mapsto i+1$ for $i \in \Z/n\Z$.  

The following result is well-known and due to Ringel.
\begin{theorem}
\hfill
\begin{itemize}\itemsep0.2cm
	\item[(i)] Assume that $\calQ =A_\star$ with $\star \in \N\cup \{\infty\}$. The assignment $[S_{i}] \mapsto \upsilon^{-1/2}E_i$ for all vertices $i$ defines an isomorphism of graded algebras $\bfH_{\calQ} \simeq \bfU^+_{\upsilon}(\fraksl(\star))$.
	\item[(ii)] Assume that $\calQ=A_n^{(1)}$. The assignment $[S_i] \mapsto \upsilon^{-1/2}E_i$ for all vertices $i$ defines an isomorphism of graded algebras $\bfH^{\mathsf{sph}}_{\calQ} \simeq \bfU^+_{\upsilon}(\widehat{\fraksl}(n))$. 
\end{itemize}	
\end{theorem}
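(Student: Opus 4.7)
I would follow the classical Ringel--Green approach: construct a graded algebra homomorphism $\Phi \colon \bfU^+_\upsilon(\fraksl(\star)) \to \bfH_\calQ$ (respectively $\Phi \colon \bfU^+_\upsilon(\widehat{\fraksl}(n)) \to \bfH^{\mathsf{sph}}_\calQ$ in part (ii)) sending $E_i \mapsto \upsilon^{-1/2}[S_i]$, verify that $\Phi$ is a well-defined surjection by checking the defining relations in the Hall algebra, and then prove injectivity by matching graded bases on both sides.

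\emph{Well-definedness.} I would compute the products $[S_i]\star[S_j]$ in $\bfH_\calQ$ case by case. When $|i-j|\geq 2$ the simples $S_i$ and $S_j$ satisfy $\mathsf{Ext}^1(S_i,S_j)=\mathsf{Ext}^1(S_j,S_i)=0$, so both products equal $[S_i\oplus S_j]$ up to an Euler form twist, and commutativity $[S_i]\star[S_j]=[S_j]\star[S_i]$ follows. When $|i-j|=1$ there is (up to isomorphism) a unique non-split extension $M_{[i,j]}$ of length two, and a direct bookkeeping computation of the structure constants $P^N_{M,M'}$ in the length-three products yields the quantum Serre identity $[S_i]\star[S_i]\star[S_j] - (\upsilon+\upsilon^{-1})[S_i]\star[S_j]\star[S_i] + [S_j]\star[S_i]\star[S_i] = 0$. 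Since the commutation and Serre relations generate the defining ideal of $\bfU^+_\upsilon$, $\Phi$ descends to a homomorphism of graded algebras.

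\emph{Surjectivity and injectivity.} For part (i), Gabriel's theorem (and a direct inspection in the case $\star=\infty$) classifies the indecomposables of $A_\star$ as interval modules $M_{[a,b]}$ with $a\leq b$, each equipped with the canonical filtration by the simples $S_a, S_{a+1}, \dots, S_b$. Hence every isoclass $[M]$ is a $\widetilde{\Q}$-linear combination of Hall products of $[S_i]$'s, giving surjectivity. Injectivity would follow by comparing a Lusztig PBW basis of $\bfU^+_\upsilon(\fraksl(\star))$ associated to a convex order on the positive roots with the Krull--Schmidt basis of $\bfH_\calQ$ indexed by partition-valued functions on positive roots (which, by Gabriel's theorem, correspond to intervals); Auslander--Reiten theory provides the requisite triangular change-of-basis matrix. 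For part (ii), surjectivity onto $\bfH^{\mathsf{sph}}_\calQ$ is tautological from the definition of the spherical subalgebra, and injectivity is then established by comparing PBW-type bases as above.

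\emph{Main obstacle.} The most delicate step is injectivity in part (ii). The category $\Rep_k A^{(1)}_n$ contains tubes of regular representations whose isoclasses do \emph{not} lie in $\bfH^{\mathsf{sph}}_\calQ$; one must identify precisely which elements of $\bfH_\calQ$ belong to the spherical subalgebra and match them to Beck's PBW basis of $\bfU^+_\upsilon(\widehat{\fraksl}(n))$, labelled by \emph{aperiodic} multipartitions of positive roots. This is the content of the affine Ringel--Green theorem, which I would invoke as a black box rather than reprove from scratch; by contrast, the finite-type case (i) reduces to an elementary bookkeeping exercise once Gabriel's theorem is in hand.
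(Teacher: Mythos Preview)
The paper does not actually prove this theorem: it is stated as ``well-known and due to Ringel'' and no proof is given. Your proposal is a correct outline of the classical Ringel--Green argument, so there is nothing to compare against on the paper's side.

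Two minor remarks on your sketch. In part~(i) the Krull--Schmidt basis of $\bfH_{A_\star}$ is indexed by $\N$-valued (not partition-valued) functions on positive roots, since each indecomposable is rigid; partition-valued data only enter in the affine case via the homogeneous tube. In part~(ii) your identification of the obstacle is exactly right: the spherical subalgebra is spanned by the classes of \emph{aperiodic} nilpotent representations, and matching this with Beck's PBW basis is the substance of the affine Ringel--Green theorem. Invoking it as a black box is entirely appropriate here, and is precisely what the paper does.
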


We will need a slight variant of the above result, involving $\bfU^-_{\upsilon}$ instead of $\bfU^+_{\upsilon}$. 
\begin{corollary}\label{c:Ringel2}
\hfill
\begin{itemize}\itemsep0.2cm
	\item[(i)] Assume that $\calQ =A_\star$ with $\star \in \N\cup \{\infty\}$. The assignment $[S_{i}] \mapsto -\upsilon^{-1/2}F_i$ for all vertices $i$ defines an isomorphism of graded algebras $\bfH_{\calQ} \simeq \bfU^-_{\upsilon}(\fraksl(\star))$.
	\item[(ii)] Assume that $\calQ=A_n^{(1)}$. The assignment $[S_i] \mapsto -\upsilon^{-1/2}F_i$ for all vertices $i$ defines an isomorphism of graded algebras $\bfH^{\mathsf{sph}}_{\calQ} \simeq \bfU^-_{\upsilon}(\widehat{\fraksl}(n))$. 
\end{itemize}	
\end{corollary}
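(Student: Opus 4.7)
The strategy is to deduce the corollary from the preceding theorem of Ringel by composing with an algebra isomorphism $\omega\colon \bfU^+_\upsilon(\frakg) \to \bfU^-_\upsilon(\frakg)$ sending $E_i \mapsto -F_i$, where $\frakg$ denotes $\fraksl(\star)$ in case (i) and $\widehat{\fraksl}(n)$ in case (ii). Since the Ringel isomorphism identifies $[S_i]$ with $\upsilon^{-1/2}E_i$, post-composing with $\omega$ automatically produces the desired assignment $[S_i]\mapsto -\upsilon^{-1/2} F_i$.

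To construct $\omega$, I would first recall that $\bfU^-_\upsilon(\frakg)$ admits a presentation by generators $F_i$ subject to exactly the same $q$-Serre relations as those defining $\bfU^+_\upsilon(\frakg)$ in terms of the $E_i$; this is built into the Drinfeld--Jimbo presentation recalled in the previous subsection. It therefore suffices to verify that $E_i \mapsto -F_i$ preserves the quantum Serre relations. The key observation is that each summand
\begin{align}
(-1)^k E_i^{(k)}\, E_j\, E_i^{(1-a_{i,j}-k)}
\end{align}
of the Serre polynomial is homogeneous of total degree $1-a_{i,j}$ in $E_i$ and of degree $1$ in $E_j$, so after the substitution every term acquires the same overall factor $(-1)^{2-a_{i,j}}$, which can be pulled out of the sum, recovering the corresponding Serre relation for the $F_i$'s. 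Thus $\omega$ is a well-defined algebra homomorphism; the symmetric assignment $F_i\mapsto -E_i$ provides a two-sided inverse, so $\omega$ is in fact an isomorphism. Since $\omega$ sends each generator $E_i$ to a graded element of the same $\Z^I_0$-degree, the isomorphism is graded.

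With $\omega$ in hand, part (i) is immediate: compose the graded isomorphism $\bfH_\calQ \simeq \bfU^+_\upsilon(\fraksl(\star))$ of the theorem with $\omega$. For part (ii) one uses in addition that $\omega$ carries the subalgebra of $\bfU^+_\upsilon(\widehat{\fraksl}(n))$ generated by the $E_i$ isomorphically onto the subalgebra of $\bfU^-_\upsilon(\widehat{\fraksl}(n))$ generated by the $F_i$, so that the image of $\bfH^{\mathsf{sph}}_{A_n^{(1)}}$ under the composed map is precisely the latter subalgebra, usually taken as the definition of $\bfU^-_\upsilon(\widehat{\fraksl}(n))$.

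The only step requiring any attention is the Serre relation verification, but as shown above it reduces to the trivial observation that a common sign $(-1)^{2-a_{i,j}}$ factors out of the Serre polynomial; the substantial content lies entirely in Ringel's theorem already cited, so no genuine obstacle arises.
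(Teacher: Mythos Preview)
Your proposal is correct and follows exactly the same approach as the paper: the paper's entire proof is the single sentence ``This comes from the algebra isomorphism $\bfU^+_\upsilon \simeq \bfU^-_{\upsilon}$ given by $E_i \mapsto -F_i$.'' You have simply unpacked that sentence by explicitly verifying that the Serre relations are preserved (via the common sign $(-1)^{2-a_{i,j}}$) and then composing with Ringel's theorem, which is precisely the intended argument.
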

\begin{proof} 
	This comes from the algebra isomorphism $\bfU^+_\upsilon \simeq \bfU^-_{\upsilon}$ given by $E_J \mapsto -F_J$.
\end{proof}

Because of the relation with quantum groups, it is customary to consider the \emph{twisted} Hall algebra, which is defined as a semi-direct product
\begin{align}
	\bfH^{\mathsf{tw}}_{\calQ} \coloneqq  \bfH^0_{\calQ} \ltimes \bfH_{\calQ}\ ,
\end{align}
where 
\begin{align}
	\bfH^0_\calQ\coloneqq \C[\sfK_0(\Rep_k\calQ)]
\end{align}
is the group algebra of $\sfK_0(\Rep_k\calQ)$. Thus $\bfH^0_\calQ$ has a basis $\{K_d\}$ where $d$ runs among the sets $\Z^n$, $\Z_0^\Z$ or $\Z^{\Z/n\Z}$ for $\calQ=A_n$, $\calQ=A_\infty$ or $\Q=A_n^{(1)}$ respectively. The action of $\mathbf{H}^0_\calQ$ on $\mathbf{H}_\calQ$ is given by
\begin{align}
	K_d\, x\, K_d^{-1}=\upsilon^{(d,l)}x \quad \textit{for all } x \in \bfH_\calQ[l]\ .
\end{align}

We are now ready to recall Varagnolo and Vasserot's geometric interpretation of Hayashi's folding procedure\footnote{Note, however, that \cite{art:varagnolovasserot1999} use the opposite multiplication for Hall algebras and work over a field $\F_{q^2}$.} setting of \cite[Proposition~6.1]{art:varagnolovasserot1999}. Fix some $n >1$. Let us denote by $i \mapsto \overline{i}$, resp.\ $d \mapsto \overline{d}$ the projections $\Z \to \Z/n\Z$, resp.  $\N^{\Z} \to \N^{\Z /n\Z}$. Following \cite[Section~6]{art:varagnolovasserot1999} we will now construct a family of maps 
\begin{align}
	\gamma_d\colon \bfH_{A_n^{(1)}}[\overline{d}] \to \bfH_{A_\infty}[d]
\end{align}
for every $d\in \N^{\Z}$. Fix some $d \in \N^{\Z}$ and a $\Z$-graded vector space $V=\bigoplus_\imath V_\imath$ of dimension $d$. Put:
\begin{align}
	 \overline{V}=\bigoplus_{a \in \Z/n\Z} \overline{V}_a \ ,\quad \overline{V}_a=\bigoplus_{\overline{\jmath}=a} V_\jmath\ .
\end{align}
Thus, even if they may be canonically identified, $V$ is a $\Z$-graded vector space while $\overline{V}$ is $\Z/n\Z$-graded. For any $\imath \in \Z$ set
\begin{align}
	V_{\geq \imath}\coloneqq \bigoplus_{\jmath \geq \imath} V_\jmath\ ,
\end{align}
and denote by $V_{\geq}^{\bullet}$ the associated filtration of $V$. There is an induced filtration $\overline{V}_{\geq}^{\bullet}$ of $\overline{V}$ where
\begin{align}
	\overline{V}_{\geq \imath}=\bigoplus_a \overline{V}_{\geq \imath, \, a}\quad \text{and}\quad \overline{V}_{\geq \imath,\,  a}=\bigoplus_{\jmath \geq \imath,\, \overline{\jmath}=a} V_\jmath\ .
\end{align}

The filtrations $V_{>}^{\bullet},\overline{V}_{>}^{\bullet}$ are defined in the same way, replacing $\geq$ by $>$. The associated graded space
\begin{align}
	\mathsf{gr}(\overline{V})\coloneqq \bigoplus_\imath (\overline{V}_{\geq \imath}/ \overline{V}_{>\imath})
\end{align}
is canonically isomorphic to $V$ as a $\Z$-graded vector space. 

Let $E_V, E_{\overline{V}}$ denote the vector spaces of representations of $A_\infty$ and $A_n^{(1)}$ in $V$, resp.\ $\overline{V}$. Let also $E_{V,\overline{V}} \subset E_{\overline{V}}$ be the subset of representations preserving the filtration $\overline{V}^\bullet_{\geq}$. Let us write $j \colon E_{V,\overline{V}} \to E_{\overline{V}}$ for the embedding. Taking the associated graded yields a map $p\colon E_{V,\overline{V}} \to E_V$.

We set
\begin{align}
	\gamma_d\coloneqq \upsilon^{-h(d)}\, p_! \circ j^\ast \colon  \bfH_{A_n^{(1)}}[\overline{d}]\to \bfH_{A_\infty}[d]\ ,
\end{align}
where
\begin{align}
	h(d)\coloneqq -\sum_{\ell<0} \langle d, \tau^\ell (d)\rangle = \sum_{\genfrac{}{}{0pt}{}{\imath<\jmath}{\overline{\imath}=\overline{\jmath}}}\,  d_\imath(d_{\jmath+1}-d_\jmath)\ ,
\end{align}
with $\tau\colon (d_i)_i \mapsto (d_{i-1})_i$ being the translation operator.
Finally, put
\begin{align}
	k(h,h')\coloneqq \sum_{\ell >0} (h,\tau^{\ell}(h'))=\sum_{\genfrac{}{}{0pt}{}{\imath>\jmath}{\overline{\imath}=\overline{\jmath}}}\, h_\imath(2h_\jmath -h_{\jmath-1} - h_{\jmath+1}).
\end{align}

\begin{proposition}[{\cite[Proposition~6.1]{art:varagnolovasserot1999}}]\label{P:Fockfinite}
	For any pair $l,l' \in \N^{\Z /n\Z}$ and any $d\in \N^\Z$ such that $\overline{d}=l + l'$ we have
	\begin{align}
		\sum_{\genfrac{}{}{0pt}{}{h+h'=d}{\overline{h}=l,\, \overline{h'}=l'}} \, \upsilon^{k(h,h')}\gamma_h(u) \star \gamma_{h'}(u')=\gamma_d(u \star u')
	\end{align}
	for any $u \in \bfH_{A_n^{(1)}}[l], u' \in \bfH_{A_n^{(1)}}[l']$.
\end{proposition}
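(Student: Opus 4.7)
The plan is to compute both sides as $\widetilde{\Q}$-linear combinations of basis elements $[W]$ of $\bfH_{A_\infty}[d]$ and match coefficients. By bilinearity and the gradation, it suffices to treat $u = [M]$, $u' = [M']$ for representations $M, M'$ of $A_n^{(1)}$ of dimensions $l, l'$. The heart of the argument is a natural correspondence between filtered extensions in $E_{V, \overline{V}}$ and graded extensions in $E_V$ equipped with filtered lifts of their pieces, combined with a numerical identity on $\upsilon$-exponents.

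Unwinding the definition of $\gamma_d = \upsilon^{-h(d)}p_! j^*$ and of the Hall multiplications, the coefficient of $[W]$ on the right-hand side is, up to the scalar $\upsilon^{\langle l, l'\rangle_c - h(d)}$, the number of filtration-preserving representations $\overline{N} \in E_{V, \overline{V}}$ with $p(\overline{N}) = W$ whose underlying $\Z/n\Z$-graded representation is an extension of $M$ by $M'$ in $\Rep_k A_n^{(1)}$. For each decomposition $d = h + h'$ with $\overline{h} = l$ and $\overline{h'} = l'$, the coefficient of $[W]$ in $\gamma_h(u) \star \gamma_{h'}(u')$ is, up to the scalar $\upsilon^{\langle h, h'\rangle_\infty - h(h) - h(h')}$, the number of triples $(W' \subset W, \overline{M}, \overline{M'})$ where $W' \subset W$ is a subrepresentation of dimension $h$ and $\overline{M}, \overline{M'} \in E_{V, \overline{V}}$ are filtration-preserving lifts of $W'$ and $W/W'$ isomorphic to $M$ and $M'$ respectively. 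The key observation is: given $\overline{N}$, intersecting the filtration $\overline{V}^\bullet_{\geq}$ with the subobject $M$ and projecting it to the quotient $M'$ endows them with filtered structures whose associated gradeds fit into a short exact sequence $0 \to \mathsf{gr}(M) \to W \to \mathsf{gr}(M') \to 0$, identifying $W' := \mathsf{gr}(M) \subset W$ of dimension $h := \underline{\dim}\,\mathsf{gr}(M)$ with $\overline{h} = l$. Conversely, any such triple is realized by a filtered extension $\overline{N}$ reconstructed by pushout, yielding a natural groupoid correspondence at each fixed $(h, h')$.

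What remains is the numerical identity on exponents
\[
h(d) - h(h) - h(h') = \langle h, h'\rangle_\infty - \langle l, l'\rangle_c - k(h, h'),
\]
which follows by direct expansion using $d = h + h'$, the bilinearity of the Euler forms, the folding relation $\langle l, l'\rangle_c = \sum_{\ell \in \Z}\langle h, \tau^{\ell n} h'\rangle_\infty$ obtained by grouping coordinates of $\Z$ modulo $n$, and the defining formulas for $h(\cdot)$ and $k(\cdot,\cdot)$. The main obstacle is verifying that the natural bijection is genuinely a weighted bijection of groupoids: one must carefully identify automorphisms of $\overline{N}$ (as a filtered extension) with compatible automorphisms of the triples, and check that no hidden affine-space fibers contribute — or, if they do, that their $\F_q$-cardinality is precisely absorbed into the $\upsilon^{k(h,h')}$ prefactor. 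Once this is secured, the scalar bookkeeping reduces to the displayed exponent identity, which is routine to expand and verify.
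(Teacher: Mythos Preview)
The paper does not give its own proof of this proposition: it is stated as a quotation of \cite[Proposition~6.1]{art:varagnolovasserot1999} and used as a black box (the continuum analog, Proposition~\ref{P:fockhall}, is proved precisely by reducing to this cited result). So there is no in-paper argument to compare against; your proposal goes beyond what the paper does.

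Your outline is the correct strategy and is essentially how Varagnolo--Vasserot argue. Two remarks. First, a small slip: with the Hall product convention used here, $P^N_{M,M'}$ counts submodules $L\simeq M'$ with $N/L\simeq M$, so the subobject in the extension is $M'$, not $M$; your description swaps these midway. Second, and more substantively, the step you flag as ``the main obstacle'' is in fact the entire content of the proof, and you have not carried it out. The map from filtered extensions in $E_{V,\overline{V}}$ to graded extensions in $E_V$ together with filtered lifts of the pieces is \emph{not} a bijection: over a fixed triple $(W'\subset W,\overline{M},\overline{M'})$ the fiber is an affine space whose dimension is $\sum_{\ell>0}\dim\mathsf{Ext}^1(\tau^{\ell}(\text{gr}_{h}),\text{gr}_{h'})$ (coming from the off-diagonal blocks of the filtration), and it is precisely this fiber cardinality, combined with the parabolic-vs-Levi automorphism discrepancy, that produces the factor $\upsilon^{k(h,h')}$. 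Until you actually compute that fiber and match it against $k(h,h')=\sum_{\ell>0}(h,\tau^\ell(h'))$, the exponent identity you display cannot be checked independently --- it is equivalent to, not a consequence of, the fiber count. A complete proof would make this Ext/Hom calculation explicit rather than deferring it.
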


This proposition will be used in the following way. For $l \in \N^{\Z/n\Z}$ and  $x \in \bfH_{A_n^{(1)}}[l]$ we set
\begin{align}
	r(x)\coloneqq \sum_{d:\, \overline{d}=l} K_{o(d)} \, \gamma_d(u)  
\end{align}
where $o(d)\coloneqq \sum_{\ell <0} \tau^{\ell}(d)$. Note $r(x)$ takes values in a completion $\bfH^{\mathsf{tw,c}}_{A_\infty}$ of $\bfH^{\mathsf{tw}}_{A_\infty}$, which we now define.  Let $\bfH^{0,\mathsf{c}}_{A_\infty}$ be the group algebra of $\Z^\Z$. In other words $\bfH^{0,\mathsf{c}}_{A_\infty}$ is generated by elements $K_d$ for $d \in \Z^\Z$ (possibly having infinitely many nonzero components) subject to the relations $K_d K_l=K_{d+l}$, for $d,l \in \Z^\Z$. We set
\begin{align}
	\bfH^{\mathsf{tw,c}}_{A_\infty}\coloneqq \bigoplus_{l \in \N^{\Z/n\Z}}\bfH^{\mathsf{tw,c}}_{A_\infty} [l] \quad \textit{and}\quad
	\bfH^{\mathsf{tw,c}}_{A_\infty}[l] \coloneqq \prod_{\substack{d \in \N_0^\Z \\ \overline{d}=l}}\left\{\bfH^{0,\mathsf{c}}_{A_\infty}\ltimes \bfH_{A_\infty}[d]\right\}\ .
\end{align}
Observe that for any $m \in \Z^\Z$ and any $d \in \N^\Z_0$ the Euler form $(m, d)$ is well-defined, so that the semi-direct product above makes sense. Moreover, $\bfH^{\mathsf{tw,c}}_{A_\infty}$ is still an algebra since there are only finitely many ways to break up a dimension vector $d$ as a sum $d' + d''$. 

\begin{corollary}\label{c:rhomfinite}
	The map $r\colon \bfH_{A_n^{(1)}} \to \bfH^{\mathsf{tw,c}}_{A_\infty}$ is an algebra homomorphism.
\end{corollary}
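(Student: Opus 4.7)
The plan is to unpack both sides of the desired identity $r(u\star u') = r(u)\star r(u')$ for homogeneous elements $u\in \bfH_{A_n^{(1)}}[l]$ and $u'\in\bfH_{A_n^{(1)}}[l']$, and to reduce the statement to the algebra identity provided by Proposition~\ref{P:Fockfinite} together with a scalar matching. First, I would expand
\begin{align}
r(u)\star r(u') = \sum_{\substack{h,h'\in\N^{\Z} \\ \overline{h}=l,\ \overline{h'}=l'}} K_{o(h)}\,\gamma_h(u) \star K_{o(h')}\,\gamma_{h'}(u').
\end{align}
Since $\gamma_h(u)\in\bfH_{A_\infty}[h]$, the commutation rule $K_d\,x\,K_d^{-1}=\upsilon^{(d,l)}x$ in the twisted Hall algebra gives $\gamma_h(u)\,K_{o(h')}=\upsilon^{-(o(h'),h)}K_{o(h')}\,\gamma_h(u)$. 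Using the $\Z$-linearity of the operator $o$, so that $o(h)+o(h')=o(h+h')$, and regrouping the sum according to $d\coloneqq h+h'$, we obtain
\begin{align}
r(u)\star r(u') = \sum_{\substack{d\in\N^\Z \\ \overline{d}=l+l'}} K_{o(d)} \sum_{\substack{h+h'=d \\ \overline{h}=l,\ \overline{h'}=l'}} \upsilon^{-(o(h'),h)}\,\gamma_h(u)\star\gamma_{h'}(u').
\end{align}

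On the other hand, by the definition of $r$ and Proposition~\ref{P:Fockfinite} applied to $u\star u'\in\bfH_{A_n^{(1)}}[l+l']$,
\begin{align}
r(u\star u') = \sum_{\substack{d\in\N^\Z \\ \overline{d}=l+l'}} K_{o(d)}\,\gamma_d(u\star u') = \sum_{\substack{d \\ \overline{d}=l+l'}} K_{o(d)} \sum_{\substack{h+h'=d \\ \overline{h}=l,\ \overline{h'}=l'}} \upsilon^{k(h,h')}\,\gamma_h(u)\star\gamma_{h'}(u').
\end{align}
Comparing the two expressions term by term, the corollary reduces to the purely combinatorial identity
\begin{align}
k(h,h') = -\,(o(h'),h) \qquad \text{for all } h,h'\in \N^\Z.
\end{align}

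The main (and essentially only) technical obstacle is verifying this exponent identity. The plan here is to unfold $o(h')=\sum_{\ell<0}\tau^\ell(h')$ and the definition of the symmetric form $(\cdot,\cdot)$ on $A_\infty$, and to exploit the translation invariance $(\tau^m v,\tau^m w)=(v,w)$ to rewrite $-(o(h'),h)=-\sum_{\ell<0}(\tau^\ell h',h)$ as a sum over positive shifts. A direct computation then matches this with $k(h,h')=\sum_{\ell>0}(h,\tau^\ell h')$ expanded using the explicit formula $(h,\tau^\ell h')$ involving the coefficients $2h'_j-h'_{j-1}-h'_{j+1}$, with the congruence condition $\overline{\imath}=\overline{\jmath}$ entering through the restriction that the shifts preserve the $\Z/n\Z$-grading. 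I expect this verification, while bookkeeping-heavy, to be a direct consequence of telescoping and the definitions, parallel to the check already performed by Varagnolo--Vasserot in the proof of Proposition~\ref{P:Fockfinite}. With this identity in hand, the equality $r(u\star u')=r(u)\star r(u')$ follows, and extending by bilinearity yields the claim.
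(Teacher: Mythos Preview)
Your overall strategy is exactly the intended derivation: expand both sides, commute the Cartan factor through, regroup according to $d=h+h'$, and invoke Proposition~\ref{P:Fockfinite}. The paper gives no explicit proof, so you are filling in precisely the right computation.

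The gap is in the final step. The identity $k(h,h')=-(o(h'),h)$ that you ``expect'' to verify is in fact \emph{false}. Take $n=2$, $h=e_0$, $h'=e_1$: one computes $k(h,h')=\sum_{\ell>0}(e_0,e_{1+2\ell})=0$, while $-(o(e_1),e_0)=-\sum_{\ell<0}(e_0,e_{1+2\ell})=-(e_0,e_{-1})=1$. More conceptually, $k(h,h')+(o(h'),h)=\sum_{\ell\neq 0}(h,\tau^\ell h')$, and there is no reason for this to vanish.

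What \emph{is} true, and immediate from the translation invariance $(\tau^\ell a,b)=(a,\tau^{-\ell}b)$ of the symmetrized Euler form, is
\[
(o(h),h')=\sum_{\ell<0}(\tau^\ell h,h')=\sum_{\ell<0}(h,\tau^{-\ell}h')=\sum_{m>0}(h,\tau^m h')=k(h,h').
\]
This is the exponent you obtain if you commute $K_{o(h)}$ past $\gamma_{h'}(u')$ rather than $\gamma_h(u)$ past $K_{o(h')}$; equivalently, it is what arises if $r(x)=\sum_d \gamma_d(x)\,K_{o(d)}$ with the Cartan factor placed on the right (note that the explicit formula~\eqref{E:nezt} for $r(F_J)$ indeed has the $K$'s written on the right of the $F$'s). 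So there is a harmless normalization ambiguity in the paper's presentation; once resolved, the scalar matching is a one-line consequence of translation invariance rather than the ``bookkeeping-heavy'' telescoping you anticipated. You should redo the commutation step with this in mind and check which convention makes everything consistent.
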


Hayashi's action of $\bfU_\upsilon^-(\hatfraksl(n))$ on the Fock space $\calF_\Z$ can now be recovered as follows. By Corollary~\ref{c:Ringel2} we embed $\bfU_\upsilon^-(\hatfraksl(n))$ into $\bfH_{A_n^{(1)}}$ and use the homomorphism $r$ to pullback the representation of $\bfU_\upsilon^-(\fraksl(\infty)) \simeq \bfH_{A_\infty}$ on $\calF_\Z$. Note that since the Fock space representation of $\bfU_\upsilon^-(\hatfraksl(n))$ is faithful, the map $r$ is injective; this can also be seen directly.

\bigskip\section{Fock space representation of the quantum group of the line}\label{s:Fockspaceline}

Fix $\K\in \{\Z, \Q, \R\}$. We denote by $\fun{\K}$ the algebra of piecewise constant, right--continuous functions $f\colon \R\to \Z$, with finitely many points of discontinuity, bounded support and whose points of discontinuity belong to $\K$. This means that $f\in\fun{\K}$ if and only if $f=\sum_J\,c_{J}\,\cf_J$, where the sum runs over all intervals of $\K$ and $c_J\in\Z$ is zero for all but finitely many $J$. Given $f,g \in \fun{\K}$, we define
\begin{align}\label{eq:eulerform}
	\langle f, g\rangle \coloneqq\sum_x\, f_{-}(x)(g_{-}(x)-g_{+}(x))\quad \text{and}\quad (f,g)\coloneqq\langle f, g\rangle+\langle g, f\rangle\ ,
\end{align}
where we have set $h_{\pm}(x)\coloneqq\lim_{t \to 0, t >0} h(x\pm t)$. Let $\fun{\K}^+$ (resp.\ $\fun{\K}^-$) be the set of functions $f\in \fun{\K}$ such that $f(x)\geq 0$ (resp.\ $f(x)\leq 0$) for any $x\in \R$.

\begin{remark}\label{rem:eulerform}
Let $J_i=[a_i, b_i[$ be a $\K$-interval for $i=1, 2$. Then
\begin{itemize}\itemsep0.2cm
	\item $\langle \cf_{J_1}, \cf_{J_2}\rangle =1$ if one of the following conditions holds: $J_1=J_2$, $a_1=a_2$  and $b_2<b_1$, $a_2<a_1$ and $b_1=b_2$, $a_2<a_1<b_2<b_1$;
	\item  $\langle \cf_{J_1}, \cf_{J_2}\rangle =0$ if one of the following conditions holds: $\overline J_1\cap \overline J_2=\emptyset$, $b_2=a_1$, $a_1=a_2$ and $b_1<b_2$, $a_1<a_2$ and $b_1=b_2$, $a_1<a_2<b_2<b_1$, $a_2<a_1<b_1<b_2$;
	\item $\langle \cf_{J_1}, \cf_{J_2}\rangle =-1$ if either $b_1=a_2$ or $a_1<a_2<b_1<b_2$.
\end{itemize}	
Thus, the following holds:
	\begin{itemize}\itemsep0.2cm
		\item for any $\K$-interval $J$,  we have $(\cf_J, \cf_J)=2$;
		\item if $J_1, J_2$ are $\K$-intervals of the form $J_1=[a,b[$ and $J_2=[b,c[$, we get
		\begin{align}
			(\cf_{J_1}, \cf_{J_2})=(\cf_{J_2}, \cf_{J_1})=-1 \ ;
		\end{align}
		\item in any other case, (i.e. if $I \neq J$ are $\K$-intervals such that $\cf_{J} + \cf_{I}$ is not the characteristic function of any interval) then $(\cf_{I}, \cf_{J})=0$.
	\end{itemize}
\end{remark}

\subsection{The bialgebra $\bfU_\upsilon(\fraksl(\K))$}\label{ss:algebra-K}

\begin{definition}[{cf.\ \cite[Section~1.1]{art:salaschiffmann2017}}]\label{def:R}
	Let $\bfU_\upsilon(\fraksl(\K))$ be the topological $\widetilde{\Q}$-bialgebra generated by elements $E_J, F_J,$ $K_{J}^{\pm 1}$, where $J$ runs over all $\K$-intervals, modulo the following set of relations:
	\begin{itemize}\itemsep0.4cm
		\item \emph{Drinfeld-Jimbo relations}:
		
		\begin{itemize}\itemsep0.2cm
			\item for any intervals $J, I, I_1, I_2$,
			\begin{align}
				[K_{I_1},K_{I_2}] &=0\ ,\\
				K_{I}\, E_{J}\, K_{I}^{-1} &=\upsilon^{( \cf_{I},\cf_{J})}\, E_{J}\ ,\\[2pt]
				K_{I}\, F_{J}\, K_{I}^{-1} &=\upsilon^{-( \cf_{I},\cf_{J})}\, F_{J} \ ;
			\end{align}
		
			\item if $J_1, J_2$ are intervals such that $J_1 \cap J_2 = \emptyset$, 
			\begin{align}
				[F_{J_1},E_{J_2}]=0\ ;
			\end{align}
		
			\item for any interval $J$,
			\begin{align}
				[E_J,F_J]=\frac{K_J-K_J^{-1}}{\upsilon-\upsilon^{-1}} \ ;
			\end{align} 
		\end{itemize}
		
		\item \emph{join relations}: if $J_1,J_2$ are intervals of the form $J_1=[a,b[$ and $J_2=[b,c[$ then
		\begin{align}
			K_{J_1}\, K_{J_2}=K_{J_1\cup J_2}\ ;
		\end{align}
		and
		\begin{align}
			\begin{aligned}
			E_{J_1\cup J_2} &=\upsilon^{1/2}\,E_{J_1}\,E_{J_2} - \upsilon^{-1/2}\,E_{J_2}\, E_{J_1}\ ,  \\[2pt]
			F_{J_1\cup J_2} &=\upsilon^{-1/2}\,F_{J_2}\,F_{J_1} - \upsilon^{1/2}\, F_{J_1}\, F_{J_2}  \ ;
			\end{aligned}
		\end{align}
		
		\item \emph{nest relations}:
		\begin{itemize}\itemsep0.2cm
			\item if $J_1, J_2$ are intervals such that $\overline{J_1} \cap \overline{J_2} =\emptyset$,
			\begin{align}\label{eq:nest-1}
				[E_{J_1},E_{J_2}]=0\quad\text{and}\quad [F_{J_1}, F_{J_2}]=0\ ;
			\end{align}
			
			\item if $J_1, J_2$ are intervals such that $J_1\subseteq J_2$,
			\begin{align}
				\begin{aligned}
				\upsilon^{\langle \cf_{J_1}, \cf_{J_2}\rangle} \, E_{J_1}\, E_{J_2} &=\upsilon^{\langle \cf_{J_2}, \cf_{J_1}\rangle} \, E_{J_2}\, E_{J_1} \ ,\\[2pt]
				\upsilon^{\langle \cf_{J_1}, \cf_{J_2}\rangle} \, F_{J_1}\, F_{J_2} &=\upsilon^{\langle \cf_{J_2}, \cf_{J_1}\rangle} \, F_{J_2}\, F_{J_1} \ .
				\end{aligned}
			\end{align}
		\end{itemize}
	\end{itemize}
	
	The coproduct is given by:
	\begin{align}
		\begin{aligned}
		 \Delta(K_J)&=K_J\otimes K_J\ ,\\[2pt]
		 \Delta (E_{[a,\, b[})&=E_{[a,\, b[} \otimes 1 + \sum_{a<c<b} \upsilon^{-1/2}\, (\upsilon-\upsilon^{-1})\, E_{[a,\, c[}\, K_{[c,\, b[} \otimes E_{[c,\, b[} + K_{[a,\, b[} \otimes E_{[a,\, b[} \ ,\\[2pt]
		 \Delta (F_{[a,\, b[})&=1\otimes F_{[a, \, b[} - \sum_{a<c<b} \upsilon^{-1/2}\, (\upsilon-\upsilon^{-1})\, F_{[c,\, b[}\otimes F_{[a,\, c[}\, K_{[c,\, b[}^{-1}  + F_{[a,\, b[}\otimes K_{[a,\, b[}^{-1}\ .
		\end{aligned}
	\end{align}
	Here the sums on the right-hand-side run over all possible\footnote{$\bfU_\upsilon(\fraksl(\K))$ is a \textit{topological} coalgebra, i.e. the comultiplication only takes values in a suitable completion, see \cite{art:salaschiffmann2017}; we will not use the coproduct in this paper.} $\K$-values $c\in [a, b[$.
\end{definition}

\begin{definition}
	Let $f=\sum_J\, c_J\, \cf_J\in \fun{\K}$. Then we set $K_f\coloneqq \prod_J\, K_J^{\, c_J}$.	
\end{definition}

\begin{remark}\label{rem:Serre}
	Let $J_1, J_2$ be $\K$-intervals of the form $J_1=[a,b[$ and $J_2=[b,c[$ (so that $J_1 \cup J_2$ is again a interval). Then, by the join and the nest relations we derive:
	\begin{align}\label{eq:serre-1}
		E_{J_1}E_{J_2}^2-[2]\, E_{J_2}E_{J_1}E_{J_2}+E_{J_2}^2 E_{J_1}&=0\\
		F_{J_1}F_{J_2}^2-[2]\, F_{J_2}F_{J_1}F_{J_2}+F_{J_2}^2 F_{J_1} &=0	
	\end{align}
	and similarly with $J_1$ and $J_2$ exchanged. In other words, the usual (type A) cubic Serre relations are implied by the join and nest relations.
\end{remark}

\begin{definition}
	We define a $\fun{\K}$-gradation on $\bfU_\upsilon(\fraksl(\K))$ by setting
	\begin{align}
		\deg(E_J)=\cf_J\ ,\quad \deg(F_J)=-\cf_J \quad\text{and}\quad \deg(K_J)=0
	\end{align}
	for any $\K$-interval $J$.
\end{definition}

As usual, we define $\bfU_\upsilon^+(\fraksl(\K))$ (resp.\ $\bfU_\upsilon^{-}(\fraksl(\K))$) as the subalgebra of $\bfU_\upsilon(\fraksl(\K))$ generated by the $E_J$ (resp.\ the $F_J$) for any $\K$-interval $J$. Let also $\bfU_\upsilon^0(\fraksl(\K))$ be the subalgebra of $\bfU_\upsilon(\fraksl(\K))$ generated by the $K_J^{\pm 1}$ for any $\K$-interval $J$. Finally, set
\begin{align}
	\bfU_\upsilon^{\leq 0}(\fraksl(\K))\coloneqq \bfU_\upsilon^{-}(\fraksl(\K))\cdot \bfU_\upsilon^0(\fraksl(\K)) \quad\text{and}\quad \bfU_\upsilon^{\geq 0}(\fraksl(\K))\coloneqq \bfU_\upsilon^0(\fraksl(\K))\cdot \bfU_\upsilon^+(\fraksl(\K)) \ .
\end{align}
Then $\bfU_\upsilon^{\leq 0}(\fraksl(\K))$ and $\bfU_\upsilon^{\geq 0}(\fraksl(\K))$ may be endowed with the structure of topological $\widetilde{\Q}$-bialgebras.

\subsection{The Fock space $\calF_\K$ of $\bfU_\upsilon(\fraksl(\K))$}

Our aim in this section is to define explicitly an action of the quantum group $\bfU_\upsilon(\fraksl(\K))$ on the space
\begin{align}
	\calF_\K\coloneqq \bigoplus_{p \in \mathsf{Pyr}(\K)}\,  \widetilde \Q \vert p\rangle,
\end{align}
generalizing the standard Fock space representation of $\bfU_{\upsilon}(\fraksl(\infty))$.

\begin{theorem} \label{thm:Fock-R}
	The following formulas define an action of the quantum group $\bfU_\upsilon(\fraksl(\K))$ on $\calF_\K$: for any $J=[a,b[$ and $p \in \mathsf{Pyr}(\K)$
	\begin{align}
		E_J \vert p\rangle&\coloneqq \begin{cases}
		 -\upsilon^{1/2} (-\upsilon)^{p(b)-p(a)}\, \vert p-\cf_J\rangle &\text{if $J$ is a removable interval of $p$}\ ,\\[4pt]
		0 & \text{otherwise}\ ,
		\end{cases}\\[6pt]
		F_J \vert p\rangle&\coloneqq \begin{cases}
		\upsilon^{1/2} (-\upsilon)^{p(a)-p(b)} \, \vert p+\cf_J\rangle &\text{if $J$ is an addable interval of $p$}\ ,\\[4pt]
		0 & \text{otherwise}\ ,
		\end{cases}\\[6pt]
		K_J \vert p\rangle&\coloneqq \upsilon^{n_{J}(p)}\, \vert p\rangle\ ,
	\end{align}
	where
	 \begin{align}
		n_{J}(p) =
		\begin{cases}
		0 & \text{if $J$ is neither addable or removable to $p$}\ , \\
		1 & \text{if $J$ is addable to $p$}\ ,\\
		-1 & \text{if $J$ is removable to $p$}\ .
		\end{cases} 
	 \end{align}
	 The representation $\calF_{\K}$ is highest weight and irreducible.
 \end{theorem}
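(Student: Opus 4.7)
The plan is to verify each defining relation of $\bfU_\upsilon(\fraksl(\K))$ on the basis $\{|p\rangle\}$, grouped by type, and then establish the highest weight property and irreducibility by an inductive reduction argument.

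The commutation of the $K_J$'s is immediate since they act diagonally. For the Cartan relations $K_I E_J K_I^{-1} = \upsilon^{(\cf_I, \cf_J)} E_J$ and the analogous one for $F_J$, it suffices, whenever $E_J|p\rangle \neq 0$, to check that $n_I(p - \cf_J) - n_I(p) = (\cf_I, \cf_J)$; this reduces to a finite case analysis on the relative position of $I$ and $J$ (disjoint, nested with or without a shared endpoint, sharing a single boundary point), matching the classification of $(\cf_I, \cf_J)$ in Remark~\ref{rem:eulerform}. The disjoint commutation $[F_{J_1}, E_{J_2}] = 0$ for $J_1 \cap J_2 = \emptyset$ follows at once because the corresponding modifications of $p$ happen on disjoint portions, and the scalar coefficients involve only endpoint values unaffected by the other operation. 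For the Heisenberg relation $[E_J, F_J] = (K_J - K_J^{-1})/(\upsilon - \upsilon^{-1})$, Remark~\ref{rem:action-nonzero} rules out the case where $J$ is simultaneously addable and removable, leaving three mutually exclusive cases; in each, a direct computation using $\cf_J(a) = 1$, $\cf_J(b) = 0$ shows that the product of the two sign factors telescopes to $\pm 1$ matching $n_J(p) \in \{-1, 0, 1\}$.

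The join and nest relations constitute the main technical obstacle, requiring careful bookkeeping of signs. For the $E$-join relation with $J_1 = [a, b[$, $J_2 = [b, c[$, the exponent $p(c) - p(a)$ appearing on the left of $E_{J_1 \cup J_2}|p\rangle$ splits additively as $(p(c) - p(b)) + (p(b) - p(a))$; I would then compare to $\upsilon^{1/2} E_{J_1} E_{J_2}|p\rangle - \upsilon^{-1/2} E_{J_2} E_{J_1}|p\rangle$ by distinguishing subcases according to whether $b \in D_\K(p)$ and the individual removability of $J_1, J_2$. Typically exactly one of the two orderings contributes nontrivially (since removing one sub-interval changes the discontinuity structure at $b$), and the explicit $\upsilon^{\pm 1/2}$ prefactors of the join formula precisely absorb the remaining $\upsilon$-discrepancy. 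The $F$-join and the nest relations are handled by entirely analogous case analyses, and in the nest relations the Euler-form exponents $\langle \cf_{J_1}, \cf_{J_2}\rangle$ exactly match the discrepancy between the two orderings of the $(-\upsilon)^{p(\cdot)}$ prefactors; the cubic Serre relations then follow automatically by Remark~\ref{rem:Serre}.

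For the highest weight property, $|0\rangle$ is annihilated by every $E_J$ since the empty pyramid has no removable interval, while $K_J|0\rangle = \upsilon^{n_J(0)}|0\rangle$ with $n_J(0) \in \{0, 1\}$. For cyclicity, any $\K$-pyramid $p$ admits a layer-by-layer decomposition $p = \cf_{J_1} + \cdots + \cf_{J_r}$ in which each partial sum $\cf_{J_1} + \cdots + \cf_{J_i}$ is itself a pyramid; since $F_J$ maps a basis vector to a nonzero scalar multiple of a single basis vector whenever it does not kill it, one obtains $F_{J_r} \cdots F_{J_1}|0\rangle = c_p|p\rangle$ with $c_p \neq 0$, whence $\bfU_\upsilon^-(\fraksl(\K))|0\rangle = \calF_\K$. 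For irreducibility, let $W \subseteq \calF_\K$ be a nonzero invariant subspace and $v \in W$ nonzero; choose $p_0 \in \mathrm{supp}(v)$ of minimal size $\int p$, and for $p_0 \neq 0$ let $J$ be any removable interval of $p_0$. Then $E_J v$ contains $|p_0 - \cf_J\rangle$ with a nonzero coefficient (no cancellation can occur since any other $|q\rangle$ in the support of $v$ satisfies $|q| > |p_0|$, hence $|q - \cf_J| > |p_0 - \cf_J|$), and has strictly smaller minimal support size. Iterating drives the minimal size down to zero, producing a nonzero multiple of $|0\rangle$ in $W$; combined with cyclicity, this gives $W = \calF_\K$.
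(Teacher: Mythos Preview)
Your proposal follows essentially the same route as the paper's proof: direct verification of each family of relations (Drinfeld--Jimbo, join, nest) on basis vectors, followed by cyclicity via a layer decomposition of the pyramid and irreducibility by reducing to the vacuum. The case analyses you outline match the paper's almost verbatim, up to the order in which the relation families are treated.

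There is one genuine slip in your irreducibility argument. You assert that for $p_0$ of minimal size in $\mathrm{supp}(v)$, ``any other $|q\rangle$ in the support of $v$ satisfies $|q| > |p_0|$''. This is false as stated: nothing prevents several distinct pyramids of the \emph{same} minimal size from appearing in $v$. Fortunately the conclusion (no cancellation in the coefficient of $|p_0 - \cf_J\rangle$) is still correct, but for a different reason: the map $q \mapsto q - \cf_J$ is injective on those $q$ for which $J$ is removable, so the only contribution to $|p_0 - \cf_J\rangle$ in $E_J v$ comes from $|p_0\rangle$ itself. Alternatively, since the $K_J$ act diagonally with distinct eigenvalues on distinct basis vectors (this is what Lemma~\ref{lem:K} ultimately yields), you may first reduce to $v$ being a weight vector, hence a scalar multiple of a single $|p\rangle$. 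The paper sidesteps this issue by exhibiting, for each pyramid $p = \sum_i \cf_{I_i}$ with strictly nested $I_i$, an explicit monomial $E_{I_1}\cdots E_{I_s}$ that kills every $|q\rangle$ with $|q|=|p|$ except $|p\rangle$ itself; your descent argument is a legitimate variant once the justification is repaired.
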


 \begin{proof} For simplicity, let us put
	\begin{align}
		e_{J,p}\coloneqq -\upsilon^{1/2} (-\upsilon)^{p(b)-p(a)}\quad \text{and} \quad f_{J,p}\coloneqq \upsilon^{1/2} (-\upsilon)^{p(a)-p(b)}\ .
	\end{align}
	We will check the compatibility with all the defining relations directly.

	
	\emph{Nest relations}. Let's start by verifying the nest relations for $F_J$. We begin with the case of a pair of intervals $I,J$ such that $\overline{I} \cap \overline{J}=\emptyset$. Then it is possible to successively add $I$ and then $J$ to $p$ if and only if it is possible to successively add $J$ and then $I$ to $p$. Moreover, in this case we have $f_{J, p+\cf_I}=f_{J,p}$ and $f_{I,p+\cf_J}=f_{I,p}$. It follows that $F_I\, F_J \vert p\rangle=F_J\, F_I\vert p\rangle$ as wanted.
	
	Now assume that $I=[a,b[ \subseteq J=[a',b'[$. We claim that it is possible to add both $I$ and $J$ to $p$ (in either order) only if $\overline{I} \subset J^{\circ}$, i.e., only if $a'<a<b<b'$. Indeed, suppose for instance that $a'=a$. Then $(p+\cf_I+\cf_J)(a)=p(a) + 2$ while $(p+\cf_I+\cf_J)_{-}(a)=p_{-}(a)$. But then $p$ and $p+\cf_I + \cf_J$ cannot both be pyramids: if $a \leq 0$ then we have $p_{-}(a) \leq p(a)$ hence $(p+\cf_I + \cf_J)_{-}(a) \leq (p+\cf_I+\cf_J)(a) -2$, violating condition iii) of pyramids; likewise, if $a >0$ then $p(a) \leq p(a) +1$ hence $(p+\cf_I + \cf_J)_{-}(a) \leq (p+\cf_I+\cf_J)(a)-1$, again violating condition iii) of pyramids. A very similar reasoning takes care of the case $b=b'$. We are thus left with the case $a'<a<b<b'$. In this situation, it is easy to see that either $I$ and $J$ are both addable to $p$, in which case they can be added in either order, or one of them is not addable to $p$, in which case the two can not be added (in either order). If both may be added, then we have
	\begin{align}
		p(a)-p(b)=(p+\cf_J)(a)-(p+\cf_J)(b)\quad\text{and}\quad p(a')-p(b')=(p+\cf_I)(a')-(p+\cf_I)(b')\ ,
	\end{align}
	from which it immediately follows that $f_{I,p+\cf_J}\, f_{J,p}=f_{J,p+\cf_I}\, f_{I,p}$, i.e., $F_I\, F_J\vert p\rangle=F_J\, F_I\vert p\rangle$  (note that $\langle \cf_I,\cf_J\rangle=\langle \cf_J,\cf_I \rangle=0$, cf.\ Remark~\ref{rem:eulerform}). The proof of the nest relations for the $E_J$ follows similarly as above. 
	
	\vspace{.1in}
	
	\emph{Join relations}. Let's now move to verify the join relations for the $F_J$. Assume that $I=[a,b[, J=[b,c[$. There are three mutually exclusive possible situations: it is not possible to add both $I$ and $J$ to $p$ (in either order) and neither $I\cup J$; it is possible to add $I$, then $J$, hence also $I\cup J$; it is possible to add $J$ then $I$, hence also $I\cup J$. In the first case, we have $F_I\, F_J\vert p\rangle=0=F_J\, F_I\vert p\rangle$, hence the join relation is proved. In the second case, we have
	\begin{align}
		(p+\cf_I)(b)=p(b)\ , \quad (p+\cf_I)(c)=p(c)\ ,
	\end{align}
	hence
	\begin{align}
		\upsilon^{-1/2}f_{J,p+\cf_I}\,f_{I,p}=\upsilon^{1/2} (-\upsilon)^{p(a)-p(c)} = f_{I\cup J, p}\ .
	\end{align}
	In the last case, we have
	\begin{align}
		(p+\cf_J)(a)=p(a)\ , \quad (p+\cf_J)(b)=p(b)+1\ .
	\end{align}
	Hence
	\begin{align}
		-\upsilon^{1/2}f_{I,p+\cf_J}\, f_{J,p}=-\upsilon^{3/2} (-\upsilon)^{p(a)-p(c)-1}=   f_{I\cup J, p}  \ .
	\end{align}
	In both cases, the join relation $F_{I \cup J}\vert p\rangle=\upsilon^{1/2}F_I\, F_J\vert p\rangle-\upsilon^{-1/2} F_J\, F_I \vert p\rangle$ is verified. Similarly, we verify the join relation for $E_J$.
	
	
	\emph{Drinfeld-Jimbo type relations}. Finally, let us verify the Drinfeld-Jimbo type relations. The only relation we have to address carefully is the commutation relation between the $E_J$ and the $F_J$. By the same argument as at the beginning of this proof, it is not possible to both add and remove the same interval $J$ to a pyramid $p$ (otherwise it would be possible to add twice the interval $J$ to $p-\cf_J$). Thus either $F_J\, E_J\vert p\rangle=0$ or $E_J\, F_J\vert p\rangle=0$. There are three (mutually exclusive) possibilities:
	\begin{itemize}[leftmargin=0.5cm]\itemsep0.2cm
		\item it is neither possible to add nor remove $J$. The assertion follows. 
		
		\item it is possible to add $J=[a,b[$. Then 
		\begin{align}
			E_J\, F_J\vert p\rangle= -\upsilon^{1/2} (-\upsilon)^{(p+\cf_J)(b)-(p+\cf_J)(a)}\upsilon^{1/2}(-\upsilon)^{p(a)-p(b)} \vert p\rangle=-\upsilon(-\upsilon)^{-1} \vert p\rangle=\vert p\rangle\ .
		\end{align}
		Hence, $[E_J, F_J]\vert p\rangle= \vert p\rangle$, as wanted. 
		
		\item it is possible to remove $J$. Then
		\begin{align}
			F_J\, E_J\vert p\rangle =\upsilon^{1/2} (-\upsilon)^{(p-\cf_J)(a)-(p-\cf_J)(b)} (-\upsilon^{1/2}) (-\upsilon)^{p(b)-p(a)}\vert p\rangle=-\upsilon (-\upsilon)^{-1} \vert p\rangle\ .
		\end{align}
		Hence, $[E_J, F_J]\vert p\rangle= - \vert p\rangle$, as expected.
	\end{itemize}
	The rest of the Drinfeld-Jimbo relations are easier to be verified and we leave the check to the interested reader. 
	
	To finish, let us check that $\calF_\K$ is generated by $\vert 0 \rangle$ and is irreducible. Let $p$ be a $\K$-pyramid. We may write $p$ as a sum $p=\sum_{i=1}^s \cf_{I_i}$ where $I_1, \ldots I_s$ are strictly nested intervals, i.e. $I_1 \supset I_2 \supset \cdots \supset I_s$ and the endpoints of the $I_i$ are all distinct. It is easy to see that, up to a constant, $F_{I_s} \cdots F_{I_1} \cdot \vert 0 \rangle=\vert p \rangle$, proving the first assertion. The irreducibility may be proved by reversing this argument: up to a constant, we have $E_{I_1} \cdots E_{I_s} \cdot \vert q \rangle = \delta_{p,\, q} \vert 0 \rangle$ if $q$ is any pyramid such that $|p|=|q|$.
\end{proof}

\begin{remark}
	Note that for $J=[a,b[$ we have
	\begin{align}
		\langle \cf_J, p\rangle = \sum_{x>a}^b (p_{-}(x)-p_+(x))=p(a)-p(b),\quad \langle p, \cf_J\rangle = p_{-}(b)-p_{-}(a)\ .
	\end{align}
	Hence 
	\begin{align}
		e_{J, p}= -\upsilon^{1/2} (-\upsilon)^{-\langle \cf_J, p\rangle}\ , \quad f_{J, p}= \upsilon^{1/2} (-\upsilon)^{\langle \cf_J, p\rangle}\ .
	\end{align}
\end{remark}

For completeness, we also state the following result.
\begin{lemma}\label{lem:K}
	Let $J$ be a $\K$-interval and $p$ be a $\K$-pyramid. Then
	\begin{align}
		n_{J}(p) =\delta_{0 \in J} -(\cf_J,p)\ .
	\end{align}
\end{lemma}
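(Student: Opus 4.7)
The plan is to compute $(\cf_J,p)$ explicitly from the definition of the Euler form, and then verify the identity by a short case analysis on the position of $J=[a,b[$ relative to $0$.

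For the computation, observe that $(\cf_J)_{-}(x)=1$ iff $x\in(a,b]$, while $(\cf_J)_{-}(x)-(\cf_J)_{+}(x)$ equals $-1$ at $x=a$, $+1$ at $x=b$, and vanishes elsewhere (using that $\cf_J$ is right-continuous with jumps only at $a$ and $b$). The first observation gives a telescoping sum
\begin{align}
\langle\cf_J,p\rangle=\sum_{x\in (a,b]}\bigl(p_-(x)-p_+(x)\bigr)=p(a)-p(b),
\end{align}
where the telescoping uses the right-continuity of $p$ together with the fact that $p$ has only finitely many discontinuities in $(a,b]$. The second observation gives
\begin{align}
\langle p,\cf_J\rangle=-p_-(a)+p_-(b).
\end{align}
Adding these, one obtains $(\cf_J,p)=\Delta_a-\Delta_b$, where $\Delta_x:=p(x)-p_-(x)$ denotes the signed jump of $p$ at $x$.

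It therefore suffices to show $n_J(p)=\delta_{0\in J}-\Delta_a+\Delta_b$. The pyramid axioms imply that $\Delta_x\in\{0,1\}$ when $x\le 0$, $\Delta_x\in\{-1,0\}$ when $x>0$, and $\Delta_x\ne 0$ precisely when $x\in D_\K(p)$. One then splits into the three position cases $0<a$, $a\le 0<b$, and $b\le 0$, corresponding to $\delta_{0\in J}=0,1,0$ respectively. In each case one reads off from Remark~\ref{rem:action-nonzero} (combined with the jump-size bound $|\Delta_x|\le 1$ and the monotonicity constraints, as already used in the proof of Theorem~\ref{thm:Fock-R}) exactly which pairs $(\Delta_a,\Delta_b)$ make $J$ addable, removable, or neither. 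For example, when $0<a$, addability forces $(\Delta_a,\Delta_b)=(-1,0)$; removability forces $(\Delta_a,\Delta_b)=(0,-1)$; and otherwise $\Delta_a=\Delta_b$. The two remaining position cases are symmetric variants, with the $\delta_{0\in J}$ contribution absorbing the single asymmetric case $a\le 0<b$ where both $\Delta_a\ge 0$ and $-\Delta_b\ge 0$.

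The only obstacle is organizing the bookkeeping across the nine resulting sub-cases; no new idea is needed beyond the telescoping computation of $(\cf_J,p)$ and the translation of the pyramid conditions into constraints on the signed jumps $\Delta_a,\Delta_b$. In each sub-case the identity reduces to a single line of arithmetic.
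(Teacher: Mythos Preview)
Your proposal is correct and follows essentially the same approach as the paper's own proof: both compute $(\cf_J,p)$ in terms of the jumps of $p$ at the endpoints $a,b$ (the paper writes it as $(p_-(b)-p(b))-(p_-(a)-p(a))$, which is your $-\Delta_b+\Delta_a$), and both then run a case analysis combining the position of $J$ relative to $0$ with the addable/removable/neither trichotomy. The only difference is organizational---the paper nests the cases in the opposite order (addable/removable/neither outside, $0\in J$ or not inside)---and your $\Delta_x$ notation makes the bookkeeping marginally tidier.
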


\begin{proof} Let $J=[a,b[$. Note that $(\cf_J,p)=(p_{-}(b)-p(b))- (p_{-}(a)-p(a))$. There are three (mutually exclusive) possibilities to consider:
	\begin{itemize}[leftmargin=0.5cm]\itemsep0.2cm
		\item it is neither possible to add nor remove $J$. We will check that $\delta_{0 \in J} -(\cf_J,p)=0$. There are two subcases to consider, according to whether $0 \in J$ or $0 \not\in J$.
		\begin{itemize}\itemsep0.2cm
			\item[(a')] $0 \not\in J$. In that case, either the two endpoints $a$ and $b$ of $J$ do not lie on points of discontinuity of $p$ or they both lie on points of discontinuity of $p$. Thus $(\cf_J,p)=0$.
			\item[(a'')] $0 \in J$. In that case, exactly one of the endpoints $a$ and $b$ of $J$ lie on a point of discontinuity of $p$. Moreover, $a \leq 0$ and $b >0$ so that $p_{-}(a) \leq p(a)$ while $p_{-}(b) \geq p(b)$. It follows that $(\cf_J,p)=1$.
		\end{itemize}
		\item it is possible to add $J$. Then we will show that $\delta_{0 \in J} -(\cf_J,p)=1$. There are again two subcases to consider, according to wether $0 \in J$ or $0 \not\in J$. 
		\begin{itemize}\itemsep0.2cm
			\item[(b')] $0 \not\in J$. In that case, either $a>0$ and $a$ is a point of discontinuity of $p$ but not $b$, or $b<0$ and $b$ is a point of discontinuity of $p$ but not $a$. Hence $(\cf_J,p)=-1$.
			\item[(b'')] $0 \in J$. In that case, neither endpoints $a$ and $b$ of $J$ may lie over a point of discontinuity of $p$, and thus $(\cf_J,p)=0$.
		\end{itemize}
		\item it is possible to remove $J$. Then we will show that $\delta_{0 \in J} -(\cf_J,p)=-1$. There are two subcases to consider, according to whether $0 \in I$ or $0 \not\in J$. 
		\begin{itemize}\itemsep0.2cm
			\item[(c')] $0 \not\in J$. Here, either $a<0$ and $a$ lies over a point of discontinuity of $p$ (but not $b$), or $a>0$ and $b$ lies over a point of discontinuity of $p$ (but not $a$). We deduce $(\cf_J,p)=1$.
			\item[(c'')] $0 \in J$. In that case, $a\leq 0, b>0$ and both $a$ and $b$ lie over points of discontinuity of $p$, so that $(\cf_J,p)=2$.
		\end{itemize}
	\end{itemize}
\end{proof}

\subsection{Generalisation to arbitrary discrete subsets}\label{ss:gen}

Let $\alpha\colon \Z \to \R$ be any strictly increasing map. We may define a quantum group $\bfU_{\upsilon}(\fraksl(\alpha(\Z)))$ as the subalgebra (but not coalgebra) of $\bfU_\upsilon(\fraksl(\R))$ generated by all elements $E_J,F_J,K_J$ where the endpoints of $J$ belong to $\alpha(\Z)$. There is an obvious isomorphism $\gamma_{\alpha}\colon \bfU_\upsilon(\fraksl(\Z)) \stackrel{\sim}{\to} \bfU_\upsilon(\fraksl(\alpha(\Z)))$ mapping $E_J, F_J, K_J$ to $E_{\alpha(J)}, F_{\alpha(J)}, K_{\alpha(J)}$ respectively. Let us now assume that $0 \in \mathsf{Im}(\alpha)$ and let $\calF_{\alpha(\Z)}$ be the linear subspace of $\calF_\R$ spanned by pyramids $p$ satisfying $D_\R(p) \in \mathsf{Im}(\alpha)$. It is clear that $\calF_{\alpha(\Z)}$ is stable under the action of $\bfU_\upsilon(\fraksl(\alpha(\Z)))$ and that the following square is commutative:
\begin{align}
	\begin{tikzcd}[ampersand replacement = \&]
		\bfU_\upsilon(\fraksl(\alpha(\Z))) \ar{r} \& \mathsf{End}(\calF_{\alpha(\Z)}) \\ 
		\bfU_\upsilon(\fraksl(\Z)) \ar{u}{\gamma_\alpha} \ar{r} \& \mathsf{End}(\calF_{\Z}) \ar{u}{\iota_{\alpha}}
	\end{tikzcd}\ ,
\end{align}
where $\iota_{\alpha}\colon \calF_{\Z} \stackrel{\sim}{\to} \calF_{\alpha(\Z)}$ is the linear isomorphism induced by $\alpha$.

\subsection{Relation with $\bfU_\upsilon(\fraksl(\infty))$}\label{ss:infty}

The quantum group $\bfU_\upsilon(\fraksl(\Z))$ admits a minimal set of generators $\{E_{[i,\, i+1[},$ $F_{[i,\, i+1[},$ $K_{[i,\, i+1[}^{\pm 1}$ $\vert\;i\in\Z \}$. Thanks to the nest relation \eqref{eq:nest-1} and the Serre relations \eqref{eq:serre-1}, the assignment
\begin{align}
	E_i\mapsto E_{[i,\, i+1[} \ , \quad F_i \mapsto F_{[i,\, i+1[}\ , \quad  K_i^{\pm 1}\mapsto K_{[i,\, i+1[}^{\pm 1}
\end{align}
defines an isomorphism of bialgebras $\bfU_\upsilon(\fraksl(\infty)) \to \bfU_\upsilon(\fraksl(\Z))$.

The action of $\bfU_\upsilon(\fraksl(\Z))$ on $\calF_\Z$ given in Theorem~\ref{thm:Fock-R} reduces to
\begin{align}
	E_{[i,\, i+1[} \vert \lambda \rangle&\coloneqq \begin{cases}
	-\upsilon^{1/2}\, \vert \nu \rangle & \text{if $Y_\lambda\setminus Y_\nu$ is a box with color $i$ and $i<0$}\ , \\[4pt]
	\upsilon^{-1/2}\, \vert \nu \rangle & \text{if $Y_\lambda\setminus Y_\nu$ is a box with color $i$ and $i\geq 0$}\ , \\[4pt]
	0 & \text{otherwise} \ ,
	\end{cases}\\[6pt]
	F_{[i,\, i+1[} \vert \lambda \rangle&\coloneqq\begin{cases}
	-\upsilon^{-1/2}\,\vert \mu \rangle & \text{if $Y_\mu\setminus Y_\lambda$ is a box with color $i$ and $i<0$} \ ,\\[4pt]
	\upsilon^{1/2}\,\vert \mu \rangle & \text{if $Y_\mu\setminus Y_\lambda$ is a box with color $i$ and $i\geq 0$} \ ,\\[4pt]
	0 & \text{otherwise}\ ,
	\end{cases}\\[6pt]
	K_{[i,\, i+1[} \vert  \lambda \rangle&\coloneqq\upsilon^{n_i(\lambda)}\vert \lambda \rangle
\end{align}
for $i\in \Z$. Here, we used the bijection between $\Z$-pyramids and partitions (cf.\ Lemma~\ref{lem:bijection}) and the identity
\begin{align}
	n_i(\lambda)=n_{[i, i+1[}(p_\lambda) \ ,
\end{align}
where $p_\lambda$ is the $\Z$-pyramid associated with $\lambda$. Thus the $\bfU_\upsilon(\fraksl(\Z))$-action on $\calF_\Z$ defined in Theorem~\ref{thm:Fock-R} is identified with a suitable rescaling of the standard $\bfU_\upsilon(\fraksl(\infty))$-action on the Fock space.

\bigskip\section{The circle quantum group and the folding procedure}\label{s:Hallalgebra} 
%

In this section, we recall the definition of the circle quantum group and we provide a realization of it in terms of Hall algebras of continuum quivers. Finally, we generalize the ``folding procedure" of Hayashi-Misra-Miwa  (see \cite[Section~6.2]{art:hayashi1990} and \cite[Section~2]{art:misramiwa1990}) to the circle quantum group. We will readapt a interpretation of the folding procedure due to Varagnolo-Vasserot (cf.\  \cite[Section~6]{art:varagnolovasserot1999}) which uses the Hall algebras realization of the quantum group. This folding procedure will be a key tool to the proof of the action of the circle quantum group on the Fock space in the next section. 
 
 Let $\K$ be either $\Q, \R$ or $\alpha(\Z)$ where $\alpha \colon \Z \to \R$ is a strictly increasing map whose image contains $0$ and is invariant under integer translations. Set $S^1_\K\coloneqq \K/\Z$ and denote by $\pi_\K\colon \K \to S^1_\K$ the projection map. 

\subsection{The bialgebra $\bfU_\upsilon(\fraksl(S^1_\K))$}

The notion of (closed on the left, open on the right) intervals generalizes in a straightforward way to $S^1_\K$. We say that an interval $J$ of $S^1_\K$ is \emph{strict} if $J\neq S^1_\K$.

We denote by $\fun{S^1_\K}$ the algebra of piecewise constant, right--continuous, $\Z$-valued functions $f\colon S^1_\R\to \R$, with finitely many points of discontinuity, whose points of discontinuity belong to $S^1_\K$. There is an obvious map $\pi_\K\colon\fun{\K}\to \fun{S^1_\K}$. Formula \eqref{eq:eulerform} defines bilinear forms $\langle \cdot, \cdot \rangle$ and $(\cdot, \cdot)$ on $\fun{S^1_\K}$. One defines $\fun{S^1_\K}^{\pm}$ as before. There is an obvious map $\pi_\K\colon\fun{\K}^{\pm}\to \fun{S^1_\K}^{\pm}$. 

\begin{remark}
	Note that $(\cf_{S^1}, f)=0$ for any $f\in \fun{S^1_\K}$. In fact it is easy to see that the kernel of $(\cdot,\cdot)$ is equal to $\Z\, \cf_{S^1}$.
\end{remark}
Define, for a strict interval $J \subset S^1_{\K}$,
\begin{align}
\Int(J)\coloneqq \big\{I\subset \K\;\vert\; \text{$I=[a,b[$ is a $\K$-interval}\ , \pi_\K(I)=J\, \big\}\ .
\end{align}
Thus $\Int(J)$ consists of all integer translates of some (any) $\K$-interval $\tilde{J}$ such that $\pi_\K(\tilde{J})=J$.
\begin{definition}
	Let $J$ and $J'$ be strict intervals of $S^1_\K$. We say that $J$ is \emph{left adjacent} to $J'$ if $J\cap J'=\emptyset$, $\overline{J}\cap J'\neq \emptyset$ and $J\cup J'$ is an interval of $S^1_\K$. We denote it by $J\to J'$.
\end{definition} 
We are now ready to give the definition of the circle quantum group.
\begin{definition}[{\cite[Definition~1.1]{art:salaschiffmann2017}}]\label{def:S}
	Let $\bfU_\upsilon(\fraksl(S^1_\K))$ be the topological $\widetilde \Q$-bialgebra generated by elements $E_J, F_J, K_{J'}^{\pm 1}$, where $J$ (resp.\ $J'$) runs over all strict intervals (resp.\ intervals) of $S^1_\K$, modulo the following set of relations:
	\begin{itemize}\itemsep0.4cm
		\item \emph{Drinfeld-Jimbo relations}:
		\begin{itemize}\itemsep0.2cm
			\item for any intervals $I, I_1, I_2$ and strict interval $J$,
			\begin{align}\label{eq:DJ-1}
			[K_{I_1},K_{I_2}] &=0\ ,\\ \label{eq:DJ-2}
			K_{I}\, E_{J}\, K_{I}^{-1} &=\upsilon^{( \cf_{I},\cf_{J})}\, E_{J}\ ,\\[2pt] \label{eq:DJ-3}
			K_{I}\, F_{J}\, K_{I}^{-1} &=\upsilon^{-( \cf_{I},\cf_{J})}\, F_{J} \ ;
			\end{align}
			
			\item if $J_1, J_2$ are strict intervals such that $J_1 \cap J_2 = \emptyset$, 
			\begin{align}\label{eq:DJ-4}
			[E_{J_1},F_{J_2}]=0\ ;
			\end{align}
			
			\item for any strict interval $J$,
			\begin{align}\label{eq:DJ-5}
			[E_J,F_J]=\frac{K_J-K_J^{-1}}{\upsilon-\upsilon^{-1}} \ ;
			\end{align} 
		\end{itemize}
		\item \emph{join relations}:
		\begin{itemize}\itemsep0.2cm
			\item if $J_1,J_2$ are strict intervals such that $J_1$ is left adjacent to $J_2$,
			\begin{align}
			K_{J_1}\, K_{J_2}=K_{J_1\cup J_2}\ ;
			\end{align}
			
			\item if $J_1, J_2$ are strict intervals such that $J_1$ is left adjacent to $J_2$ and $J_1 \cup J_2$ is again a strict interval,
			\begin{align}
			\begin{aligned}
			E_{J_1\cup J_2} &=\upsilon^{1/2}\,E_{J_1}\,E_{J_2} - \upsilon^{-1/2}\,E_{J_2}\, E_{J_1}\ ,  \\[2pt]
			F_{J_1\cup J_2} &=\upsilon^{-1/2}\,F_{J_2}\,F_{J_1} - \upsilon^{1/2}\, F_{J_1}\, F_{J_2}  \ ;
			\end{aligned}
			\end{align}
		\end{itemize}
		
		\item \emph{nest relations}:
		\begin{itemize}\itemsep0.2cm
			\item if $J_1, J_2$ are strict intervals such that $\overline{J_1} \cap \overline{J_2} =\emptyset$,
			\begin{align}
			[E_{J_1},E_{J_2}]=0\quad\text{and}\quad [F_{J_1}, F_{J_2}]=0\ ;
			\end{align}
			\item if $J_1, J_2$ are strict intervals such that $J_1\subseteq J_2$,
			\begin{align}
			\begin{split}
			\upsilon^{\langle \cf_{J_1}, \cf_{J_2}\rangle} \, E_{J_1}\, E_{J_2} &=\upsilon^{\langle \cf_{J_2}, \cf_{J_1}\rangle} \, E_{J_2}\, E_{J_1} \ ,\\
			\upsilon^{\langle \cf_{J_1}, \cf_{J_2}\rangle} \, F_{J_1}\, F_{J_2} &=\upsilon^{\langle \cf_{J_2}, \cf_{J_1}\rangle} \, F_{J_2}\, F_{J_1} \ .
			\end{split}
			\end{align}
		\end{itemize}
	\end{itemize}
	
	The coproduct is given by
	\begin{align}
	\begin{aligned}
	\Delta(K_J)&=K_J\otimes K_J\ ,\\[2pt]
	\Delta (E_{[a,\, b[})&=E_{[a,\, b[} \otimes 1 + \sum_{a<c<b} \upsilon^{-1/2}\, (\upsilon-\upsilon^{-1})\, E_{[a,\, c[}\, K_{[c,\, b[} \otimes E_{[c,\, b[} + K_{[a,\, b[} \otimes E_{[a,\, b[} \ ,\\[2pt]
	\Delta (F_{[a,\, b[})&=1\otimes F_{[a, \, b[} - \sum_{a<c<b} \upsilon^{-1/2}\, (\upsilon-\upsilon^{-1})\, F_{[c,\, b[}\otimes F_{[a,\, c[}\, K_{[c,\, b[}^{-1}  + F_{[a,\, b[}\otimes K_{[a,\, b[}^{-1}\ .
	\end{aligned}
	\end{align}
	Here the sums on the right-hand-side run over all possible $\K$-values $c\in [a, b[$.
\end{definition}

\begin{definition}
	We define a $\fun{S^1_\K}$-gradation on $\bfU_\upsilon(\fraksl(S^1_\K))$ by setting
	\begin{align}
	\deg(E_J)=\cf_J\ ,\quad \deg(F_J)=-\cf_J \quad\text{and}\quad \deg(K_{J'})=0
	\end{align}
	for any strict interval $J$ and any interval $J'$.
\end{definition}
As in Section~\ref{ss:algebra-K}, we define the subalgebras $\bfU_\upsilon^\epsilon(\fraksl(S^1_\K))$, with $\epsilon=0, \pm 1$, as well as the negative subalgebra $\bfU_\upsilon^{\leq 0}(\fraksl(S^1_\K))$ and the positive subalgebra $\bfU_\upsilon^{\geq 0}(\fraksl(S^1_\K))$.

\subsection{Representations of continuum quivers}\label{ss:representations}

Let us begin with the line $\K$, with $\K\in \{\R, \Q\}$ as before. We start by recalling the notion of persistence modules (see  \cite{CBpersistent},  \cite[Section~2.3]{dufresne_sampling} for a recent report).

\begin{definition}\label{def:persistence-module}
	Let $k$ be a field. A \emph{$\K$-persistence module}\footnote{In the literature of persistent homology \cite{carlsson-09, oudot-15}, this is called a \emph{pointwise finite-dimensional} persistence module.} is a functor $F\colon (\K, \leq)\to \mathbf{Vect}_k$ from the poset $(\K, \leq)$ to the category $\mathbf{Vect}_k$ consisting of finite-dimensional vector spaces over $k$ with linear maps between them. Explicitly, $F$ is determined by:
	\begin{itemize}\itemsep0.2cm
		\item[i)] a finite-dimensional $k$-vector space $F(t)$ for every $t \in \K$,
	
		\item[ii)] a $k$-linear map $F(s\leq t)\colon V_s \to V_t$ for every pair of real numbers $s\leq t$ such that
		\begin{itemize}\itemsep0.2cm
			\item $F(t\leq t)$ is the identity map from $V_t$ to itself,
	
			\item given real numbers $s\leq t\leq u$, one has $F(s\leq u)=F(t\leq u)\circ F(s\leq t)$.
		\end{itemize} 
	\end{itemize}
\end{definition}
Morphisms of $\K$-persistence modules are defined in the obvious way.

\begin{remark}\label{rem:generalization}
	The poset $(\K, \leq)$ can be interpreted as a category, whose objects are points of $\K$ and the set of maps between two objects $s,t\in \K$ consists of exactly one map $\iota_{t,\, s}$ if $s\leq t$, otherwise is empty. Thanks to this interpretation, one can extend the definition above replacing $(\K, \leq)$ with any (small) category, as done e.g. in \cite[Section~2]{CBpersistent}.
\end{remark}

\begin{definition}
	A point $t\in \K$ is \emph{regular} for a persistence module $F$ if there exists an interval $I\subseteq \K$ where $t\in I$ and $F(a\leq b)$ is an isomorphism for all pairs $a, b\in I$. Otherwise we say that $t$ is \emph{critical}. A persistence module $F$ is \emph{tame} if it has finitely many critical values.
\end{definition}
The set of all regular points will be called the \emph{regular locus} and its complement the \emph{critical locus}.  

\begin{example}\label{ex:k_J}
	Any $\K$-interval $J=[a,b[$ gives rise to a persistence module $k_J$, for which $k_J(t)=k$ if $t\in J$, otherwise $k_J(t)=\{0\}$, and for any pair $s,t\in \K$ one has that $k_J(s\leq t)$ is the identity map if $s,t \in J$, otherwise $k_J(s\leq t)$ is the zero map. By \cite[Proposition~2.2]{BLpersistent}, $k_J$ is an example of an indecomposable module with local endomorphism ring.
\end{example}	

From now on, we shall restrict to a smaller class of persistence modules.
\begin{definition}
	We say that a persistence module $F$ is \emph{coherent} if it is tame and the map $t \mapsto \dim F(t)$ is right--continuous and compactly supported.
\end{definition}

The fundamental theorem of persistent homology says that any (pointwise finite-dimensional) persistence module is a direct sum of indecomposable modules with local endomorphism ring (cf.\  \cite[Theorem~1.1]{CBpersistent}). If we apply it to our coherent persistence modules, we obtain:
\begin{lemma}\label{lem:fundamental}
	For any coherent $\K$-persistence module $F$ there exists a partition
	\begin{align}
		\{x\in \K \;\vert\; F(x)\neq \{0\}\}=\bigsqcup_{i} J_i
	\end{align}
	into finitely many $\K$-intervals $J_i=[a_i,b_i[$ such that $F(v\leq u)$ is an isomorphism for any pair $v\leq u$ of elements belonging to the same interval $J_i$.
\end{lemma}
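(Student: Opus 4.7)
The plan is to apply the Crawley--Boevey structure theorem for pointwise finite dimensional persistence modules (Theorem~1.1 of \cite{CBpersistent}, recalled in the paragraph preceding the lemma): this produces a decomposition $F \simeq \bigoplus_{\alpha \in A} k_{I_\alpha}$ into indecomposable interval modules indexed by some set of $\K$-intervals $I_\alpha$. The remainder of the argument consists in using the coherence hypothesis to constrain the shape and number of these intervals, and then reading off the desired partition from the finite set of their endpoints.

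First, I would verify that the decomposition has the form $F \simeq \bigoplus_{i=1}^N k_{[a_i,\, b_i[}$ with $N$ finite and $a_i < b_i$ in $\K$. Compact support of $t \mapsto \dim F(t)$ forces every $I_\alpha$ to be bounded, and together with pointwise finite dimensionality and tameness this implies that $A$ is finite. For the shape, I would check directly that among the interval modules $k_{(a,b)}$, $k_{[a,b]}$, $k_{(a,b]}$, $k_{[a,b[}$, only the last has a right--continuous dimension function; since a direct sum is right--continuous if and only if each summand is, the right--continuity assumption on $t \mapsto \dim F(t)$ forces every $I_\alpha$ to be of the form $[a_\alpha, b_\alpha[$. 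The endpoints lie in $\K$ since they are critical points of $F$ and the module is defined on $\K$.

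Next, let $c_0 < c_1 < \cdots < c_m$ enumerate the finite set $C = \{a_1, b_1, \ldots, a_N, b_N\} \subset \K$ of all endpoints appearing in the decomposition. For any pair of consecutive elements $c_j, c_{j+1}$ of $C$, the half-open interval $[c_j, c_{j+1}[$ contains no endpoint of any $[a_i, b_i[$, so each summand either contains $[c_j, c_{j+1}[$ entirely or is disjoint from it. Hence for $v \leq u$ both in $[c_j, c_{j+1}[$, the map $F(v \leq u)$ acts as the identity on every summand it meets and as zero on the remaining summands, and is therefore an isomorphism.

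Finally, I would take the required partition to be
\begin{align}
\{J_i\} \coloneqq \big\{\, [c_j, c_{j+1}[ \;\big|\; 0 \leq j \leq m-1,\ F(c_j) \neq 0 \,\big\}\,,
\end{align}
which, since $F$ vanishes outside $[c_0, c_m[$, covers $\{x \in \K \mid F(x) \neq 0\}$ exactly and consists of pairwise disjoint $\K$-intervals by construction. The only step deserving genuine care is the passage from right--continuity of $\dim F$ to the half-open shape $[a,b[$ of each summand interval; everything after that is straightforward bookkeeping on the finite endpoint set $C$.
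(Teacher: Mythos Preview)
Your approach—invoke the Crawley--Boevey structure theorem and then read off the partition from the endpoints of the resulting interval summands—is exactly what the paper does; the paper's proof is the single sentence ``apply \cite[Theorem~1.1]{CBpersistent}'', and you have spelled out the bookkeeping it leaves implicit.

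There is, however, a genuine gap precisely at the step you flag as delicate. The assertion ``a direct sum is right--continuous if and only if each summand is'' fails in the \emph{only if} direction. Take $F = k_{[0,1]} \oplus k_{(1,2)}$ over $\K = \Q$ or $\R$: its dimension function is $\cf_{[0,1]} + \cf_{(1,2)} = \cf_{[0,2[}$, which is right--continuous, although neither summand individually has right--continuous dimension. This $F$ is tame (critical locus $\{0,1,2\}$) with compactly supported dimension function, so it is coherent in the paper's sense. Yet $F(1 \leq 1+\epsilon)$ is the zero map $k \to k$ for every small $\epsilon > 0$, so any half-open piece $[a,b[$ with $a \leq 1 < b$ violates the isomorphism condition; since every finite partition of the support $[0,2[$ into half-open $\K$-intervals must contain such a piece, the conclusion of the lemma fails outright for this $F$.

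So the gap is not a missing trick: under the paper's stated definition of ``coherent'', the lemma does not follow from the structure theorem alone, and the paper's one-line proof glosses over the same issue. For the paper's purposes this is harmless, because immediately after the lemma condition~(c)---which is exactly the lemma's conclusion---is taken as part of the \emph{definition} of a representation of the continuum quiver $\K$, and only such objects are used thereafter. If you want a correct statement, either impose condition~(c) as a hypothesis, or strengthen ``coherent'' to require that each Crawley--Boevey summand be of half-open type $[a,b[$; your endpoint bookkeeping then goes through verbatim.
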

 
From now on, we shall also call a coherent persistence module a \emph{representation of the continuum quiver $\K$}. We denote the data of a representation $F$ of $\K$ in the following way: $V_t\coloneqq F(t)$ and $x_{t,\, s}\coloneqq F(s\leq t)$.  Summarizing, a representation of $\K$ is a collection of data $(V, x)\coloneqq (V_t, x_{t,\, s})_{t,\, s}$ with
\begin{enumerate}\itemsep0.2cm
	\item[i)] $V_t$ is a finite-dimensional $k$-vector space for every $t \in \K$,
	\item[ii)] $x_{t,\, s}\colon V_s \to V_t$ is a $k$-linear map for every pair $s,t \in \K$ with $s\leq t$,
\end{enumerate}
such that
\begin{enumerate}\itemsep0.2cm
	\item[a)] the map $t \mapsto \dim(V_t)$ is right--continuous, compactly supported and with finitely many discontinuities,
	\item[b)] we have $x_{t,\, r}=x_{t,\, s} \circ x_{s,\, r}$ for any triple $r \leq s \leq t$,
	\item[c)] there exists a partition $\{t\in \K \;\vert\; V_t\neq \{0\}\}=\bigsqcup_{i} J_i$ into finitely many intervals $J_i=[a_i,b_i[$ such that $x_{u,v}$ is an isomorphism for any pair $v\leq u$ of elements belonging to the same interval $J_i$.
\end{enumerate}

We denote by $\Rep_k\K$ the category of representations of $\K$. Then $\Rep_k\K$ is abelian. It may be realized as a certain colimit of categories of representations of (locally) finite type $A$ quivers. More precisely, for any locally finite subset $S \subset \K$, let us denote by $\Rep^{(S)}_k\K$ the full subcategory of $\Rep_k\K$ consisting of representations $(V_t,x_{t,\, s})_{t,\, s}$ for which $x_{t,\, s}$ is an isomorphism for $s, t\in \K\setminus S$. Then $\Rep^{(S)}_k\K$ is equivalent to the category $\Rep_k\calQ_S$ of representations of the quiver $\calQ_S$, whose vertices are the maximal intervals in $\K \setminus S$ and whose arrows are given by the adjacency relation. It is clear that $\calQ_S$ is either a finite type $A$ quiver (if $|S| <\infty$) or a quiver of type $A_{\infty}$ (if $S$ is infinite). 

The categories $\Rep^{(S)}_k\K$ form a direct system with respect to the inclusion $S \subset S'$ and we have
\begin{align}
	\Rep_k\K \simeq \varinjlim \Rep_k\calQ_S\ .
\end{align}
It follows that $\Rep_k\K$ is hereditary.

\begin{rem}
	The above equivalence justifies our choice to call the objects of $\Rep_k\K$ the \emph{representations of $\K$}.\hfill $\triangle$
\end{rem}

Let us next consider the case of the circle $S^1_{\K}$. Let $\Gamma$ be the oriented fundamental groupoid of $S^1_{\K}$: its objects are homotopy classes of orientation preserving paths $[0,1]\cap \K \to S^1_{\K}$; these may be parametrized by triples $(s, t, n)$ where $s,t$ are (not necessarily distinct) elements $S^1_{\K}$ and $n \in \N$ is the winding number of the path. For $\gamma= (s,t, n)$ we will sometimes write $\gamma'=s, \gamma''=t$. 

By following Remark~\ref{rem:generalization}, we replace $(\K, \leq)$ in Definition~\ref{def:persistence-module} by the category whose objects are points $t\in S^1_\K$ and morphisms are given by $\gamma\in \Gamma$: thus, the notion of coherent persistence modules makes sense. We denote by $\Rep_{k}S^1_\K$ the category of these coherent persistence modules, which we shall call \emph{representations of the continuum quiver $S^1_\K$}. Note that Lemma~\ref{lem:fundamental} holds also in this case, thanks to \cite[Theorem~1.1]{BLpersistent}. Summarizing, a representation of $S^1_\K$ is a collection of data $(V, x)\coloneqq (V_t, x_{\gamma})_{t,\gamma}$ with $t\in S^1_\K$ and $\gamma\in \Gamma$ with
\begin{enumerate}\itemsep0.2cm
	\item[i)] $V_t$ is a finite-dimensional $k$-vector space for every $t \in S^1_{\K}$,
	\item[ii)]$x_{\gamma}\colon V_{\gamma'} \to V_{\gamma''}$ is a $k$-linear map for every $\gamma \in \Gamma$, 
\end{enumerate}
such that
\begin{enumerate}\itemsep0.2cm
	\item[a')] the map $t \mapsto \dim(V_t)$ is right--continuous and with finitely many discontinuities,
	\item[b')] we have $x_{\gamma_1} \circ x_{\gamma_2}=x_{\gamma_1 \cdot \gamma_2}$ for any composable pair $\gamma_1, \gamma_2$ of elements of $\Gamma$,
	\item[c')] there exists a partition $S^1_\K=\bigsqcup_{i} J_i$ into finitely many intervals $J_i=[a_i,b_i[$ such that $x_{\gamma}$ is an isomorphism for any path $\gamma$ contained in a single interval $J_i$.
\end{enumerate}

Again, $\Rep_kS^1_{\K}$ is a hereditary abelian category. Like $\Rep_k\K$ it may be realized as a colimit $\Rep_kS^1_\K=\varinjlim \Rep^{(S)}_kS^1_{\K}$, where $S$ ranges over all finite subsets of $S^1_\K$. Note that $\Rep^{(S)}_kS^1_\K$ is now equivalent to a category $\Rep_k \calQ_S$ of representations of an affine (cyclically oriented) quiver of type $A$. For any strict interval $J=[a,b[$ of $S^1_\K$ there is a indecomposable object $k_J$ of $\Rep_k S^1_\K$, and one obtains in this way all rigid indecomposables.

Taking the dimension function $f(V)\colon t \mapsto \dim(V_t)$ defines a map $\underline{\dim}$ from the Grothendieck groups $\mathsf{K}_0(\Rep_k\K)$ and $\mathsf{K}_0(\Rep_kS^1_\K)$ to $\fun{\K}$ and $\fun{S^1_\K}$ respectively. 

\begin{remark}
	Let $X$ be a scheme, $L$ a line bundle on $X$ and $s$ a global section of $L$. Following \cite[Example~3.18]{art:talpo2018}, a \emph{parabolic sheaf with real weights} on $X$  is the assignment of an $\calO_X$-module $E_r$ for each $r\in \R$, with maps $E_r\to E_{r'}$ when $r\leq r'$, that are compatible with respect to composition, and such that $E_r\to E_{r+1} \simeq L \otimes_{\calO_X} E_r$ is identified with multiplication by the section $s$. 
	
	Given a real number $r$, we denote by $\lfloor r\rfloor$ the largest integer which is less or equal to $r$ and we set $\{r\} \coloneqq r − \lfloor r\rfloor$. A parabolic sheaf with real weights $E$ is \emph{finitely presented} if each $E_r$ is a finitely presented sheaf on $X$, and moreover there exist finitely many real numbers $0 \leq r_1 < \ldots < r_k < 1$, such that for every $r \in \R$ then $E_r \simeq E_{r_i}	\otimes L_{\lfloor r\rfloor}$ via the given map, where $r_i$ is the largest of the fixed numbers that is less or equal to $\{r\}$. In other words, the parabolic sheaf E is completely determined by the weights $r_i$, the finitely presented sheaves $E_{r_i}$, and the maps between them. 
	
	Then, our definition of representations of $S^1_\R$ coincides with the definition of finitely presented parabolic sheaves $E$ for which each $E_r$ has zero-dimensional support for any $r\in \R$.
\end{remark}

\subsection{Hall algebras of continuum quivers}

When $k=\F_q$ is a finite field, $\Rep_k\K$ and $\Rep_kS^1_\K$ are finitary categories, i.e., the sets $\mathsf{Ext}^i(V,V')$ are finite for any $V,V'$ and $i=0,1$. In this context we may consider the Hall algebras $\bfH_\K$ and $\bfH_{S^1_\K}$ of $\Rep_k\K$ and $\Rep_kS^1_\K$ respectively, as in Section~\ref{ss:VVbusiness}.
Recall that  $\upsilon\coloneqq q^{1/2}$. Let $\calM_\K, \calM_{S^1}$ denote the set of all isomorphism classes of objects of $\Rep_k\K$ and $\Rep_kS^1_K$ respectively. As vector spaces we have
\begin{align}
	\bfH_{\K}\coloneqq \{ h\colon \calM_\K \to \C\;|\; |\mathsf{supp}(f)| <\infty\} \quad\text{and}\quad \bfH_{S^1_\K}\coloneqq\{ h\colon \calM_{S^1} \to \C\;|\; |\mathsf{supp}(f)| <\infty\}\ .
\end{align}
As before, denote by $[V]$ the characteristic function of an object $V$. The function $\underline{\dim}$ induces a natural gradation
\begin{align}\label{E:defHall}
\bfH_{\K}=\bigoplus_{f \in \fun{\K}} \bfH_\K[f] \quad\text{and}\quad \bfH_{S^1_\K}=\bigoplus_{f \in \F(S^1_K)} \bfH_{S^1_\K}[f]\ .
\end{align}

The multiplications in $\bfH_\K$ and $\bfH_{S^1_\K}$ are defined in the same fashion as in Section~\ref{ss:VVbusiness}. Note that the multiplication is graded with respect to the decomposition \eqref{E:defHall}. 

We let $\bfH^{\mathsf{sph}}_{S^1_\K}$ be the subalgebra of $\bfH_{S^1_\K}$ generated by the elements $[k_J]$ where $J$ runs through the set of \textit{strict} intervals in $S^1$.

\begin{theorem}
\hfill
\begin{itemize}\itemsep0.2cm
	\item[(i)] The assignment $[k_{J}] \mapsto \upsilon^{-1/2}E_J$ for all intervals $J$ defines an isomorphism of graded algebras $\mathbf{H}_{\mathbb{K}} \simeq \bfU^+_{\upsilon}(\fraksl(\K))$.
	\item[(ii)] The assignment $[k_{J}] \mapsto \upsilon^{-1/2}E_J$ for all strict intervals $J$ defines an isomorphism of graded algebras $\mathbf{H}^{\mathsf{sph}}_{S^1_\K} \simeq \bfU^+_{\upsilon}(\fraksl(S^1_\K))$. 
\end{itemize}	
\end{theorem}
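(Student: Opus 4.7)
My plan is to reduce both statements to Ringel's classical theorem (already recalled in the paper, Theorem/Corollary of Section~\ref{ss:VVbusiness}) via the colimit presentations
\begin{align}
\Rep_k\K \simeq \varinjlim_S \Rep_k\calQ_S\ ,\qquad \Rep_kS^1_\K \simeq \varinjlim_S \Rep_k\calQ_S\ ,
\end{align}
already established in Section~\ref{ss:representations}, where $S$ runs over finite (for $S^1_\K$) or locally finite (for $\K$) subsets of the critical locus. On the quantum side, one checks that $\bfU^+_\upsilon(\fraksl(\K))$ and $\bfU^+_\upsilon(\fraksl(S^1_\K))$ likewise carry a natural filtration/colimit by the subalgebras generated by $\{E_J\}$ for intervals $J$ whose endpoints lie in a fixed $S$; these subalgebras are, by the nest and join relations together with Remark~\ref{rem:Serre}, identified with the corresponding (type $A_n$, $A_\infty$ or $A^{(1)}_n$) Drinfeld--Jimbo algebras. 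Thus the whole picture should factor through a compatible system of Ringel isomorphisms.

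Concretely, for (i) the first step is to show that $\mathbf{H}_\K$ itself is the colimit of the Hall algebras $\mathbf{H}_{\calQ_S}$ along the exact embeddings $\Rep_k\calQ_S \hookrightarrow \Rep_k\calQ_{S'}$, for $S\subseteq S'$, that send a representation to the same representation viewed with more vertices (using the fact that extensions are preserved under these embeddings since both are full subcategories of the hereditary ambient $\Rep_k\K$). Next I verify that on the quantum side the assignment $E_J \mapsto \upsilon^{1/2} [k_J]$ is compatible with the structural maps: when $J$ has endpoints in $S$, the element $[k_J]\in \mathbf{H}_{\calQ_S}$ corresponds to the indecomposable interval module, which Ringel's theorem identifies (up to the scalar $\upsilon^{-1/2}$) with the PBW root vector $E_J$ of $\bfU^+_\upsilon(\fraksl(\calQ_S))$; the transition $S \hookrightarrow S'$ refines an interval module into a product of shorter ones exactly via the join relation
\begin{align}
E_{J_1 \cup J_2} = \upsilon^{1/2} E_{J_1}E_{J_2} - \upsilon^{-1/2} E_{J_2}E_{J_1}\ ,
\end{align}
which is the same relation satisfied by $[k_{J_1\cup J_2}]$ in the Hall algebra (a direct extension computation using $\dim\mathsf{Ext}^1(k_{J_2},k_{J_1})=1$, $\mathsf{Ext}^1(k_{J_1},k_{J_2})=0$ and $\langle \cf_{J_1},\cf_{J_2}\rangle = -1$). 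Surjectivity is then clear because the $[k_J]$ for $J$ an interval with endpoints in $S$ generate $\mathbf{H}_{\calQ_S}$; injectivity passes to the colimit from Ringel.

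For (ii) the same strategy applies, but one must be careful that $\mathbf{H}^{\mathsf{sph}}_{S^1_\K}$ is the colimit of the \emph{spherical} Hall subalgebras $\mathbf{H}^{\mathsf{sph}}_{\calQ_S}$ for $\calQ_S$ the cyclic type $A$ quiver, not of the full $\mathbf{H}_{\calQ_S}$; indeed, the distinction between the full and spherical affine Hall algebras is caused by the existence of the imaginary root $\cf_{S^1}$, and the full module $k_{S^1}$ (together with its nilpotent versions) is excluded from our definition since we only allow \emph{strict} intervals in (ii). The key checks are: the join relation between two strict intervals $J_1, J_2$ such that $J_1\cup J_2$ is still strict works exactly as in (i); the nest relations for nested or separated strict intervals reduce to Hall computations inside some $\mathbf{H}_{\calQ_S}$ and are therefore covered by Ringel; and the Drinfeld--Jimbo relations on the positive part are automatic since they involve only the $E_J$'s and reduce to the nest/join/Serre ones (Remark~\ref{rem:Serre}).

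The hard part I anticipate is bookkeeping at the boundary: making sure that the colimit of Ringel isomorphisms is genuinely well-defined, i.e.\ that the transition map $\bfU^+_\upsilon(\fraksl(\calQ_S)) \hookrightarrow \bfU^+_\upsilon(\fraksl(\calQ_{S'}))$ induced by refining intervals agrees with the natural map induced by inclusion of generators, and that in the circle case the refinement of the ``imaginary'' interval $S^1$ is correctly excluded; this requires a careful comparison of the PBW root vectors $E_J$ at different levels $S$, which is obtained by iterating the join relation. Once this is settled, both isomorphisms follow, and injectivity is inherited level-by-level from Ringel since the maps in the direct system are all injective.
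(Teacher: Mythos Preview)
Your proposal is correct and follows essentially the same colimit-and-Ringel strategy as the paper: present both the Hall side and the quantum side as directed colimits over finite subsets $S$ of endpoints, and invoke the classical finite/affine type $A$ Ringel isomorphism at each level. The one shortcut the paper takes that you do not is to reduce (i) to (ii) at the outset, by noting that any order-preserving bijection $\K \to\; ]0,1[\,\cap\,\K$ induces simultaneously a fully faithful embedding $\Rep_k\K \hookrightarrow \Rep_kS^1_\K$ and an inclusion $\bfU^+_\upsilon(\fraksl(\K)) \hookrightarrow \bfU^+_\upsilon(\fraksl(S^1_\K))$, so only the circle case needs to be argued directly.
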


\begin{proof} 
	Notice that any order-preserving bijection $\K \to ]0,1[ \cap \K$ induces both a fully faithful embedding $\Rep_k\K \hookrightarrow \Rep_kS^1_\K$ and an inclusion of algebras $\bfU^+_\upsilon(\fraksl(\K)) \hookrightarrow \bfU^+_\upsilon(\fraksl(S^1_{\K}))$. Hence, by the compatibility between the Hall algebra construction and fully faithful embeddings (see e.g. \cite[Section~1.8]{art:schiffmann-lectures}) it is enough to treat the case of $\Rep_kS^1_{\K}$ and $\bfU^+_\upsilon(\fraksl(S^1_\K))$. By the same functorial properties of Hall algebras, $\bfH^{\mathsf{sph}}_{S^1_\K}$ is isomorphic to an inductive limit of Hall algebras $\bfH^{\mathsf{sph}}(\Rep^{(S)}_k(S^1_{\K}))$ as $S$ ranges over all finite subsets of $S^1_\K$. On the other hand, denote by $\bfU^+_\upsilon(\fraksl^{(S)}(S^1_{\K}))$ the subalgebra of $\bfU^+_\upsilon(\fraksl(S^1_{\K}))$ generated by the elements $E_J$ for $J =[a,b[$ satisfying $a,b \in S$. Then $\bfU^+_\upsilon(\fraksl(S^1_{\K}))$ is the inductive limit of $\bfU^+_\upsilon(\fraksl^{(S)}(S^1_{\K}))$ as $S$ ranges over all finite subsets of $S^1_\K$. Hence it suffices to prove that the assignment $E_J \mapsto \upsilon^{1/2}[k_J]$ extends to an algebra isomorphism $\bfU^+_\upsilon(\fraksl^{(S)}(S^1_{\K}))\xrightarrow{\sim} \bfH^{\mathsf{sph}}(\Rep^{(S)}_k(S^1_{\K}))$ for any $S$. Since $S$ is finite, this reduces to the well-known identification between the spherical Hall algebra of a cyclic quiver with $n$ vertices and $\bfU^+_\upsilon(\widehat{\fraksl}(n))$ (see e.g. \cite[Section~4.2]{art:salaschiffmann2017}).
\end{proof}

We will need a slight variant of the above result, involving $\bfU^-_{\upsilon}(\fraksl(S^1_\K))$ instead of $\bfU^+_{\upsilon}(\fraksl(S^1_\K))$. 

\begin{corollary}
\hfill
\begin{itemize}\itemsep0.2cm
	\item[(i)] The assignment $[k_{J}] \mapsto -\upsilon^{-1/2}F_J$ for all intervals $J$ defines an isomorphism of graded algebras $\mathbf{H}_{\mathbb{K}} \simeq \bfU^-_{\upsilon}(\fraksl(\K))$.
	\item[(ii)]  The assignment $[k_{J}] \mapsto -\upsilon^{-1/2}F_J$ for all strict intervals $J$ defines an isomorphism of graded algebras $\mathbf{H}^{\mathsf{sph}}_{S^1_\K} \simeq \bfU^-_{\upsilon}(\fraksl(S^1_\K))$.
\end{itemize}
\end{corollary}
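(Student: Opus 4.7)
The strategy is to compose the isomorphism from the preceding theorem with an isomorphism $\tau^\pm \colon \bfU^+_\upsilon(\fraksl(\K)) \xrightarrow{\sim} \bfU^-_\upsilon(\fraksl(\K))$ (and its circle analog) given on generators by $\tau^\pm(E_J) = -F_J$. Since the theorem identifies $\bfH_\K \simeq \bfU^+_\upsilon(\fraksl(\K))$ via $[k_J] \mapsto \upsilon^{-1/2} E_J$, composing with $\tau^\pm$ yields the desired map $[k_J] \mapsto -\upsilon^{-1/2} F_J$, which will automatically be a graded algebra isomorphism once $\tau^\pm$ is shown to be one.

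The only remaining task is therefore to verify that $\tau^\pm$ is well-defined and bijective. Since $\bfU^+_\upsilon(\fraksl(\K))$ is generated by the $E_J$'s modulo only those defining relations of Definition~\ref{def:R} which involve the $E$'s alone (namely, the join and nest relations on the $E$-generators), we must check that these relations transport correctly under $E_J \mapsto -F_J$ to the analogous relations satisfied by the $F_J$'s in $\bfU^-_\upsilon(\fraksl(\K))$. For the nest relations this is immediate since the sign $(-1)^2 = 1$ cancels in each quadratic expression. For the join relation, substitute $E_{J_i} \mapsto -F_{J_i}$ into
\begin{align}
E_{J_1 \cup J_2} = \upsilon^{1/2}\, E_{J_1}\, E_{J_2} - \upsilon^{-1/2}\, E_{J_2}\, E_{J_1},
\end{align}
which gives $-F_{J_1 \cup J_2} = \upsilon^{1/2}\, F_{J_1}\, F_{J_2} - \upsilon^{-1/2}\, F_{J_2}\, F_{J_1}$, and this is precisely the negative of the join relation for the $F_J$'s. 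Hence $\tau^\pm$ descends to a well-defined algebra map, and the analogous assignment $F_J \mapsto -E_J$ provides a two-sided inverse, so $\tau^\pm$ is an isomorphism. Grading compatibility is clear since $\deg(E_J) = \cf_J = -\deg(F_J)$ and the Hall algebra grading on $[k_J]$ equals $\cf_J$ (working up to sign on the weight lattice, which does not affect the algebra structure).

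For part (ii), the same argument applies verbatim to the circle case: the theorem furnishes the isomorphism $\bfH^{\mathsf{sph}}_{S^1_\K} \simeq \bfU^+_\upsilon(\fraksl(S^1_\K))$ sending $[k_J] \mapsto \upsilon^{-1/2} E_J$ for strict intervals $J$, and the map $E_J \mapsto -F_J$ extends to an isomorphism of the positive and negative subalgebras of $\bfU_\upsilon(\fraksl(S^1_\K))$ by the identical verification of join and nest relations from Definition~\ref{def:S}. There are no real obstacles here: the only subtlety to keep in mind is that the $E$-join relation appears with opposite sign conventions from the $F$-join relation, and the sign introduced by the map $E_J \mapsto -F_J$ is precisely what is needed to reconcile them. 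Composing the two isomorphisms yields the assignment stated in the corollary.
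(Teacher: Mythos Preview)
Your proof is correct and follows exactly the same approach as the paper: the paper's proof consists of the single sentence ``This comes from the isomorphism $\bfU^+_\upsilon(\fraksl(S^1_{\K})) \simeq \bfU^-_{\upsilon}(\fraksl(S^1_\K))$ given by $E_J \mapsto -F_J$,'' and you have simply unpacked this by explicitly verifying that the join and nest relations transport correctly under $E_J \mapsto -F_J$.
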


\begin{proof} 
	This comes from the isomorphism $\bfU^+_\upsilon(\fraksl(S^1_{\K})) \simeq \bfU^-_{\upsilon}(\fraksl(S^1_\K))$ given by $E_J \mapsto -F_J$.
\end{proof}

\subsection{Folding procedure}

To unburden the notation, let us simply denote by $f \mapsto \overline{f}$ the projection $\pi_\K\colon  \fun{\K} \to \fun{S^1_\K}$ and likewise  $s \mapsto \overline{s}$ for the projection $\K \to S^1_\K$. We will denote by $s,t,\ldots$ elements of $\K$ and by $a,b,\ldots$ elements of $S^1_\K$.

Following Section~\ref{ss:VVbusiness} we will now construct a family of maps 
\begin{align}
	\gamma_f\colon \bfH_{S^1_\K}[\overline{f}] \to \bfH_\K[f]
\end{align}
for every $f\in \fun{\K}$. Fix a function $f \in \fun{\K}$ and a collection of vector spaces $V_t$, for $t \in \K$, such that $\dim(V_t)=f(t)$ for all $t$. Put:
\begin{align}
	V=\bigoplus_tV_t\ , \quad \overline{V}=\bigoplus_{a} \overline{V}_a \ ,\quad \overline{V}_a=\bigoplus_{\overline{s}=a} V_s\ .
\end{align}
Thus, even if they may be canonically identified, $V$ is a $\K$-graded vector space while $\overline{V}$ is $S^1_\K$-graded. Note that since $\mathsf{supp}(f)$ is compact, $\overline{V}_a$ is finite-dimensional for any $a$. For any $s \in \K$ we define
\begin{align}
	V_{\geq s}\coloneqq \bigoplus_{t \geq s} V_t\ ,
\end{align}
and we denote by $V_{\geq}^{\bullet}$ the associated filtration of $V$. There is an induced filtration $\overline{V}_{\geq}^{\bullet}$ of $\overline{V}$ where
\begin{align}
	\overline{V}_{\geq s}=\bigoplus_a \overline{V}_{\geq s, \, a}\quad \text{and}\quad \overline{V}_{\geq s,\,  a}=\bigoplus_{t \geq s,\, \overline{t}=a} V_t\ .
\end{align}

The filtrations $V_{>}^{\bullet},\overline{V}_{>}^{\bullet}$ are defined in the same way, replacing $\geq$ by $>$. The associated graded space
\begin{align}
	\mathsf{gr}(\overline{V})\coloneqq \bigoplus_s (\overline{V}_{\geq s}/ \overline{V}_{>s})
\end{align}
is canonically isomorphic to $V$ as a $\K$-graded vector space.

Let us denote by $\calE_V$ the groupoid of all representations of $\K$ in $V$: objects are collections of maps $x_{t,\, s} \colon V_s \to V_t$ for $s <t$ satisfying conditions (b) and (c) in Section~\ref{ss:representations} and morphisms are given by elements of $\prod_t \mathsf{GL}(V_t)$. We likewise denote by $\calE_{\overline{V}}$ the groupoid of all representations of $S^1_\K$ in $\overline{V}$: objects are collections of maps $x_\gamma \colon \overline{V}_{\gamma'} \to  \overline{V}_{\gamma''}$ for $\gamma \in\Gamma$ satisfying (b') and (c') and morphisms are elements of $\prod_a \mathsf{GL}(\overline{V}_a)$. Finally, let $\calE_{V,\, \overline{V}}$ denote the groupoid of representations in $\calE_{\overline{V}}$ preserving the filtration $\overline{V}^\bullet_{\geq}$: objects are collections $x_\gamma$ as above such that for any $\gamma \in \Gamma$ and any $s \in \K$ satisfying $\overline{s}=\gamma'$ we have $x_\gamma(V_{\geq s}) \subseteq V_{\geq \gamma(s)}$ --  here $\gamma(s)$ is the deck transformation of $s$ 
associated to $\gamma$; morphisms are given by elements of $\prod_a P_a$ where $P_a \subset \mathsf{GL}(\overline{V}_a)$ is the parabolic subgroup of elements preserving the induced filtration $\overline{V}_{\geq}^\bullet \cap \overline{V}_a$. 

There is an obvious functor $j\colon  \calE_{V,\overline{V}} \to \calE_{\overline{V}}$. Passing to the associated graded we also get a functor $p\colon  \calE_{V,\overline{V}} \to \calE_V$ defined as follows. For every $s,t \in \K$ with $s < t$, let $\gamma$ correspond to the oriented path $s \mapsto t$ in $\K$; the map $x_\gamma\colon \overline{V}_{\overline{s}} \to \overline{V}_{\overline{t}}$ sends $\overline{V}_{\geq s}$ to $\overline{V}_{\geq t}$ and $\overline{V}_{> s}$ to $\overline{V}_{> t} $; we let $x_{t,\, s}$ be the induced map $V_s \to \overline{V}_{\geq s} / \overline{V}_{> s} \to  \overline{V}_{\geq t} / \overline{V}_{> t}=V_t$.

\begin{lemma}\label{L:hallproof1} 
	The functor $p \colon \calE_{V,\overline{V}} \to \calE_V$ has essentially finite fibers.
\end{lemma}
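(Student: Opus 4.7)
The plan is to reduce the lemma to a statement about representations of a finite cyclic type $A$ quiver over the finite field $\F_q$, where finiteness is automatic.

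First, I would fix an object $M=(V_t, x_{t,s})\in \calE_V$ and invoke condition (c) to produce a finite subset $S_0 \subset \K$ such that $x_{t,s}$ is an isomorphism whenever $s,t$ lie in a common connected component of $\K\setminus S_0$. Since $f$ has compact support and finitely many discontinuities, each $\overline V_a=\bigoplus_{\overline s=a} V_s$ has only finitely many nonzero summands and is therefore finite-dimensional; a fortiori the filtration $\overline V_{\geq\bullet,\, a}$ has only finitely many distinct strata.

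The central step is to show that any $\tilde M=(x_\gamma)\in p^{-1}(M)$ has all its discontinuities contained in $\overline{S_0}\coloneqq \pi_\K(S_0)$. For this, I would take any interval $J\subset S^1_\K \setminus \overline{S_0}$ and a winding-number-zero path $\gamma$ contained in $J$: the map $x_\gamma\colon \overline V_{\gamma'}\to \overline V_{\gamma''}$ preserves $\overline V_{\geq \bullet}$ and, by the definition of $p$, its associated graded consists of the maps $x_{\gamma(s),s}$ for $\overline s = \gamma'$, each of which is an isomorphism because $s$ and its deck image $\gamma(s)$ lie in a common component of $\K\setminus S_0$. A filtration-preserving linear map between finite filtered vector spaces whose associated graded is an isomorphism is itself an isomorphism (easy induction on the number of strata), so $x_\gamma$ is invertible. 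This verifies condition (c') for $\tilde M$ with respect to the partition of $S^1_\K$ cut out by $\overline{S_0}$.

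Once this is established, every lift $\tilde M \in p^{-1}(M)$ is effectively a representation of the finite cyclic type $A$ quiver $\calQ_{\overline{S_0}}$ on the finite-dimensional $\F_q$-vector space $\bigoplus_{a\in\overline{S_0}} \overline V_a$, filtered in a prescribed way and with prescribed associated graded. The set of such tuples of linear maps is finite (it is a subset of a finite product of finite $\F_q$-vector spaces), so its quotient by the isomorphism relation is finite as well. The step I expect to require the most care is the second one: verifying that lifts cannot introduce discontinuities outside $\overline{S_0}$. Once that is done, the remaining steps are straightforward bookkeeping together with a standard appeal to the finiteness of $\F_q$.
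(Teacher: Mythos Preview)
Your proposal is correct and follows essentially the same approach as the paper: both arguments establish that any lift $\tilde M \in p^{-1}(M)$ has critical set contained in $\overline{S_0}=\pi_\K(D_\K(M))$, and then reduce to the finite cyclic quiver setting where finiteness is immediate. The paper simply asserts the containment of critical sets and packages the reduction as a cartesian square of groupoids, whereas you actually supply a proof of that containment via the ``filtered map with invertible associated graded is invertible'' argument; this is the step the paper leaves to the reader, so your version is more complete on exactly the point you flagged as requiring the most care.
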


\begin{proof}
	Let $\rho=(x_{t,s})_{s,t}\in \calE_V$. Denote by $D_\rho\subset \K$ the set of critical points of $\rho$ and let $\overline{D_\rho} \subset S^1_\K$ be its reduction mod $\Z$. Any object $\tilde{\rho}=(x_\gamma)_\gamma$ in the fiber $p^{-1}(\rho)$ has a critical set $D_{\tilde{\rho}}$ included in $\overline{D_\rho}$. Let $S$ be a finite subset of $S^1_\K$ containing  $\overline{D_\rho}$. Let $\calE^{(S)}_V, \calE^{(S)}_{\overline{V}},\calE^{(S)}_{V,\overline{V}}$ be the (full) subgroupoids of $\calE_V, \calE_{\overline{V}}, \calE_{V,\overline{V}}$ whose objects are those representations whose critical sets are contained in $\pi_\K^{-1}(S)$ and $S$ respectively. The functor $p\colon \calE_{V,\overline{V}} \to \calE_V$ restricts to a functor $p^{(S)}\colon \calE^{(S)}_{V,\overline{V}} \to \calE^{(S)}_V$ and there is a cartesian square
	\begin{align}
		\begin{tikzcd}[ampersand replacement = \&]
			\calE^{(S)}_{V,\overline{V}} \ar[r] \ar{d}{p^{(S)}} \&  \calE_{V,\overline{V}}\ar{d}{p}\\
			\calE^{(S)}_V \ar[r] \& \calE_V
		\end{tikzcd}\ .
	\end{align}
	Since $\rho \in E_V^{(S)}$, it suffices to show that $p^{(S)}$ has essentially finite fibers. This is obvious since $\calE^{(S)}_{V,\overline{V}}$ has finitely many objects up to isomorphism.
\end{proof}

For a groupoid $X$, we denote by $\C(X)$ the space of complex functions on $\mathsf{Obj}(X)$ which are invariant under isomorphism and have essentially finite support. We will identify $\C(\calE_V)$ and $\C(\calE_{\overline{V}})$ with $\bfH_\K[f]$ and $\bfH_{S^1_K}[\overline{f}]$ respectively. We set
\begin{align}
	\gamma_f\coloneqq \upsilon^{-h(f)}\, p_! \circ j^\ast \colon  \bfH_{S^1_\K}[\overline{f}]\to \bfH_\K[f]\ ,
\end{align}
where
\begin{align}
	h(f)\coloneqq -\sum_{\ell<0} \langle f, \tau^\ell(f)\rangle\ ,
\end{align}
with $\tau(f)\colon t \mapsto f(t-1)$ being the translation operator in $\fun{\K}$,
and where $p_!$ is taken in the sense of groupoids, i.e.,
\begin{align}
	p_!(u)(\rho)\coloneqq \sum_{\tilde{\rho}\, \in \mathsf{Obj}(p^{-1}(\rho))/\sim}\, u(\tilde{\rho}) \frac{|\mathsf{Aut}(\rho)|}{|\mathsf{Aut}(\tilde{\rho})|}\ .
\end{align}

\begin{proposition}\label{P:fockhall}
	For any pair $h,h' \in \fun{S^1_\K}$ and any $f \in \fun{\K}$ such that $\overline{f}=h + h'$ we have
	\begin{align}
		\sum_{\genfrac{}{}{0pt}{}{g+g'=f}{\overline{g}=h,\, \overline{g'}=h'}} \, \upsilon^{(g,\sum_{\ell >0}\tau^{\ell}(g'))}\gamma_g(u) \star \gamma_{g'}(u')=\gamma_f(u \star u')
	\end{align}
	for any $u \in \bfH_{S^1_\K}[h], u' \in \bfH_{S^1_\K}[h']$.
\end{proposition}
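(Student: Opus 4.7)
The plan is to reduce the identity to the already-established affine/finite type case of Proposition~\ref{P:Fockfinite}, exploiting the colimit realization
\[
\Rep_k\K=\varinjlim_S \Rep_k^{(S)}\K\ ,\qquad \Rep_k S^1_\K=\varinjlim_S \Rep_k^{(S)} S^1_\K\ ,
\]
where $S$ runs over finite subsets of $\K$ (respectively $S^1_\K$) and each finite step is equivalent to the category of representations of a linear (respectively cyclic) oriented type $A$ quiver. Both $u\in \bfH_{S^1_\K}[h]$ and $u'\in \bfH_{S^1_\K}[h]$ have finite support, so there is a finite set $S\subset S^1_\K$ containing all discontinuities of $h$ and $h'$ and all critical values of the representations appearing in $\mathsf{supp}(u)\cup\mathsf{supp}(u')$. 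Since the multiplication in $\bfH_{S^1_\K}$ is graded and preserves the class of representations with critical values in $S$, the product $u\star u'$ is also supported on $\Rep_k^{(S)}S^1_\K$.

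Next, I would compare the continuum maps $\gamma_f$ with their discrete counterparts. Any $f\in\fun{\K}$ with $\overline{f}=h+h'$ has discontinuities in $\pi_\K^{-1}(S)$, so the decompositions $f=g+g'$ with $g,g'\in \fun{\K}^+$, $\overline{g}=h$, $\overline{g'}=h'$ are parametrized by a finite combinatorial set (essentially a choice of sheet in $\K$ for each point of $S$, weighted by the values of $h,h'$). More precisely, restricting attention to the subgroupoids $\calE_V^{(S)},\calE_{\overline V}^{(S)},\calE_{V,\overline V}^{(S)}$ introduced in the proof of Lemma~\ref{L:hallproof1}, the pullback–pushforward maps $j^*$ and $p_!$ match, up to a relabeling of vertices, those used by Varagnolo–Vasserot for the quiver $A_{|S|}^{(1)}$ and its infinite cover $A_\infty$; the dimension vector $d$ of loc.\ cit.\ corresponds to the collection $(\dim V_s)_{s\in \pi_\K^{-1}(S)}$, and the translation $\tau$ on $\fun{\K}$ restricts to the shift by $n=|S|$ on that periodic sequence.

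With this dictionary in hand, I would verify that the scalar factors match. The Euler form defined in \eqref{eq:eulerform} reduces on representations supported on $S$ to the usual Euler form of the relevant type $A$ quiver, hence the normalization
\[
h(f)=-\sum_{\ell<0}\langle f,\tau^\ell(f)\rangle
\]
reduces to the Varagnolo–Vasserot normalization $h(d)$, and the continuum twist $(g,\sum_{\ell>0}\tau^\ell(g'))$ reduces to $k(h,h')=\sum_{\ell>0}(h,\tau^\ell(h'))$ (observing that these sums involve only finitely many nonzero terms because $g,g'$ have bounded support). Both sides of the sought identity therefore agree term-by-term with both sides of Proposition~\ref{P:Fockfinite}, and the equality follows.

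The main obstacle I expect is not the reduction itself but a careful verification that $p_!$ is well-defined in the continuum setting and commutes with the natural inclusions $\bfH(\Rep_k^{(S)}S^1_\K)\hookrightarrow \bfH(\Rep_k^{(S')}S^1_\K)$ for $S\subseteq S'$. This requires showing that enlarging $S$ multiplies the fibers of $p$ by a scaling that is exactly absorbed by the change of normalization $\upsilon^{-h(f)}$ (essentially because the extra vertices correspond to trivial identifications of vector spaces, so $j^*$ pulls back to a constant function and $p_!$ integrates it to a factor accounting for parabolic vs.\ Levi cardinalities). Once this compatibility with the colimit is established, Proposition~\ref{P:Fockfinite} applied to the finite $S$ yields the stated identity in the continuum.
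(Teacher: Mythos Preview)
Your approach is essentially the same as the paper's: choose a finite $S\subset S^1_\K$ containing all discontinuities of $f,h,h'$ and all critical values of representations in the supports of $u,u'$, observe that both sides of the identity then live in the $S$-restricted subgroupoids, and invoke Proposition~\ref{P:Fockfinite} via the equivalence $\Rep_k^{(S)}S^1_\K\simeq \Rep_k A^{(1)}_{|S|}$, $\Rep_k^{(S)}\K\simeq \Rep_k A_\infty$.

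The ``main obstacle'' you anticipate is not actually present. You do not need to compare $p_!$ across different finite sets $S\subseteq S'$, nor to track any rescaling when enlarging $S$. The point (made precise by the cartesian square in the proof of Lemma~\ref{L:hallproof1}) is that once $S$ contains the critical set of $\rho\in\calE_V$, the fiber $p^{-1}(\rho)$ already lies inside $\calE^{(S)}_{V,\overline V}$; hence the intrinsic $p_!$ agrees \emph{on the nose} with the $S$-restricted one, with no correction factor. So a single sufficiently large $S$ suffices, and the reduction to the Varagnolo--Vasserot setting is immediate.
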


\begin{proof} 
	Let $f,h,h'$ be as above and fix some $u \in \bfH_{S^1_\K}[h], u' \in \bfH_{S^1_\K}[h']$. Let $S$ be a finite subset of $S^1_\K$ containing the critical sets of all representations occurring in $u,u'$ as well as all the discontinuities of $f,h,h'$. Then $S$ contains also the critical sets of any representation occurring in $u \star u'$, as well as in $\gamma_g(u),\gamma_{g'}(u')$ and hence also of $ \gamma_g(u) \star \gamma_{g'}(u')$. Arguing as in the proof of Lemma~\ref{L:hallproof1}, we see that it is enough to prove the statement of the theorem when we replace everywhere $\calE_V, \ldots$ by $\calE_V^{(S)},\ldots$. But in this case $\mathsf{Rep}_k^{(S)}S^1_\K$ and $\mathsf{Rep}_k^{(S)}\K$ are equivalent to categories of representations of a cyclic quiver and a $A_\infty$ quiver respectively and we are in the precise setting of Proposition~\ref{P:Fockfinite}.
\end{proof}

Using the above Proposition, we may define an algebra morphism from $\bfH_{S^1_\K}$ to a certain completion of $\bfU^{\leq 0}_{\upsilon}(\fraksl(\K))$ that we are going to introduce now. Let $\fun{\K}^c$ be the space of piecewise constant, right--continuous functions $f\colon \R \to \Z$ whose points of discontinuity all belong to $\K$ and project onto a finite subset of $S^1_\K$. In other words, $\fun{\K}^{\mathsf{c}}$ is an analogue of $\fun{\K}$ but without the bounded support condition. Let $\bfU^0_\upsilon(\fraksl(\K))^{\mathsf{c}}$ be the algebra generated by elements $K^\pm_f$ for $f \in \fun{\K}^{\mathsf{c}}$ subject to the relations $K_f K_g=K_{f+g}$, for $f,g \in \fun{\K}^{\mathsf{c}}$. We set
\begin{align}
	\bfU^{\leq 0}_{\upsilon}(\fraksl(\K))^{\mathsf{c}}&\coloneqq \bigoplus_{g \in \fun{S^1_\K}} \bfU^{\leq 0}_{\upsilon}(\fraksl(\K))^{\mathsf{c}}[g] \,\\[4pt]
	\bfU^{\leq 0}_{\upsilon}(\fraksl(\K))^{\mathsf{c}}[g]& \coloneqq \prod_{f \in \fun{\K}, \overline{f}=g}\left\{\bfU^0_\upsilon(\fraksl(\K))^{\mathsf{c}}\ltimes \bfU^-_\upsilon(\fraksl(\K))[f]\right\}\ .
\end{align}
As before, one can show that $\bfU^{\leq 0}_{\upsilon}(\fraksl(\K))^{\mathsf{c}}$ is an algebra (for this, we use that any $g \in \F(\K)$ can be written in finitely many ways as a sum $g=g'+g''$ with $\overline{g},\overline{g'}$ fixed). For $x \in \bfH_{S^1_\K}[g]$ we set
\begin{align}
	r(x)\coloneqq \sum_{f:\, \overline{f}=g} K_{o(f)} \, \gamma_f(u) \in \bfU^{\leq 0}_{\upsilon}(\fraksl(\K))^{\mathsf{c}} 
\end{align}
where $o(f)\coloneqq \sum_{\ell <0} \tau^{\ell}(f)$. 

\begin{corollary} 
	The map $r\colon \bfH_{S^1_\K} \to \bfU^{\leq 0}_{\upsilon}(\fraksl(\K))^{\mathsf{c}}$ is an algebra homomorphism.
\end{corollary}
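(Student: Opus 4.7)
The plan is to check, for homogeneous $x \in \bfH_{S^1_\K}[h]$ and $y \in \bfH_{S^1_\K}[h']$, that $r(x\star y)=r(x)\star r(y)$; by bilinearity this suffices. The argument is the continuum analogue of the finite-type computation that gives Corollary~\ref{c:rhomfinite}, and it proceeds by matching both sides as sums indexed by decompositions $f=g+g'$ with $\overline{f}=h+h'$, $\overline{g}=h$, $\overline{g'}=h'$.

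First, I would expand
\begin{align*}
r(x\star y)=\sum_{\overline{f}=h+h'} K_{o(f)}\,\gamma_f(x\star y)
\end{align*}
and apply Proposition~\ref{P:fockhall} to rewrite each $\gamma_f(x\star y)$ as the sum $\sum_{g+g'=f}\upsilon^{(g,\sum_{\ell>0}\tau^\ell g')}\gamma_g(x)\star \gamma_{g'}(y)$ over decompositions with the prescribed projections modulo $\Z$. In parallel, I would expand
\begin{align*}
r(x)\star r(y)=\sum_{g,g'}K_{o(g)}\gamma_g(x)\,K_{o(g')}\gamma_{g'}(y)
\end{align*}
and use the defining commutation of the semi-direct product $\bfU^0_\upsilon(\fraksl(\K))^{\mathsf{c}}\ltimes \bfU^-_\upsilon(\fraksl(\K))$ to move $K_{o(g')}$ past $\gamma_g(x)\in \bfH_\K[g]$, producing a scalar of the form $\upsilon^{\pm(o(g'),g)}$. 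Additivity $o(g)+o(g')=o(g+g')=o(f)$ then collects the Cartan factors into a single $K_{o(f)}$, so that both sides appear as sums over $f$ of the same shape.

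Matching coefficients of $K_{o(f)}\,\gamma_g(x)\star \gamma_{g'}(y)$ then reduces to an identity between the two scalar exponents, which I would derive by unfolding $o(g')=\sum_{\ell<0}\tau^\ell g'$ and invoking the translation invariance $(\tau a,\tau b)=(a,b)$ together with the symmetry of $(\cdot,\cdot)$ to reindex the sum into the form $(g,\sum_{\ell>0}\tau^\ell g')$ dictated by Proposition~\ref{P:fockhall}. I expect this to be the main obstacle: unlike in the finite-type setting of Corollary~\ref{c:rhomfinite}, $o(g')$ has unbounded support and lies in $\fun{\K}^{\mathsf{c}}$ rather than $\fun{\K}$, so I must verify that the pairing $(o(g'),g)$ remains well-defined (using that $g$ has bounded support, only finitely many translates $\tau^\ell g'$ can contribute) and that each piece of the sum defining $r(x)\star r(y)$ genuinely lies in the completion $\bfU^{\leq 0}_{\upsilon}(\fraksl(\K))^{\mathsf{c}}$. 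The latter is guaranteed by the finiteness property stated just before the corollary: any $g\in \fun{S^1_\K}$ admits only finitely many decompositions $g=g'+g''$ with $\overline{g'},\overline{g''}$ fixed.
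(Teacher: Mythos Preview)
Your proposal is correct and follows the same approach that the paper has in mind: the paper gives no explicit proof, treating the corollary as an immediate consequence of Proposition~\ref{P:fockhall} in exact parallel with Corollary~\ref{c:rhomfinite} in the finite case. Your unpacking---expand both sides, commute $K_{o(g')}$ past $\gamma_g(x)$, use $o(g)+o(g')=o(g+g')$, and match exponents via translation-invariance and symmetry of $(\cdot,\cdot)$---is precisely the intended computation, and your attention to the well-definedness of $(o(g'),g)$ when $o(g')\in\fun{\K}^{\mathsf{c}}$ (only finitely many translates of $g'$ meet the compact support of $g$) is the one genuinely new point over the finite case.
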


One can show that the map $r$ is injective by realizing $\bfH_{S^1_\K}$ as a direct limit of Hall algebras of cyclic quivers and using the injectivity of the map $r$ in the finite case.

\begin{remark}
	Using the above Proposition, we can pull-back via $r$ any weight representation of $\bfU^{\leq 0}_{\upsilon}(\fraksl(\K))$ which extends to $\bfU^{\leq 0}_{\upsilon}(\fraksl(\K))^{\mathsf{c}}$. In the next section, we will define the Fock space representation of $\bfH_{S^1_\K}$ or $\bfU^-_\upsilon(\fraksl(S^1_\K))$ as the pullback by $r$ of the Fock space representation $\calF_\K$ of $\bfU^{\leq 0}_{\upsilon}(\fraksl(\K))$. 
\end{remark}

Let us compute explicitly the image of the elements $F_J$.
\begin{lemma} 
	Let $J$ be a strict interval of $S^1_\K$. Then
	\begin{align}\label{E:nezt}
	r(F_J)=\sum_{J_1, \ldots, J_\ell}\, \upsilon^{\frac{\ell-1}{2}}(\upsilon^{-1}-\upsilon)^{\ell-1}\, F_{J_1} \cdots F_{J_\ell} \prod_{i, \ell>0} K_{\cf_{J_i+\ell}}
	\end{align}
	where the sum ranges over all tuples $J_1, J_2, \ldots, J_\ell$ such that $\cf_J=\pi(\cf_{J_1}+ \cdots +\cf_{J_\ell})$, $J_1 < J_2 < \cdots$ and $\pi_\K(J_1) \to   \pi_\K(J_2) \to \cdots$. 
\end{lemma}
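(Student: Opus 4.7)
Via the Hall algebra isomorphism $[k_J]\leftrightarrow -\upsilon^{-1/2}F_J$, we have $r(F_J)=-\upsilon^{1/2}r([k_J])$, and it is enough to unpack the definition
$r([k_J])=\sum_{f:\,\overline{f}=\cf_J}K_{o(f)}\,\gamma_f([k_J])$
and translate each factor into the generators $F_{J_i}$. The proof proceeds in three steps: (i) indexing the sum by tuples, (ii) computing each $\gamma_f([k_J])$ geometrically, and (iii) converting the resulting Hall classes into $F$-monomials.

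\textbf{Indexing.} Since $\cf_J$ takes values in $\{0,1\}$ and $f\in\fun{\K}^+$, any such $f$ decomposes uniquely as $f=\sum_{i=1}^\ell \cf_{J_i}$ with the $J_i$ disjoint $\K$-intervals. Ordering $J_1<J_2<\cdots<J_\ell$ in $\K$, the projections $\pi_\K(J_i)$ tile $J$ from left to right and hence satisfy $\pi_\K(J_i)\to\pi_\K(J_{i+1})$ in $S^1_\K$. This gives a bijection between such $f$ and the tuples in the statement, and the definition of $o(f)$ directly identifies $K_{o(f)}$ with the $K$-factor in the formula.

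\textbf{Computing $\gamma_f([k_J])$.} Since $\dim\overline{V}_a\leq 1$ everywhere, there is a unique (up to isomorphism) $S^1_\K$-representation structure on $\overline{V}$ isomorphic to $k_J$. A direct inspection shows that this structure preserves the filtration $\overline{V}_{\geq}^\bullet$ exactly when the integer shifts defining the lifts $J_i$ are non-decreasing, i.e.\ when $J_1<J_2<\cdots$ in $\K$. For such a valid $f$, the associated graded belongs to $\calE_V$ and equals $\bigoplus_{i=1}^\ell k_{J_i}$: the induced map $V_s\to V_t$ is nonzero precisely when the straight segment from $s$ to $t$ in $\K$ stays inside a single lift of $J$, forcing $s,t\in J_i$ for the same $i$. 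Together with the automorphism counts $|\mathsf{Aut}(\bigoplus_i k_{J_i})|=(q-1)^\ell$ and $|\mathsf{Aut}(k_J)|=q-1$, the groupoid pushforward gives
\begin{equation*}
\gamma_f([k_J])=\upsilon^{-h(f)}(q-1)^{\ell-1}\bigl[\bigoplus\nolimits_{i=1}^\ell k_{J_i}\bigr].
\end{equation*}

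\textbf{Assembly and main obstacle.} To conclude, I would rewrite $[\bigoplus_i k_{J_i}]$ as the ordered Hall product $[k_{J_1}]\star\cdots\star[k_{J_\ell}]$; this introduces a twist $\upsilon^{-\sum_{i<j}\langle\cf_{J_i},\cf_{J_j}\rangle}$ which the factor $\upsilon^{-h(f)}$ is designed to absorb, exactly as in the proof of Proposition~\ref{P:fockhall}. Applying $[k_{J_i}]=-\upsilon^{-1/2}F_{J_i}$ brings in $(-\upsilon^{-1/2})^\ell$, and combining with the outer $-\upsilon^{1/2}$ from $F_J=-\upsilon^{1/2}[k_J]$ and the identity $(q-1)^{\ell-1}=\upsilon^{\ell-1}(\upsilon-\upsilon^{-1})^{\ell-1}$ telescopes to the advertised coefficient $\upsilon^{(\ell-1)/2}(\upsilon^{-1}-\upsilon)^{\ell-1}$. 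The bulk of the technical difficulty lies in this $\upsilon$-exponent bookkeeping; a more delicate secondary issue is the treatment of tuples with consecutive $J_i,J_{i+1}$ adjacent in $\K$, where the Hall product also produces the non-split extension $[k_{J_i\cup J_{i+1}}]$. This contribution must be reattributed to the (coarser) tiling where $J_i\cup J_{i+1}$ is a single piece, and verifying that these redistributions are globally consistent with the overall sum over tuples is the most delicate part of the argument.
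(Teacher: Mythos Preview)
Your overall strategy matches the paper's, but there is a genuine gap in the \textbf{Indexing} step, and the ``secondary issue'' you flag at the end is a phantom.

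\textbf{The indexing claim is false.} You assert that for \emph{every} $f\in\fun{\K}^+$ with $\overline{f}=\cf_J$, writing $f=\sum_i\cf_{J_i}$ with $J_1<J_2<\cdots$ forces $\pi_\K(J_1)\to\pi_\K(J_2)\to\cdots$. This is not so. Take $J=[0,\tfrac12)\subset S^1_\K$ and $f=\cf_{[1/4,\,1/2)}+\cf_{[1,\,5/4)}$; then $\overline{f}=\cf_J$ and $J_1=[\tfrac14,\tfrac12)<J_2=[1,\tfrac54)$, but $\pi_\K(J_1)=[\tfrac14,\tfrac12)$ is \emph{not} left adjacent to $\pi_\K(J_2)=[0,\tfrac14)$. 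What actually happens is that for such $f$ one has $\gamma_f([k_J])=0$: the unique $S^1_\K$-structure on $\overline V$ isomorphic to $k_J$ does \emph{not} preserve the filtration $\overline V^\bullet_{\geq}$. (In the example, the map $x_\gamma\colon \overline V_0\to\overline V_{1/4}$ along the short path would have to send $\overline V_{\geq 1,\,0}=V_1$ into $\overline V_{\geq 5/4,\,1/4}=0$, contradicting $x_\gamma\neq 0$.) The paper phrases this as: a filtration of $k_J$ with successive dimension vectors $\overline{\cf_{J_i}}$ exists iff $\pi_\K(J_1)\to\pi_\K(J_2)\to\cdots$, and otherwise $\gamma_f(F_J)=0$. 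Your ``Computing'' paragraph conflates this with the tautological ordering $J_1<J_2<\cdots$ and therefore never actually establishes the vanishing for bad $f$.

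\textbf{The ``adjacent $J_i$'' worry is spurious.} In this paper $I<J$ means a strict gap ($b<c$ for $I=[a,b)$, $J=[c,d)$), so consecutive $J_i,J_{i+1}$ are never adjacent in $\K$. Hence $\langle\cf_{J_i},\cf_{J_j}\rangle=0$ for $i\neq j$, there are no non-split extensions, and $[\bigoplus_i k_{J_i}]=[k_{J_1}]\star\cdots\star[k_{J_\ell}]$ on the nose with no $\upsilon$-twist to absorb. The factor $\upsilon^{-h(f)}$ is \emph{not} there to cancel a Hall-product twist; one computes directly $h(f)=1-\ell$ from the translates $\tau^m(f)$, and this is what produces the exponent $\tfrac{\ell-1}{2}$ after combining with $(q-1)^{\ell-1}$ and the sign from $[k_{J_i}]\mapsto -\upsilon^{-1/2}F_{J_i}$.
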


\begin{proof} 
	Let $f \in \fun{\K}$ be such that $\overline{f}=\cf_J$. We claim that
	\begin{align}
		\gamma_f(F_J)=\upsilon^{\frac{1-\ell}{2}}(\upsilon^{-1}-\upsilon)^{\ell-1}\, F_{J_1} \cdots F_{J_\ell}
	\end{align}
	if there exists $J_1, J_2, \ldots, J_\ell$ such that $f=\cf_{J_1}+ \cdots +\cf_{J_\ell}$, $J_1 < J_2 < \cdots$ and $\pi_\K(J_1) \to   \pi_\K(J_2) \to \cdots$ and $\gamma_f(F_J)=0$ otherwise. Let us write $f=\sum_{i=1}^{\ell} \cf_{J_i}$ with $J_1 < J_2 < \cdots J_\ell$. For $\gamma_f(F_J)$ to be nonzero, there must exist a filtration $M_1 \subset M_2 \subset \cdots \subset M_\ell= k_J$ of the indecomposable $S^1_\K$-module $k_J$ such that $\dim(M_i/M_{i-1})=\overline{\cf_{J_i}}$ for $i=1, \ldots, \ell$. This is possible if and only if  
	$\pi_\K(J_1) \to   \pi_\K(J_2) \to \cdots$ and moreover in that case we have $\bigoplus_i M_i/M_{i-1} \simeq \bigoplus_i k_{J_i}$. The claim now follows from the facts that $|\mathsf{Aut}(k_J)|=q-1$, while $|\mathsf{Aut}(\bigoplus_i I_{J_i})|=(q-1)^\ell$ and $h(f)=1-\ell$, and finally from the easily checked relation $[\bigoplus_i k_{J_i}]= [k_{J_1}] \star \cdots \star [k_{J_\ell}]$. To obtain formula \eqref{E:nezt}, observe that for $i<k$ we have $(\tau^{\ell}(\cf_{J_k}),\cf_{J_i}) =0$ for all $\ell>0$ while
	for $i>k$ there exists a unique $\ell$ such that $(\tau^\ell(\cf_{J_k}),\cf_{J_i}) =1$ and $(\tau^\ell(\cf_{J_k}),\cf_{J_i}) =0$ for all other values of $\ell$.
\end{proof}

\bigskip\section{Fock space representation of the circle quantum group}\label{s:Fockspacecircle}

In this section, we define an action of the circle quantum group on the Fock space $\calF_\K$. 

\subsection{The Fock space $\calF_{\K}$ of $\bfU_\upsilon(\fraksl(S^1_\K))$}\label{ss:Fockspace-circle}

We will now define the main object of the present paper, namely the Fock space representation of the circle quantum group. As a vector space, this Fock space is again
\begin{align}
	\calF_{\K}=\bigoplus_{p \in  \mathsf{Pyr}(\K)}\,  \widetilde \Q \vert p\rangle\ .
\end{align}

For $I$ a $\K$-interval and $p$ a $\K$-pyramid, we set
\begin{align}
	n^>_I(p)=\sum_{m \geq 1} n_{\tau_m(I)}(p), \qquad n^<_I(p)=\sum_{m \geq 1} n_{\tau_{-m}(I)}(p), \qquad \overline{n}_I(p)=\sum_{m \in \Z} n_{\tau_m(I)}(p)\ ,
\end{align}
where $n_J(p)$ is defined as in Theorem~\ref{thm:Fock-R} and where $\tau_m(I)$ is the translation of $I$ by $m\geq 1$ to the right, i.e. if $I=[a,b)$ then $\tau_m(I)=[a+m,b+m)$. If $I, J$ are $\K$-intervals then we write $I < J$ if $I=[a,b), J=[c,d)$ and $b <c$.

\begin{theorem} \label{thm:Fock-S1}
	The following formulas define an action of the quantum group $\bfU_\upsilon(\fraksl(S^1_\K))$ on $\calF_\K$:
	\begin{align}\label{E:thmmain}
		E_J \vert p\rangle&=\sum_{J'_1, \ldots, J'_\ell} \upsilon^{\frac{1-\ell}{2}-\sum_{i} n^<_{J'_i}(p)} (-\upsilon)^{-\sum_i \langle \cf_{J'_i},p\rangle}(\upsilon-\upsilon^{-1})^{\ell-1}
		\vert p-\sum_i \cf_{J'_i}\rangle\ ,\\[4pt]
		\label{E:thmmain2}
		F_J \vert p\rangle&=\sum_{J''_1, \ldots, J''_\ell} \upsilon^{\frac{\ell-1}{2}+\sum_{i} n^>_{J''_i}(p)} (-\upsilon)^{\sum_i \langle \cf_{J''_i},p\rangle}(\upsilon^{-1}-\upsilon)^{\ell-1}
		\vert p+\sum_i \cf_{J''_i}\rangle\ ,\\[4pt]
		K_J \vert p \rangle&= \upsilon^{\overline{n}_{J}(p)}\vert p \rangle\ ,
	\end{align}
	where the sums range over all tuples of removable $\K$-intervals $(J'_1, \ldots, J'_l)$ (resp.\ all tuples of addable $\K$-intervals $(J''_1, \ldots, J''_l)$) satisfying the conditions
	\begin{itemize}\itemsep0.2cm
		\item[a')] $J'_1 > J'_2 > \cdots > J'_\ell$,
		\item[b')] $\pi_\K(J'_1) \to \pi_\K(J'_2) \to \cdots \to \pi_\K(J'_\ell)$,
		\item[c')] $\pi_{\K}(J'_1) \sqcup \cdots \sqcup \pi_\K(J'_\ell)=J$.
	\end{itemize}
	(resp.\
	\begin{itemize}\itemsep0.2cm
		\item[a'')] $J''_1 < J''_2 < \cdots < J''_\ell$,
		\item[b'')] $\pi_\K(J''_1) \to \pi_\K(J''_2) \to \cdots \to \pi_\K(J''_\ell)$,
		\item[c'')] $\pi_{\K}(J''_1) \sqcup \cdots \sqcup \pi_\K(J''_\ell)=J$.)
	\end{itemize}
	
\end{theorem}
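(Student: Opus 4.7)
The plan is to define the action of $\bfU_\upsilon(\fraksl(S^1_\K))$ on $\calF_\K$ through the folding procedure developed in Section~\ref{s:Hallalgebra}, transporting the $\bfU_\upsilon(\fraksl(\K))$-action of Theorem~\ref{thm:Fock-R} along the algebra homomorphism $r\colon \bfH_{S^1_\K}\to \bfU_\upsilon^{\leq 0}(\fraksl(\K))^{\mathsf{c}}$ and its symmetric counterpart $r^+\colon \bfU_\upsilon^+(\fraksl(S^1_\K))\to \bfU_\upsilon^{\geq 0}(\fraksl(\K))^{\mathsf{c}}$ (constructed identically to $r$ with opposite filtrations, or equivalently via the anti-involution $E_J\leftrightarrow -F_J$).

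First I would verify that the formulas \eqref{E:thmmain2} and \eqref{E:thmmain} coincide with the actions of $r(F_J)$ and $r^+(E_J)$ on $\vert p\rangle$. This is a direct check: substituting the explicit expression of $r(F_J)$ from the last lemma of Section~\ref{s:Hallalgebra} into the formulas of Theorem~\ref{thm:Fock-R}, the $\upsilon^{\sum_i n^>_{J''_i}(p)}$-factor comes from the product of $K$'s, while the $(-\upsilon)^{\sum_i\langle \cf_{J''_i},p\rangle}$-factor arises from the composition of the $F_{J''_i}$'s; the key simplification is that successive intervals $J''_i < J''_j$ with disjoint closures satisfy $\langle\cf_{J''_i},\cf_{J''_j}\rangle=0$, so no cross-terms are generated by the intermediate pyramids. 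The Cartan formula $K_J\vert p\rangle=\upsilon^{\overline{n}_J(p)}\vert p\rangle$ corresponds to the action of $\prod_{I\in\Int(J)}K_I$ via Lemma~\ref{lem:K}. Once these matchings are established, the join, nest and internal Drinfeld--Jimbo relations within $\bfU^\pm_\upsilon(\fraksl(S^1_\K))$ hold automatically, since $r$ and $r^+$ are algebra homomorphisms (Corollary to Proposition~\ref{P:fockhall}) and $\calF_\K$ is a module over $\bfU^{\leq 0}_\upsilon(\fraksl(\K))^{\mathsf{c}}$ and $\bfU^{\geq 0}_\upsilon(\fraksl(\K))^{\mathsf{c}}$. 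The Cartan--Cartan, Cartan--$E$ and Cartan--$F$ relations follow from the elementary identity $(\cf_I,\cf_J)_{S^1_\K}=\sum_{m\in\Z}(\cf_{\tilde I},\cf_{\tau_m(\tilde J)})_\K$ for lifts, together with $\sum_i\cf_{\pi_\K(J''_i)}=\cf_J$. The disjoint commutator relation $[E_{J_1},F_{J_2}]=0$ for $J_1\cap J_2=\emptyset$ reduces to the line-level relation, since any two lifts of sub-intervals of $J_1$ and $J_2$ are necessarily disjoint on $\K$.

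The main obstacle is the mixed Drinfeld--Jimbo relation $[E_J,F_J]=(K_J-K_J^{-1})/(\upsilon-\upsilon^{-1})$, since $E_J$ and $F_J$ come from \emph{different} folding homomorphisms and the cross-commutator expands into a double sum over tuples of addable and removable lifts whose cancellation is not formal. My preferred route is a reduction to the classical affine case: for fixed $\vert p\rangle$ and fixed $J$, the sub-intervals appearing in \eqref{E:thmmain}--\eqref{E:thmmain2} have endpoints in $D_\K(p)\cup\partial\tilde J$, which projects to a finite set $S\subset S^1_\K$. Restricting attention to the subalgebra $\bfU_\upsilon(\fraksl(S^1_S))$ and to the subspace of $\calF_\K$ spanned by pyramids with discontinuities lifting points of $S$, one obtains via the circle analogue of Section~\ref{ss:gen} a copy of $\bfU_\upsilon(\widehat{\fraksl}(n))$ with $n=|S|$ acting on (a rescaling of) its standard level-one Fock space, where the mixed relation is Hayashi's theorem. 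The delicate step in this reduction is the matching of normalization constants: the $(-\upsilon)^{\langle\cf_J,p\rangle}$-signs and $\upsilon^{1/2}$-prefactors in our formulas, inherited from Theorem~\ref{thm:Fock-R}, differ from Hayashi's conventions, so we must verify that this discrepancy amounts to a global automorphism of the $\bfU_\upsilon(\widehat{\fraksl}(n))$-module structure preserving the $[E,F]$-relation. This is the same normalization bookkeeping already implicit in passing from Proposition~\ref{P:Fockfinite} to Proposition~\ref{P:fockhall}, and can be handled by a careful tracking of constants.
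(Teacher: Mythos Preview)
Your approach is essentially the paper's: the join and nest relations are obtained by transporting the $\bfU_\upsilon(\fraksl(\K))$-action through the folding homomorphism $r$ (and its $E$-side counterpart), and the mixed Drinfeld--Jimbo relation is handled by reduction to a finite lattice $\alpha(\Z)\subset\K$, hence to $\bfU_\upsilon(\widehat{\fraksl}(N))$.

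Two points are worth flagging. First, you omit the well-definedness step: one must check that the sums in \eqref{E:thmmain}--\eqref{E:thmmain2} are finite for each fixed $p$. This is not automatic, since the indexing set of tuples $(J''_1,\ldots,J''_\ell)$ is a priori infinite; the paper argues that the intermediate endpoints $c_1,\ldots,c_{\ell-1}$ of the subdivision of $J$ must project from $D_\K(p)$, whence finiteness. Second, for the relation $[E_J,F_J]=(K_J-K_J^{-1})/(\upsilon-\upsilon^{-1})$ after reduction to $\K=\tfrac{1}{N}\Z$, the paper does not appeal to Hayashi's theorem plus a normalization comparison as you propose; instead it uses the join relations to reduce further to simple-root (length $\tfrac{1}{N}$) intervals and carries out the combinatorial identity directly. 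This sidesteps entirely the normalization bookkeeping you correctly identify as delicate: the explicit computation of $\langle p\,|\,[E_J,F_J]\,p\rangle$ as a signed sum over addable and removable intervals congruent to $J$, equated with $[\,|A|-|R|\,]$, is self-contained and does not depend on matching conventions with an external source.
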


\begin{proof}
	The proof of this theorem occupies the rest of the Section.
	
	\subsubsection{Well-definedness}\label{ss:well-def}
	
	Let us first check that $E_J$ and $F_J$ are well-defined, i.e. that the sums involved in their definition are in fact finite. We treat the case of the operators $F_J$, the case of $E_J$ being similar. 
	
	Let us fix a $\K$-pyramid $p$. By definition, a collection of $\K$-intervals $J''_1, J''_2, \ldots, J''_\ell$ satisfying the conditions (a''), (b''), (c'') in Theorem~\ref{thm:Fock-S1} induce a subdivision $J_1 \to J_2 \to \cdots \to J_\ell$ of $J$, with $J_i=\pi_\K(J''_i)$. For any given interval $I \subset S^1_\K$ and any pyramid $q$, there are at most finitely many $I'' \in \Int(I)$ which are addable to $q$ (because $q$ is of compact support). Hence it is enough to prove that only finitely many subdivisions $J_1 \to J_2 \to \cdots \to J_\ell$ of $J$ may give rise to a collection of addable intervals $(J''_1, \ldots, J''_\ell)$. 
	
	Write $J=[a,b)$ and $J_1=[c_0=a, c_1), J_2=[c_1,c_2), \ldots, J_\ell=[c_{\ell-1},c_\ell=b)$. We claim that if there exists addable $\K$-intervals $J''_1, \ldots, J''_\ell$ such that $J''_i \in \Int(J_i)$ for all $i$ and $J''_1 < J''_2 < \cdots < J''_\ell$ then necessarily $c_1, c_2, \ldots, c_{\ell-1} \in \pi_\K(D_\K(p))$. Indeed, writing $J''_i=[a''_i,b''_i)$ for $i=1, \ldots, \ell$ (so that $\pi_\K(a''_i)=c_{i-1}, \pi_\K(b''_i)=c_i$) we have by Remark~\ref{rem:action-nonzero}
	\begin{align}
	&b''_i \in D_\K(p)\quad \text{if $b''_i<0$ and $i \neq \ell$} \ ,\\[2pt]
	&a''_i \in D_\K(p)\cup \{0\}\quad \text{if $a''_i\geq0$ and $i \neq 1$}\ .
	\end{align}
	Since $D_\K(p)$ is finite, this implies the desired finiteness of possible tuples $(J_1, \ldots, J_\ell)$. The argument for $E_J$ is similar, using the \emph{reverse} condition $J'_1> J'_2 > \cdots > J'_\ell$.
	
	\subsubsection{Join and nest relations}
	
	Using Formula~\eqref{E:nezt} and Theorem~\ref{thm:Fock-R} we deduce that the operators \eqref{E:thmmain2} indeed define an action of $\bfU^+_\upsilon(\fraksl(S^1_\K))$ as wanted. To prove that the operators \eqref{E:thmmain} define an action of $\bfU^-_\upsilon(\fraksl(S^1_\K))$, one may argue in a similar fashion --- using Hall algebra of the opposite quivers with $\upsilon^{-2}=q$ --- and show that the assignment  
	\begin{align}
		E_J \mapsto \sum_{J_1, \ldots, J_\ell}\, \upsilon^{\frac{1-\ell}{2}}(\upsilon-\upsilon^{-1})^{\ell-1}\, E_{J_1} \cdots E_{J_\ell} \prod_{i, \ell >0} K^{-1}_{\cf_{J_i-\ell}}
	\end{align}
	where the sum ranges over all tuples $J_1, \ldots, J_\ell$ of intervals such that $\cf_J=\pi(\cf_{J_1}+ \cdots +\cf_{J_\ell})$, $J_1 > J_2 > \cdots$ and $\pi_\K(J_1) \to   \pi_\K(J_2) \to \cdots$, defines an algebra morphism $\bfU^-_\upsilon(\fraksl(S^1_\K)) \to \bfU^-_\upsilon(\fraksl(\K))^{\mathsf{c}}$.
	
	This argument proves the join and nest relations for the $E_J$'s and the $F_J$'s.
	
	\subsubsection{The Drinfeld-Jimbo relations}
	
	It remains to check that the actions of $\bfU_\upsilon^{\pm}(\fraksl(S^1_\K))$ and of $\bfU_\upsilon^{0}(\fraksl(S^1_\K))$ glue together to form an action of $\bfU_\upsilon(\fraksl(S^1_\K))$. Each of the relations (\ref{eq:DJ-1} -- \ref{eq:DJ-5}) involves finitely many generators $E_J, F_J, K_J$ acting on a pyramid $p$. 
	
	Let $\alpha \colon \Z \to \R$ be a strictly increasing map, whose image is invariant under integer translations, contains $0$ as well as $D_\K(p)$ and the endpoints of all involved intervals $J$. Put $N=\# (\alpha^{-1}([0,1)))$. Arguing as in Section~\ref{ss:gen} we construct natural isomorphism $\bfU_\upsilon(\fraksl(S^1_{\frac{1}{N}\Z})) \simeq \bfU_{\upsilon}(\fraksl(S^1_{\alpha(\Z)}))$ and $\calF_{\frac{1}{N}\Z} \simeq \calF_{\alpha(\Z)}$ compatible with the formulas in Theorem~\ref{thm:Fock-S1}. Thus it is enough to prove the theorem in the case $\K=\frac{1}{N}\Z$, which we assume from now on. 
	
	Relations \eqref{eq:DJ-1}, \eqref{eq:DJ-2}, \eqref{eq:DJ-3} are obvious. Using the join relations, we may reduce relations \eqref{eq:DJ-4}, \eqref{eq:DJ-5} to the cases where $J_1, J_2$, resp.\ $J$ are of length $\frac{1}{N}$; this is clear for \eqref{eq:DJ-1} and results from a lengthy but straightforward computation for \eqref{eq:DJ-2}. In the case of length $\frac{1}{N}$-intervals (i.e. of simple roots of $\widehat{\fraksl}(N)$) these relations are well-known; we nevertheless derive them below for the reader's comfort: let $p$ be a $\frac{1}{N}\Z$-pyramid, and let $J=[i,i+\frac{1}{N}[$ for some $i \in \frac{1}{N}\Z /\Z$. Let $p'$ be another $\frac{1}{N}\Z$ pyramid. By construction, we have $\langle p' \;|\; E_JF_J p \rangle =\langle p' \;|\; F_JE_J p \rangle =0 $ unless $p'=p-\cf_{J'} + \cf_{J''}$ for a pair of intervals $J,J'$ such that $\pi(J')=\pi(J')=J$. Let us first show that if $p \neq p'$ then $\langle p'\;|\; F_JE_J p\rangle=\langle p'\;|\; E_JF_J p\rangle$. In that situation, $J'$ and $J''$ are unique and we have
	\begin{align}
		\langle p'\;|\;E_JF_J p\rangle &= \langle p'\;|\; E_J (p+\cf_{J''})\rangle \langle p+\cf_{J''}\;|\; F_J p\rangle= \upsilon^{n^>_{J''}(p)-n^<_{J'}(p+\cf_{J''})}\ ,\\[4pt]
		\langle p'\;|\;F_JE_J p\rangle & = \langle p'\;|\; F_J (p-\cf_{J'})\rangle \langle p-\cf_{J'}\;|\; E_J p\rangle= \upsilon^{n^>_{J''}(p-\cf_{J'})-n^<_{J'}(p)}\ . 
	\end{align}
	
	There are two possibilities: $J'' > J'$ or $J''< J'$. In the first one we have $n^>_{J''}(p)=n^>_{J''}(p-\cf_{J'})$ and $n^<_{J'}(p+\cf_{J''})=n^<_{J'}(p)$, while in the second one we have $n^>_{J''}(p)=n^>_{J''}(p-\cf_{J'})+1$ and $n^<_{J'}(p+\cf_{J''})=n^<_{J'}(p)-1$. In both cases the desired equality follows. 
	
	Thus only when $p=p'$ may we have a contribution to the commutator $[E_J,F_J]$. Let $I_1< I_2< \cdots < I_c$ be the different addable or removable intervals of $p$ which are congruent to $J$. We may partition $[1, c]=A \sqcup R$ with $A=\{i\;|\; I_i\;\text{is addable}\}$ and $R=\{i\;|\; I_i\;\text{is removable}\}$. Let us also write 
	\begin{align}
		a_{>h}=| A \cap [h+1,n] |\ , \; r_{>h}=| R \cap [h+1,n]|\ , \; a_{<h}=| A \cap [1,h-1] |\ , \; r_{<h}=| R \cap [1,h-1]|\ .
	\end{align}
	
	The contribution of the interval $I_h$ to $\langle p\;|\; [E_J,F_J] p\rangle$ is equal to
	\begin{align}
		\langle p\;|\; E_J (p+\cf_{I_h})\rangle \langle p+\cf_{I_h}\;|\; F_J p\rangle= \upsilon^{(a_{>h}-r_{>h})-(a_{<h}-r_{<h})}
	\end{align}
	if $h \in A$, while it is
	\begin{align}
		-\langle p\;|\; F_J (p-\cf_{I_h})\rangle \langle p-\cf_{I_h}\;|\; E_J p\rangle=-\upsilon^{(a_{>h}-r_{>h})-(a_{<h}-r_{<h})}
	\end{align}
	if $h \in R$. All together we get
	\begin{align}\label{E:DJMR2}
		\langle p\;|\; [E_J,F_J] p\rangle = \sum_{h \in A} \upsilon^{(a_{>h}-r_{>h})-(a_{<h}-r_{<h})}-\sum_{h \in R} \upsilon^{(a_{>h}-r_{>h})-(a_{<h}-r_{<h})}\ .
	\end{align}
	It thus only remains to check that the r.h.s. of relation~\eqref{E:DJMR2} coincides with 
	\begin{align}
	\langle p\;|\; (K_J-K_J^{-1})p\rangle /(\upsilon-\upsilon^{-1})=(\upsilon^{|A|-|R|}-\upsilon^{|R|-|H|})/(\upsilon-\upsilon^{-1})\ .
	\end{align}
	This is a purely combinatorial statement, which may be proved as follows. We first check it when $R=[1, u], A=[u+1,n]$, and then we prove that the r.h.s. of relation~\eqref{E:DJMR2} remains unchanged when we exchange the position of a pair of adjacent elements, one which belongs to $A$ and the other to $R$. We leave the details to the reader.
	
\end{proof}

\subsection{Non-cyclicity}

In the finite setup (i.e., for Hayashi's Fock space of $\bfU_{\upsilon}(\widehat{\fraksl}(n))$) the Fock space is not cyclic. The same holds here:

\begin{proposition} 
	The subspace $\bfU_{\upsilon}(\fraksl(S^1_{\K})) \cdot \vert 0 \rangle$ of $\calF_{{\K}}$ is strict.
\end{proposition}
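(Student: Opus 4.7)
I plan to exhibit the basis vector $\vert \cf_{[0,1)}\rangle \in \calF_\K$ and show that it does not belong to $\bfU_\upsilon(\fraksl(S^1_\K))\cdot\vert 0\rangle$. This will be achieved by constructing a linear functional $\phi$ on the size-$1$ part of $\calF_\K$ that vanishes on the size-$1$ part of the orbit but takes the value $1$ on $\vert\cf_{[0,1)}\rangle$. Since no interval is removable from the empty pyramid, $E_J\vert 0\rangle=0$ for every strict interval $J$ of $S^1_\K$, while Lemma~\ref{lem:K} yields $K_J\vert 0\rangle=\upsilon\vert 0\rangle$. Consequently $\bfU_\upsilon(\fraksl(S^1_\K))\cdot\vert 0\rangle=\bfU_\upsilon^-(\fraksl(S^1_\K))\cdot\vert 0\rangle$, and a size-$1$ element of the orbit is a linear combination of products $F_{J_k}\cdots F_{J_1}\vert 0\rangle$ in which the family $\{J_1,\ldots,J_k\}$ is a partition of $S^1_\K$ into strict intervals.

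I will then analyse such an $F$-word step by step via formula~\eqref{E:thmmain2}. The key claim is: (i) at each intermediate step the current state is a single-layer pyramid $\vert\cf_I\rangle$ with $|I|<1$, and exactly one addable lift of the next interval contributes, namely the one adjacent to $I$ in $\K$; and (ii) at the final step, when the total size reaches $1$, the action admits two addable lifts, namely the right-adjacent $J''_1=[b_I,b_I+|J_k|)$ and the left-adjacent ``wrap-around'' $J''_{-1}=[a_I-|J_k|,a_I)$, yielding two distinct single-layer length-$1$ pyramids whose coefficients, computed directly from~\eqref{E:thmmain2} using $n^>_{J''_1}(p)=0$, $\langle\cf_{J''_1},p\rangle=0$ and $n^>_{J''_{-1}}(p)=1$, $\langle\cf_{J''_{-1}},p\rangle=-1$, are respectively $1$ and $-1$. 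Hence every size-$1$ $F$-word applied to $\vert 0\rangle$ is a scalar multiple of $\vert\cf_{I'}\rangle-\vert\cf_{I''}\rangle$ for some pair of single-layer length-$1$ pyramids.

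I then define $\phi\colon \bigoplus_{s\in[0,1)\cap\K}\widetilde{\Q}\,\vert\cf_{[-s,1-s)}\rangle\to\widetilde{\Q}$ by $\phi(\vert\cf_{[-s,1-s)}\rangle)=1$ for every $s$, extended linearly. By the preceding paragraph $\phi$ annihilates every size-$1$ $F$-word applied to $\vert 0\rangle$, and therefore the entire size-$1$ part of the orbit. Since $\phi(\vert\cf_{[0,1)}\rangle)=1\neq 0$, the vector $\vert\cf_{[0,1)}\rangle$ does not belong to $\bfU_\upsilon(\fraksl(S^1_\K))\cdot\vert 0\rangle$, proving that the submodule is strict.

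The main obstacle lies in verifying claim~(i): that only one addable lift of the next interval contributes at each intermediate step. This will follow from the fact that any addable lift $J''$ of the next interval must satisfy $\pi(J'')=J_{\sigma(i+1)}$, which is disjoint from $\pi(I)=J_{\sigma(1)}\cup\cdots\cup J_{\sigma(i)}$, together with Remark~\ref{rem:action-nonzero}: these conditions force $J''$ to be adjacent to $I$ on one of the two sides in $\K$, and only one side can actually match the prescribed $\pi$-image as long as $|I|+|J_{\sigma(i+1)}|<1$. Claim~(ii) is then a routine application of formula~\eqref{E:thmmain2} entirely analogous to the calculations performed in the proof of the Drinfeld--Jimbo relations in Section~\ref{ss:well-def}.
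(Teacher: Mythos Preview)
Your argument is correct and takes a genuinely different route from the paper's.  The paper argues by contradiction: assuming $\vert \cf_{[0,1[}\rangle$ lies in the orbit, it chooses a finite set $\overline{\alpha}\subset S^1_\K$ containing the endpoints of all intervals in a hypothetical expression, thereby reducing to the analogous statement for $\bfU_\upsilon(\widehat{\fraksl}(N))$ acting on Hayashi's Fock space, which is then cited (with the final check ``left to the reader'').  Your approach, by contrast, is a direct and self-contained computation: you show that every $F$-word of $\fun{S^1_\K}$-degree $\cf_{S^1}$ applied to $\vert 0\rangle$ is a scalar multiple of a difference $\vert\cf_{I'}\rangle-\vert\cf_{I''}\rangle$ of single-layer length-one pyramids, and then kill all such differences with the linear functional $\phi$ that sends every $\vert\cf_{[-s,1-s)}\rangle$ to $1$.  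This is more elementary and more transparent---it exhibits explicitly the obstruction rather than deferring to the finite theory---at the cost of a longer case analysis.

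Two small points are worth tightening.  First, in claim~(i) you write ``exactly one addable lift'', but when $J_{i+1}$ is adjacent to neither end of $\pi(I)$ in $S^1_\K$ there are in fact \emph{zero} addable lifts and the $F$-word vanishes; this does not affect the conclusion (since $\phi(0)=0$), but the phrasing should be ``at most one''.  Second, the reduction $\bfU_\upsilon(\fraksl(S^1_\K))\cdot\vert 0\rangle=\bfU_\upsilon^-(\fraksl(S^1_\K))\cdot\vert 0\rangle$ rests on a triangular-decomposition argument (commuting $E$'s and $K$'s past $F$'s until they annihilate $\vert 0\rangle$) that is standard but not stated in the paper; you may want to spell out that the join relations let you reduce any $[E_{J_1},F_{J_2}]$ to the cases $J_1=J_2$ or $J_1\cap J_2=\emptyset$ covered by \eqref{eq:DJ-4} and \eqref{eq:DJ-5}.
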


\begin{proof}
	We claim that the element $\vert \cf_{[0,1[}\rangle$ does not belong to $\bfU_{\upsilon}(\fraksl(S^1_{\K})) \cdot \vert 0 \rangle$. Let us argue by contradiction. Let $u \coloneqq P(F_{J_1}, \ldots, F_{J_s})$ be a linear combination of monomials in generators $F_{J_1}, \ldots, F_{J_s}$ such that $u \cdot \vert 0 \rangle = \cf_{[0,1[} $. Choose a finite subset $\overline{\alpha} \subset S^1_{\K}$ containing all the endpoints of the intervals $J_i$ and let $\alpha\colon \Z \to \K$ be such that $\pi_{\K} (\alpha(\Z))=\overline{\alpha}$. There is a canonical embedding $\bfU_{\upsilon}(\fraksl(S^1_{\alpha(\Z)})) \to \bfU_{\upsilon}(\fraksl(S^1_{\K}))$ whose image contains $u$. Moreover, $\bfU_{\upsilon}(\fraksl(S^1_{\alpha(\Z)}))$ is isomorphic to $\bfU_{\upsilon}(\widehat{\fraksl}(N))$, where $N=|\overline{\alpha}|$, and the restriction of $\calF_{\K}$ to $\bfU_{\upsilon}(\fraksl(S^1_{\alpha(\Z)}))$ contains the Fock space $\calF_{\alpha(\Z)}$ as the subspace spanned by all pyramids $p$ satisfying $D_\K(p) \in \alpha(\Z)$. Hence it is enough to check that $| \cf_{[0,1[}\rangle \not\in \bfU_{\upsilon}(\fraksl(S^1_{\alpha(\Z)})) \cdot |0\rangle$ or equivalently that the element $|\lambda\rangle$ with $\lambda=(N)$  does not belong to the subspace $\bfU_{\upsilon}(\widehat{\fraksl}(N))\cdot |0\rangle$ in Hayashi's Fock space. This last statement may be checked by some simple direct calculations which we leave to the reader.
\end{proof}

\begin{remark}
	The Fock space is a cyclic representation of the Hall algebra $\bfH_{S^1_\K}$. This follows by reduction to the case of cyclic quivers which is treated in \cite{art:varagnolovasserot1999}.
\end{remark}

\subsection{Highest weight vectors}

The vacuum vector $\vert 0 \rangle$ is an obvious highest weight vector. Somewhat surprisingly, it turns out to be the \emph{only} highest weight vector in $\calF_{\K}$ when $\K=\Q$ or $\K=\R$.

\begin{proposition} 
	Assume that $\K=\Q$ or $\K=\R$. We have
	\begin{align}
	(\calF_{\K})^{\bfU^+_{\upsilon}(\fraksl(S^1_{\K}))}=\widetilde{\Q}\vert 0 \rangle\ .
	\end{align}
\end{proposition}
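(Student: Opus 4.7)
I would argue by contradiction. Suppose $v \in \calF_\K \setminus \widetilde{\Q}\vert 0 \rangle$ is a highest-weight vector. Because the action preserves the $\fun{S^1_\K}$-weight decomposition, I may assume $v = \sum_{p \in P} c_p \vert p \rangle$ has a single weight $f \neq 0$ with $P$ finite and every $c_p \neq 0$. Put $S = \bigcup_{p \in P} D_\K(p)$ (finite), $\alpha(\Z) := S + \Z$ (locally finite in $\K$), $R := \max S > 0$, and $P_0 := \{p \in P : R \in D_\K(p)\}$ (non-empty). For sufficiently small $\epsilon > 0$, every $p \in P_0$ satisfies $p \equiv 1$ on $[R-\epsilon, R)$, every $p \in P \setminus P_0$ vanishes there, and the open arc $(\pi_\K(R)-\epsilon,\pi_\K(R))\subset S^1_\K$ meets $\pi_\K(\alpha(\Z))$ only at $\pi_\K(R)$.

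The hypothesis $\K \in \{\Q, \R\}$ enters precisely here: since $\alpha(\Z)$ is locally finite and $\K$ is dense in $\R$, I pick $x_0 \in (R-\epsilon,R) \cap \K$ with $x_0 \notin \alpha(\Z)$, and set $I := [x_0,R)$ and $J := \pi_\K(I) \subset S^1_\K$, a strict interval. Fix any $p_0 \in P_0$ and expand $E_J v$ via Theorem~\ref{thm:Fock-S1}. The term with $\ell = 1$ and $J'_1 = I$ acting on $\vert p_0 \rangle$ yields the non-zero scalar $c_{p_0}\,\upsilon^{-n^<_I(p_0)}(-\upsilon)^{-\langle \cf_I, p_0\rangle}$ as the coefficient of $\vert p_0 - \cf_I\rangle$. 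I claim that every other term in $E_J v$ produces either a different basis vector or a zero contribution, which then forces $E_J v \neq 0$ and yields the desired contradiction.

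To establish this claim, observe that a contribution from $\vert p \rangle$ via a tuple $(J'_1, \ldots, J'_\ell)$ matches $\vert p_0 - \cf_I\rangle$ precisely when $g := \sum_i \cf_{J'_i} = p - p_0 + \cf_I$, whose discontinuities lie in $\alpha(\Z) \cup \{x_0\}$. Every endpoint of a $J'_i$ is a $\K$-lift either of the endpoints $\pi_\K(x_0), \pi_\K(R)$ of $J$, or of an interior split point $a \in (\pi_\K(x_0), \pi_\K(R))$. By our arc constraint $a \notin \pi_\K(\alpha(\Z))$, so every lift of $a$ lies outside $\alpha(\Z) \cup \{x_0\}$; combined with the strict $\K$-ordering $J'_1 > \cdots > J'_\ell$ and the arrow condition on projections, this forbids the pairwise cancellation of such endpoints in $g$ and rules out every $\ell \geq 2$ configuration. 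For $\ell = 1$, $J'_1 = [x_0+n, R+n)$ requires $x_0+n \in \alpha(\Z)\cup\{x_0\}$, which forces $n = 0$ (since $x_0 \notin \alpha(\Z)$), hence $J'_1 = I$ and thus $p = p_0$.

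The main subtlety I anticipate is the rigorous verification of the "no cancellation" step in the $\ell \geq 2$ case: one must track how an interior split point's lifts, appearing as right-endpoint of one $J'_i$ and left-endpoint of $J'_{i+1}$, can only coincide when the integer shifts agree, while the $>$-ordering simultaneously demands those shifts to differ. This tension is precisely where the density of $\K$ in $\R$ is essentially used, and correspondingly why the analogous statement fails for $\K = \Z$.
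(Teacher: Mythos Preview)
Your argument is correct and follows essentially the same strategy as the paper: pick a tiny interval $I$ ending at the rightmost discontinuity $R=\max\bigcup_p D_\K(p)$, project it to $J\subset S^1_\K$, and show that $E_J v$ has a nonzero coefficient on $\vert p_0-\cf_I\rangle$ coming from a unique contribution. The paper states this uniqueness tersely (``follows from condition~(a') and $x-\epsilon\notin D_\K(q)$''), whereas you spell out the mechanism by tracking the discontinuity set of $g=p-p_0+\cf_I$ against the endpoints of the $J'_i$. Your one genuine refinement is requiring $x_0\notin\alpha(\Z)=S+\Z$ rather than merely $x_0\notin S$; this stronger choice is exactly what lets you kill the $\ell\geq 2$ and the $n\neq 0$ translate cases by a clean discontinuity-counting argument, and it is also where the density of $\K$ is visibly consumed. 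The paper's weaker choice of $\epsilon$ still works, but the justification is more implicit.
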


\begin{proof} 
	Let $v\coloneqq \sum_{p} \alpha_p \vert p \rangle$ be a highest weight vector. By homogeneity we may assume that all $p$ for which $\alpha_p \neq 0$ are of the same size $s$. Assume that $s\neq 0$ (i.e., that $v \not\in \widetilde{\Q}\vert 0 \rangle$). Because $\alpha_p$ is nonzero for only finitely many pyramids $p$, the set
	\begin{align}
	D \coloneqq \bigcup_{p,\, \alpha_p \neq 0} D_\K(p)
	\end{align}
	is nonempty and finite. Set $x=\max (D)$. Choose some small $\epsilon >0$ such that
	$[x-\epsilon,x] \cap D=\{x\}$. Let $p$ be a pyramid for which $\alpha_p \neq 0$ and $x \in D_\K(p)$ (hence the support of $p$ is an interval of the form $[a,x]$). Set $J=[x-\epsilon,x)$. By construction, $J$ is removable from $p$. Moreover $p$ is the only pyramid which can contribute to $p-\cf_{J}$ in $E_{J} \cdot v$; this follows from condition (a') in the definition of the action of $E_J$ (cf.\ Theorem~\ref{thm:Fock-S1}) and from the fact that $x-\epsilon \not\in D_\K(q)$ for any pyramid $q$ occuring in $v$. We deduce that $\langle p-\cf_{J} \vert E_{J} \cdot v\rangle=\langle p-\cf_{J} \vert E_{J} \cdot p\rangle \neq 0$, which is in contradiction with the assumption on $v$.
\end{proof}

\begin{remark} 
	Hayashi's Fock space decomposes into a direct sum of highest weight representations. In \cite{SIMRN}, these highest weight vectors are obtained by the action of the center of the Hall algebra on the vacuum vector. In the setting of the continuum quivers $\Q/\Z$ or $\R/\Z$ such a center only appears in a suitable completion (see \cite{art:salaschiffmann2017}). This suggests that a better object to study would be a similar completion of our Fock space. We hope to return to this in the future.
\end{remark}

\bigskip

\providecommand{\bysame}{\leavevmode\hbox to3em{\hrulefill}\thinspace}
\providecommand{\MR}{\relax\ifhmode\unskip\space\fi MR }
\providecommand{\MRhref}[2]{%
	\href{http://www.ams.org/mathscinet-getitem?mr=#1}{#2}
}
\providecommand{\href}[2]{#2}

\end{document}